\patchcmd\maketitle{\def\@makefnmark{\rlap{\@textsuperscript{\normalfont\@thefnmark}}}}{}{}{}
\def\thanksAAffil#1{
  \footnotemarkAAffil\protected@xdef\@thanks{\@thanks%
        \protect\footnotetextAAffil[\the \c@footnoteAAffil]{#1}}%
}
\def\thanksANote#1{%
  \footnotemarkANote%
  \protected@xdef\@thanks{\@thanks%
        \protect\footnotetextANote[\the \c@footnoteANote]{#1}}%
}
\newtheorem{theorem}{Theorem}[section]
\newtheorem{lemma}[theorem]{Lemma}
\newtheorem{definition}[theorem]{Definition}
\newtheorem{proposition}[theorem]{Proposition}
\newtheorem{remark}[theorem]{Remark}
\newtheorem{notation}[theorem]{Notation}
\newtheorem{maintheorem}{Theorem}
\newtheorem{condition}{Condition}
\newtheorem{thmx}{Theorem}
\newtheorem{hypx}[thmx]{Hypothesis}
\numberwithin{equation}{section}
\newcommand{\R}{\mathbb{R}}
\newcommand{\N}{\mathbb{N}}
\newcommand\JUMP[1]{\mathchoice
                  {\big[\hspace*{-.3em}\big[#1\big]\hspace*{-.3em}\big]}
                   {[\hspace*{-.15em}[#1]\hspace*{-.15em}]}
                   {[\![#1]\!]}
                   {[\![#1]\!]}}
\newcommand{\GC}{\Gamma_{\!\scriptscriptstyle{\rm C}}}
\newcommand{\gC}{\gamma_{\!\scriptscriptstyle{\rm C}}}
\newcommand{\op}{\omega_+}
\newcommand{\om}{\omega_-}
\newcommand{\GD}{\Gamma_{\!\scriptscriptstyle{\rm D}}}
\newcommand{\gD}{\gamma_{\!\scriptscriptstyle{\rm D}}}
\newcommand{\GN}{\Gamma_{\!\scriptscriptstyle{\rm N}}}
\newcommand{\barGC}{\overline{\Gamma}_{\!\scriptscriptstyle{\rm C}}}
\newcommand{\barGD}{\overline{\Gamma}_{\!\scriptscriptstyle{\rm D}}}
\newcommand\calE{\mathcal E}
\newcommand\calR{\mathcal R}
\newcommand\calG{\mathcal G}
\newcommand\calK{\mathcal K}
\newcommand\calH{\mathcal H}
\newcommand\calV{\mathcal V}
\newcommand\calJ{\mathcal J}
\newcommand\pl\partial
\newcommand{\Var}{\mathrm{Var}}
\newcommand\bbC{\mathbb C}
\newcommand\bbA{\mathbb A}
\newcommand\bbM{\mathbb M}
\newcommand\bbD{\mathbb D}
\newcommand{\surf}{\mathrm{surf}}
\newcommand{\bulk}{\mathrm{bulk}}
\newcommand{\romanC}{\mathrm{C}}
\newcommand{\SBV}{\mathrm{SBV}}
\newcommand{\Omegaone}{\Omega_+}
\newcommand{\Omegatwo}{\Omega_-}
\newcommand{\barOmegaone}{\overline{\Omega}_+}
\newcommand{\barOmegatwo}{\overline{\Omega}_-}
\newcounter{myfigure}
\newenvironment{my-picture}[3]{\refstepcounter{myfigure}\label{#3}\setlength{\unitlength}{\textwidth}\begin{picture}(#1,#2)}{\end{picture}}
\newcommand{\Surf}{\mathcal{H}^{2}}
\newcommand{\Spu}{H^1_{\GD}(\Omega{\setminus}\GC;\R^3)} 
\newcommand{\Spv}{\Spu} 
\newcommand{\Spvr}{H^1_{\GD}(\Omega{\setminus}\GC)} 
\newcommand{\Spw}{L^2(\Omega;\R^3)}
\newcommand{\Spz}{L^1(\GC)} 
\newcommand{\grsysepsk}{(\calV_{\eps_k},\calR_{\eps_k},  \calE_{\eps_k})}
\newcommand{\ingrsysreps}{(\rese K\eps,\rese V\eps, \rese R \eps, \rese E\eps)}
\newcommand{\ingrsyslim}{(\rese K{},\Veve, \rese R {}, \Eve)}
\newcommand{\ingrsysrepsk}{(\rese K{\eps_k},\rese V{\eps_k},\rese R{\eps_k},\rese E{\eps_k})}
\newcommand{\eps}{\varepsilon}
\newcommand{\pairing}[4]{ \sideset{_{ #1 }}{_{ #2 }}  {\mathop{\langle #3 , #4
\rangle}}}
\newcommand{\foraa}{\text{for a.a.\,}}
\newcommand{\aein}{\text{a.e.\ in}}
\newcommand{\dissr}{\calR}
\newcommand{\Msym}{\mathbb{R}_{\mathrm{sym}}^{3\times 3}}
\newcommand{\dd}{\, \mathrm{d}}
\newcommand{\BV}{\mathrm{BV}}
\newcommand{\weakto}{\rightharpoonup}
\newcommand{\weaksto}{\overset{*}{\rightharpoonup}}
\newcommand{\QED}{\mbox{}\hfill\rule{5pt}{5pt}\medskip\par}
\newcommand{\down}{\downarrow}
\newcommand{\Dir}{\GD}
\newcommand{\dir}{\gD}
\newcommand{\res}[1]{\mathfrak{F}[#1]}
\newcommand{\resop}{\mathfrak{F}}
\newcommand{\resh}[1]{\widehat{\mathfrak{F}}[#1]}
\newcommand{\reshop}{\widehat{\mathfrak{F}}}
\newcommand{\resg}[1]{\widehat{\mathfrak{f}}[#1]}
\newcommand{\resgop}{\widehat{\mathfrak{f}}}
\newcommand{\resz}[1]{\mathfrak{f}[#1]}
\newcommand{\reszop}{\mathfrak{f}}
\newcommand{\resw}[1]{\widecheck{\mathfrak{F}}[#1]}
\newcommand{\rese}[2]{\mathsf{#1}^{#2}}
\newcommand{\resed}[2]{\dot{\mathsf{#1}}^{#2}}
\newcommand{\resedd}[2]{\ddot{\mathsf{#1}}^{#2}}
\newcommand{\resei}[3]{\mathsf{#1}^{#2}_{#3}}
\newcommand{\reseid}[3]{\dot{\mathsf{#1}}^{#2}_{#3}}
\newcommand{\reseidd}[3]{\ddot{\mathsf{#1}}^{#2}_{#3}}
\newcommand{\reseiddd}[3]{\dddot{\mathsf{#1}}^{#2}_{#3}}
\newcommand{\cv}[1]{r_{#1}} 
\newcommand{\reso}[2]{\Lambda_{#1}(#2)}
\newcommand{\epl}{e_{\mathrm{plan}}}
\newcommand{\sfu}{\mathsf{u}}
\newcommand{\sfx}{\mathsf{x}}
\newcommand{\sfz}{\mathsf{z}}
\newcommand{\sfw}{\mathsf{w}}
\newcommand{\sff}{\mathsf{f}}
\newcommand{\sfd}{\mathsf{d}}
\newcommand{\sfe}{\mathsf{e}}
\newcommand{\Bigf}{\mathrm{F}}
\newcommand{\loc}{\mathrm{loc}}
\newcommand{\red}{\mathrm{r}}
\newcommand{\sym}{\mathrm{sym}}
\newcommand{\KL}{\mathrm{KL}(\Omega;\R^3)}
\newcommand{\KLGD}{\mathrm{KL}_{\GD}(\Omega;\R^3)}
\newcommand{\Dr}{\mathsf{D}_{\mathrm{r}}}
\newcommand{\Cr}{\mathbb{C}_{\mathrm{r}}}
\newcommand{\weak}{\mathrm{weak}}
\definecolor{ddcyan}{rgb}{0,0.2,0.8}
\definecolor{ddmagenta}{rgb}{0.8,0,0.8}
\definecolor{orange}{rgb}{0.6,0.2,0}
\definecolor{vgreen}{rgb}{0.1,0.5,0.2}
\definecolor{dred}{rgb}{.8,0,0}
\definecolor{Turk}{rgb}{0,0.7,0.4}
\definecolor{purple}{rgb}{0.5,0,0.9}
\definecolor{dcyan}{rgb}{0,0.2,1.0}
\definecolor{dblue}{rgb}{0,0,0.5}
\newcommand{\nc}{\normalcolor}
\newcommand{\EEE}{\color{black}}
\newcommand{\RNEW}{\color{black}} 
\newcommand{\RCO}{\color{black}}
\newcommand{\GIO}{\color{vgreen}}
\newcommand{\Mextname}{\mathcal{M}_{\kern-1pt{\tiny\textsc{ve}}}}
\newcommand{\Eve}{\mathsf{E}_{\kern-1pt{\tiny\textsc{ve}}}}
\newcommand{\Veve}{\mathsf{V}_{\kern-1pt{\tiny\textsc{ve}}}}
\newcommand{\Mext}[1]{\Mextname{#1}}
\newcommand{\mix}[2]{\left[{#1}|{#2} \right]}
\newcommand{\zzeta}{\boldsymbol{\zeta}}
\newcommand{\eeta}{\boldsymbol{\eta}}
\newcommand{\llambda}{\boldsymbol{\lambda}}
\newcommand{\uupsilon}{\boldsymbol{\upsilon}}
\newcommand{\rmA}{\mathrm{A}}
\newcommand{\Temp}{\mathfrak{T}}
\newcommand{\FF}{\boldsymbol{D}}
\newcommand{\EE}{\boldsymbol{E}}
\newcommand{\UU}{\boldsymbol{U}}
    \newcommand{\RREV}{\color{purple}}
\newcommand{\BREV}{\color{purple}}
\newcommand{\RMA}{\color{black}}
\newcommand{\semi}{Semistable }
\newcommand{\SE}{\mathrm{SE}}
\DeclareSymbolFont{extraup}{U}{zavm}{m}{n}
\DeclareMathSymbol{\varheart}{\mathalpha}{extraup}{86}
\DeclareMathSymbol{\vardiamond}{\mathalpha}{extraup}{87}
\author{
  Giovanna Bonfanti%
  \thanksAAffil{DICATAM (Department of Civil, Environmental, Architectural Engineering and Mathematics), 
 Universit\`a degli studi di
  Brescia, via Branze 43, I--25133 Brescia, Italy.
Email: {\ttfamily giovanna.bonfanti@unibs.it}}, \ 
  Elisa Davoli%
  \thanksAAffil{ASC (Institute of Analysis and Scientific Computing), TU Wien, Wiedner Hauptstra\ss e 8-10, 1040 Vienna, Austria. Email: {\ttfamily elisa.davoli@tuwien.ac.at}}, \
  Riccarda Rossi%
  \thanksAAffil{DIMI (Department of Mechanical and Industrial Engineering), Universit\`a degli studi di
  Brescia, via Branze 38, I--25133 Brescia, Italy.
Email: {\ttfamily riccarda.rossi\,@\,unibs.it}}%
}
\begin{document}


\title{A coupled rate-dependent/rate-independent system for adhesive contact in Kirchhoff-Love plates}

\date{}
\maketitle
\begin{abstract}
 We perform a dimension reduction analysis for a coupled rate-dependent/rate-independent adhesive-contact model in the setting of visco-elastodynamic plates. We work with a   weak solvability notion inspired by the theory of (purely) rate-independent processes, and accordingly  term the related solutions   `Semistable Energetic'. For Semistable Energetic solutions, 
  the momentum balance holds in a variational sense, whereas the flow rule for the adhesion parameter is replaced by a semi-stability condition coupled with an energy-dissipation inequality.
 \par 
  Prior to addressing the   dimension reduction analysis, we show that Semistable Energetic
 solutions to the three-dimensional damped adhesive contact model converge,
as the viscosity term tends to zero, to three-dimensional Semistable Energetic solutions for
the undamped corresponding system.
\par
\RMA We then perform a dimension reduction analysis, both in the case of a vanishing viscosity tensor, and in the complementary setting in which the damping is assumed to go to infinity as the thickness of the plate tends to zero. In both regimes, the presence of  adhesive contact yields a nontrivial coupling of
 the
  in-plane and out-of-plane contributions.
  \par
  In the vanishing-viscosity case we additionally confine the analysis to the case in which  also inertia is neglected: in the vanishing-thickness limit we thus obtain purely rate-independent
  evolution for the adhesive contact phenomenon, still formulated in terms of the Semistable Energetic solution concept.
  In the second, 
    undamped scenario, inertia is instead encompassed, thus the limiting evolution retains a mixed rate-dependent/rate-independent character, 
     and is again given in terms of  an energy-dissipation inequality and a semistability condition. \EEE
\end{abstract}
%
\noindent
\textbf{2020  Mathematics Subject Classification:} 49J53, 49J45,  74M15, 74R10.


\noindent
\textbf{Key words and phrases:} Adhesive contact, Kelvin-Voigt visco-elasticity, inertia,  dimension  reduction, Kirchhoff-Love plates, 
coupled rate-dependent/rate-independent evolution, Energetic solutions.

\section{Introduction}

 The often intrinsic nonconvexity and nonlinearity of most three-dimensional models for inelastic phenomena  lead to \EEE notable hurdles for their numerical approximations and simulations. The variational identification of reduced lower dimensional models has thus thrived in the past thirty years as a 
   valuable \EEE
  modeling tool in continuum mechanics. In this paper we address a  dimension  reduction analysis for a model of adhesive contact
between two bodies, 
in the frame of  visco-elastodynamics.

The mathematical literature on dimension reduction is vast. Starting from the seminal papers \cite{acerbi-buttazzo-percivale, ledret-raoult, friesecke-james-muller} which sparked the effort towards an identification of static reduced models in nonlinear elasticity, limiting models have been deduced in a variety of settings, in static, quasistatic, and dynamic regimes, spanning from elastodynamics \cite{abels-mora-muller, abels-mora-muller2} and visco-elasticity \cite{friedrich-machill} to delamination
 (see \cite{FreParRouZan11, FreRouZan-2013}, as well as 
 \cite{MiRoTh10DDNE}), plasticity \cite{davoli-mora, DavoliMora15, Maggiani-Mora}, and crack propagation \cite{babadjian, freddi-paroni-zanini}. 
 \par
  The analysis in this paper  moves from \cite{FreParRouZan11}, in which the dimension reduction analysis was carried out for a \emph{purely} rate-independent model of delamination and, correspondingly, for  the related  \emph{Energetic} solutions \`a la \textsc{Mielke/Theil} \cite{MieThe04RIHM}. We have instead addressed an adhesive contact model in the setting of visco-elastodynamic plates; the related process has thus a  \emph{mixed}  rate-dependent/rate-independent character, calling for an appropriate weak solution notion. We have focused on how the presence of viscous effects in the momentum balance affects the properties of 
  reduced Semistable Energetic Solutions for the  thin plate model. The setup we have considered and our results are detailed below. 
\EEE

\subsection*{The model}
 We consider a model describing the mechanical evolution during a time interval $(0,T)$ of two viscoelastic bodies $\Omega_+$ and $\Omega_-$ in $\R^3$ that are in contact with adhesion along a prescribed surface portion $\GC$  (see Figure $1$ below for the special case in which $\Omega$ has a cylindrical geometry and the contact surface $\GC$ is vertical). \EEE

In its
\emph{classical formulation},  such evolution is governed by a \EEE momentum balance, with viscosity and inertia,
for the displacement  field $u:(0,T)\times ( \Omega_+ {\cup} \Omega_-)\to \R^3$,  
\EEE
namely
\begin{subequations}
\label{adh-con-intro}
\begin{equation}
\label{mom-balance-intro}
\varrho \ddot{u} - \mathrm{div} \left(\bbD e(\dot u) + \bbC e (u) \right)
= f \quad \text{ in } (0,T) \times ( \Omega_+ {\cup} \Omega_-),
\end{equation}
with \RMA $\varrho \geq 0  \EEE $ the mass density of the body,  $\bbD$ and $\bbC$ the viscosity and the elasticity tensors, \EEE $e(u):=\frac{1}{2}(\nabla u+\nabla u^\top)$ the linearized strain
tensor
and $f$  a  time-dependent applied volume force.  
Equation \eqref{mom-balance-intro} is  supplemented with  time-dependent Dirichlet
boundary conditions on the Dirichlet part  $\GD$  of the boundary $\partial\Omega$, where
$\Omega := \Omega_+ \cup \GC \cup \Omega_-$.  For simplicity we will assume that
the applied traction on the Neumann part $\GN = \partial\Omega \setminus \GD$ is null, namely
\begin{equation}
\label{bc-intro}
u=w \quad \text{  on } (0,T) \times \GD,  \qquad \qquad
\left(\bbD e(\dot u) + \bbC e (u) \right)|_{\GN} n= 0 \quad \text{  on } (0,T) \times \GN,
\end{equation}
with $n$  the outward unit normal to $\partial\Omega$.

 Following the approach by Fr\'emond, cf.\ \cite{Fre12} and the pioneering paper \cite{FreNed96DGDP}, the evolution of 
 adhesion between the two bodies is described in terms of an internal variable $z:(0,T)\times \GC \to [0,1]$  that is in fact \EEE  a surface-damage parameter, as it describes the fraction of fully effective molecular links in the bonding. Namely $z(t,x)=0$ (resp. $z(t,x)=1$) means that the bonding is completely broken (resp. fully intact) at the time $t\in (0,T)$, at the material point $x\in \GC$,  with $z(t,x)\in (0,1)$ for the intermediate states. \EEE
The evolutions of $u$  and 
of the adhesion parameter
$z$ \EEE are coupled through the following boundary
condition on the contact surface $\GC$xf
\begin{equation}
\label{cont-surf-intro}
  \left(\bbD e(\dot u) + \bbC e (u) \right)|_{\GC}  n +  \alpha_\lambda (\JUMP{u})\EEE + \kappa z \JUMP{u} =0
\quad \text{ on } (0,T) \times \GC,
\end{equation}
where, with a slight abuse of notation,
$n$ indicates here the unit normal to $\GC$ oriented from $\Omega_+$
to $\Omega_-$, $\kappa$ is a positive constant  and the symbol
$ \JUMP{u} := u_+ - u_-$ denotes the jump of $u$ across the interface $\GC$,  as $u_\pm$ is  the trace on $\GC$ of the restriction of $u$ to $\Omega_\pm$. 
In the boundary condition  \eqref{cont-surf-intro},  $\alpha_\lambda (\JUMP{u})+\kappa z \JUMP{u}$ represents the contact reaction and the term
 $\alpha_\lambda (\JUMP{u})$ (where the 
 Lipschitz continuous function
 $\alpha_\lambda: \R^3 \to \R^3 $ is the Yosida regularization of the  convex analysis subdifferential of the indicator function $I_{[0,+\infty)}$) penalizes 
the interpenetration between the two parts $\Omega^+$ and $\Omega^-$, yielding an approximation of the unilateral constraint $\JUMP{u}\cdot n\geq 0$ on $\GC$. In fact,
while the original model proposed by Fr\'emond \cite{Fre12} contains the impenetrability condition on the contact surface, in the present analysis, dealing with inertial terms in the momentum balance, we have chosen to keep an approximation of this constraint (cf.\ also Remark  \ref{rmk:few-comments}  below).
Moreover,
the contribution $ \kappa z \JUMP{u}$ in \eqref{cont-surf-intro}, due to adhesive contact, penalizes  
displacement jumps in points with strictly positive $z$ but does not exclude them. 
We observe that the blow-up of the coefficient $\kappa$ would lead to a different model with the {\it brittle constraint} $ z \JUMP{u}=0$ 
that allows for displacement jumps (i.e., $\JUMP{u}\ne 0$) only at points where the bonding is completely broken (i.e., $z=0$), and otherwise 
imposes the transmission condition $\JUMP{u}=0$ on the displacements.
\par
From the principle of virtual power, in which microscopic forces responsible for the degradation of the adhesive substance are included, the evolution of $z$ is ruled by
\EEE
\begin{equation}
\label{adh-flow-rule-intro}
\partial \mathrm{R}(\dot z) + \mathrm{b} \partial\calG(z)  +\partial I_{[0,1]}(z) -a_0 \ni - \tfrac{1}2  \kappa \left| \JUMP{u} \right|^2 \quad \text{ on } (0,T) \times \GC,
\end{equation}
 where   $\partial \mathrm{R}: \R \rightrightarrows \R $ is the subdifferential 
 of the $1$-homogeneous dissipation potential $\mathrm{R}$ defined as follows:
\begin{align}
\mathrm{R}(\dot z):=\left\{
\begin{array}{ll}
    a_1|\dot z|&\text{if } \dot z\leq 0\,,\\
\infty&\text{otherwise}\,,
\end{array}
\right.
\end{align}
with  $a_0$ and $a_1$  positive coefficients.  Indeed,  this choice for $\mathrm{R}$ imparts a rate-independent character to the flow rule for $z$. \EEE
By means of $\mathrm{R}$ we are encompassing the unidirectional evolution condition $\dot z\leq 0$, that is we are taking into account the irreversibility of the damage process on the contact surface. In \eqref{adh-flow-rule-intro}, 
$\partial I_{[0,1]}$ is the subdifferential of the indicator function of the interval $[0,1]$,  which forces $z$ to assume admissible values, and $\mathrm{b}$ is a \emph{nonnegative} coefficient  modulating the regularizing term $\partial\calG$. We emphasize that this regularization will be   active as soon as $\mathrm{b}>0$, but we will also address the case in which $\mathrm{b}=0$. More precisely,
 we will consider a 
 \emph{$\mathrm{BV}$-gradient} contribution along the footsteps of \cite{RosTho12ABDM},
 which tackled the analysis of a system modelling adhesion between two thermo-viscoelastic bodies and in particular addressed \EEE  the limit passage from \emph {adhesive contact} to \emph {brittle delamination}.  
  As in \cite{RosTho12ABDM}, in the definition of $\mathcal{G}$ we will encompass a strengthening of 
 the physical constraint on $z$ by further enforcing $z\in \{0,1\}$. In \cite{RosTho12ABDM},  such restriction brought \EEE  along some crucial analytical advantages in the limit passage procedure.  With the aim of extending the present investigation to the case of \emph {brittle delamination} models, we have   kept \EEE the regularizing term $\partial\calG$ into \eqref{adh-flow-rule-intro}, actually carrying out our analysis both in the case with, and without, such a regularization (see Remark  \ref{rmk:few-comments}   later on).
\par
Finally, we will supplement the above boundary-value problem with the initial conditions
\begin{equation}
\label{init-conds-intro}
u(0) = u_0 \text{ in }   \Omega, \qquad \dot{u}(0) = \dot{u}_0   \text{ in }  \Omega, \qquad z(0) = z_0  \text{ on }  \GC\,.
\end{equation}
\end{subequations}

\par
Due to the expected poor time regularity of the adhesion parameter $z$, the adhesive contact system \eqref{adh-con-intro} will be weakly formulated in a suitable way.
More precisely, we will resort to an Energetic-type solvability notion in which the momentum balance equation will be satisfied in a variational sense while a semi-stability condition, joint with an energy-dissipation inequality,  will hold as weak formulation of the flow rule for the adhesion parameter. 
 This solution concept is due to \textsc{T.\ Roubi\v{c}ek} \cite{Roub08,Roub10TRIP}, see also \cite{RosTho15CEx} from which we borrow the term `Semistable Energetic' for the associated solution curve, cf.\  Definition \ref{def:energetic-sol1} ahead. 

In fact, in the paper we will also work with an \emph{enhanced} version of  \semi Energetic  solutions, for which we will claim the validity of an energy-dissipation \emph{balance},  see
Definition \ref{def:energetic-sol2}.    We will term such solutions \emph{Balanced}   \semi Energetic solutions and  in fact obtain them as soon as the damping term in the momentum balance equation is present and yields additional  spatial regularity for $\dot u$. \EEE

 \subsection*{Our results}
  Our work  sparks from the  asymptotic analysis carried out in \cite{FreParRouZan11}, which we have extended to the case of visco-elasto-dynamics. 
\par
With this aim, preliminarily we have gained insight into the role of the damping term in the momentum balance  for the three-dimensional 
adhesive contact system. Namely, in the case of fixed positive thickness, we have carried out   an asymptotic analysis in the system, 
 as the coefficient of the damping term tends to zero.
 For the damped system we have at our disposal a result guaranteeing the existence of   \emph{Balanced} \semi Energetic solutions.
Now, 
 in \underline{\bf  Theorem \ref{thm:pass-lim-nu}} ahead  we have shown that in this asymptotic regime   \emph{Balanced} \semi Energetic solutions for the damped adhesive contact system \eqref{adh-con-intro} converge to a  \semi Energetic solutions for the  corresponding undamped system, in particular extending a previous existence result proven in \cite{RosThoBRI-INERTIA}. 
 As a matter of fact, the disappearance of the damping term brings about a loss of time regularity for the displacement $u$, which ultimately prevents us from obtaining  an energy balance.

 The influence of  damping is even more apparent in the   dimension reduction analysis for the   adhesive contact problem: because of the viscosity  and the inertial terms
in the momentum balance, the system acquires a mixed rate-dependent/rate-independent character which makes the asymptotic analysis significantly different from the purely rate-independent case considered in \cite{FreParRouZan11}. In this, the roles of viscosity and inertia are tightly related.
\par
In fact,  first of all we will address dimension reduction in  a regime in which the damping term   disappears in the vanishing-thickness limit. 
As we will see, 
in this first case the  inertial term as well needs to be neglected, already for positive thickness.   We will then prove
that, in the vanishing-viscosity limit, (Balanced) \semi Energetic solutions of the 3D adhesive contact system converge to \semi
Energetic solutions of the plate model, in which the displacement variable is in elastic equilibrium and the delamination parameter evolves rate-independently. Thus, the limiting system is purely rate-independent. 
\par
Secondly, we will tackle the vanishing thickness analysis in a 
regime that retains both the damping, and the inertial terms. In this case, we will again obtain convergence   to \semi
Energetic solutions for the limiting system, which preserves a mixed rate-dependent/rate-independent nature. 
\par
To delve into our results, let us  specify the geometry for the dimension reduction analysis. 
We consider  a thin, cylindrical plate $\Omega_\eps$ of height $\eps>0$ where the contact surface $\GC^\eps$ is positioned vertically. We refer to Figure $1$ below for a depiction of the geometry of $\Omega:=\Omega_1$ where all sub- and superscripts are, for simplicity, omitted.

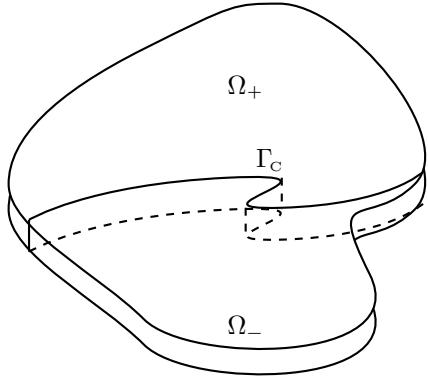
\begin{figure}[h!]
\centering
\begin{tikzpicture}[scale=0.8]
\label{fig:omega}
 \filldraw[thick,fill=white,scale=1.5]
	[xshift=4,yshift=-10] (-1.5,1)  .. controls (-1.5,1.5) and (-1,2) ..
	(0,2.5) .. controls (1,3) and (1,3) ..
	(1.5,3) .. controls (2.25,3) and (3.5,1.5) .. 
	(3,1) .. controls (2.5,0.5) and (2,1) .. 
	(2.5,0)  .. controls (3,-1) and (0.5,-1) ..
	(0,-0.5) .. controls (-0.5,0) and (-1.5,0.5) ..
	(-1.5,1);
	\filldraw[thick,fill=white,scale=1.5]
	[xshift=4,yshift=-2] (-1.5,1)  .. controls (-1.5,1.5) and (-1,2) ..
	(0,2.5) .. controls (1,3) and (1,3) ..
	(1.5,3) .. controls (2.25,3) and (3.5,1.5) .. 
	(3,1) .. controls (2.5,0.5) and (2,1) .. 
	(2.5,0)  .. controls (3,-1) and (0.5,-1) ..
	(0,-0.5) .. controls (-0.5,0) and (-1.5,0.5) ..
	(-1.5,1);
	\draw[black, thick] plot [smooth, tension=2, out=-20, in=-20] coordinates {(-1.7,0.8)(1.6,1.5)(2.5,1)(4.85,1.6)};
        \draw[black, thick] (-1.7,0.8)--(-1.7,0.27);
	\draw[black, thick, dashed] (2.5,1.5)--(2.5,0.97);
 \draw[black, thick, dashed] plot [smooth, tension=2, out=-20, in=-20] coordinates {(-1.7,0.27)(1.6,0.97)(2.5,0.47)(4.85,1.07)};
 \draw[black, thick, dashed] (1.9,1)--(1.9,0.55);
 \node at (1.9, 3) {$\Omega_+$};
 \node at (2.3, 1.8) {$\GC$};
 \node at (1.9, -1) {$\Omega_-$};
	\end{tikzpicture}
 \caption{The set $\Omega=\Omega_+\cup \GC\cup \Omega_-$}
\end{figure}

As  customary in dimension reduction problems, as a preliminary step, we will  perform a  suitable \EEE  rescaling of the variables. While our spatial rescaling will be consistent with that of linearly elastic dimension reduction problems \cite{friesecke-james-muller}, a further time-rescaling will be  needed to cope with possible very slow oscillations occurring in $\Omega_\eps$ and emerging in the limit as $\eps\to 0$. We refer to \cite{abels-mora-muller} and \cite{Maggiani-Mora} for an analogous space-time rescaling in the dynamically  elastic and perfectly plastic settings, respectively.

 In our first convergence result, \textbf{Theorem \ref{mainthm-1}}, 
  we will address the vanishing-thickness analysis for a 3D system featuring a \emph{quasistatic} momentum balance, with no inertial terms, and the viscosity tensor is multiplied by a (positive) coefficient that also vanishes. 
We will prove that the limiting plate model is described by an  elastic equilibrium equation for the displacements (i.e., no inertia and no damping), 
featuring a reduced elasticity tensor on the in-plane directions.
Due to the absence of any additional time-regularizations, in this setting we will solely  deduce an energy-dissipation \emph{inequality}. \EEE

In our second convergence result, see \textbf{Theorem \ref{mainthm-2}}, we will consider the complementary scenario in which  the damping coefficient is assumed to go to infinity as the thickness goes to zero,  inertia is retained, \EEE  and some standard scaling on the adhesive contact term is considered.  We will show that in this case the limiting system exhibits  a viscoelastic behaviour with an adhesive contact condition featuring both the in-plane and the out-of-plane contributions. Despite the additional compactness provided by the persistence of damping effects in the limit, the analysis  in this setup  is quite delicate for, a priori,  the limiting viscous and elastic bilinear forms in the weak momentum balance would depend on the planar strains of the displacement and test functions in a temporally nonlocal way involving an appropriate operator $\Mextname$,  cf.\ Sec.\ \ref{ss:6.1} ahead. 
We consider in our analysis the simplified scenario in which both the elasticity and viscosity tensors keep planar and out-of-plane effects decoupled,  thanks to  a suitable condition,
cf.\  \eqref{condX-1} ahead. \EEE
 Despite the enhanced temporal regularity enjoyed by the limiting displacement, neither in this case  we  have succeeded in proving an energy-dissipation \emph{balance} 
for the \semi Energetic solutions to the plate system, cf.\ Remark \ref{rmk:despite} later on.
\EEE


In both our dimension reduction results, as already observed in other inelastic dimension reduction settings (see, e.g.,\cite{DavoliMora15}), a full decoupling of the limiting in-plane and out-of-plane contributions into two different momentum balance equations for the in-plane and out-of-plane displacements is, in general, not  feasible, \EEE unless further assumptions on the adhesive contact are required. We briefly comment on this point in Remarks \ref{rk:mom-balance-pb1} and \ref{rk:mom-balance-pb2}.

\subsection*{Plan of the paper}
In Section \ref{s:2}  we specify our standing assumptions on the  3D  adhesive contact system, introduce Semistable Energetic (for short, $\SE$)  solutions and Balanced $ \SE$, and prove
our existence result,  Theorem \ref{thm:pass-lim-nu}, for the undamped system. Section   \ref{s:dim-red} sets the stage for the dimension reduction analysis, while in Section 
\ref{s:main-results} we state our two main results, Theorems  \ref{mainthm-1} \& \ref{mainthm-2}. After settling some preliminary results in Section \ref{s:prelim-ests}, we prove the former in Section \ref{s:proof-th1} and the latter in Section \ref{s:proof-th2}. 
\EEE

\medskip

\section{Semistable Energetic solutions for the damped and undamped 3D adhesive contact systems}
\label{s:2}
This section focuses on the 3D adhesive contact system, in that  we 
 introduce the weak solvability notion  of  Semistable Energetic solution,  we also define \emph{Balanced} $\SE$ solutions, and we 
state the existence of  (Balanced) $\SE$ solutions
for the damped 3D system in Theorem \ref{thm:RT}. 
From it, through an asymptotic procedure, we deduce the existence of $\SE$ solutions for the undamped system. 
Prior  to specifying  the setup for our analysis, let us settle some general notation.
\begin{notation}
\label{not:1.1}
\upshape
Let $X$ be a (separable) Banach space. By
$\pairing{}{X}{\cdot}{\cdot}$  we will  denote the duality between $X^*$ and $X$ or between $(X^n)^*$ and $X^n$
(whenever $X$ is a Hilbert space, $\pairing{}{X}{\cdot}{\cdot}$  will be the inner product),  while $\| \cdot \|_{X}$ stands for  
the norm in $X$ and, most often, in $X^n$ as well.
The inner Euclidean product in $\R^n$, $n\geq 1$, will be denoted by $\pairing{}{}{\cdot}{\cdot}$ and the Euclidean norm in $\R^n$  by $|\cdot| $.
\RCO We will also denote by $|A|: = \sqrt{\mathrm{tr}(A^\top A)}$ the Frobenius norm of a matrix $A\in \R^{n\times n}$, and by $:$ the associated scalar product, so that $A:B:= \mathrm{tr}(A^\top B)$. 
 Moreover, $\R_\sym^{n\times n}$ will denote the space of $n\times n$ symmetric real matrices.
\EEE
\par
We  will  write $\| \cdot \|_{L^p}$ for the $L^p$-norm on the space $L^p(O;\R^d)$, with $O$ a measurable subset of $\R^n$ and $1\leq p\leq \infty$,  and similarly $\| \cdot \|_{H^k}$ for the norm of the  space $H^k(O;\R^d)$.
 \par
Given a function $v:(0,T)\times \Omega\to\R$ differentiable,  w.r.t.\ time,  almost everywhere in  $(0,T)\times \Omega$, its (a.e.\ defined) partial time derivative 
will be  indicated by $\dot{v}:(0,T)\times \Omega\to\R$. 
A different notation will be employed when considering $v$ as a (Bochner) function  from $(0,T)$ with values in a Lebesgue or Sobolev space $X$  (with the Radon-Nikod\'ym property):
if $v\in\mathrm{AC} ([0,T];X)$,  then its  (almost everywhere defined)  time derivative is indicated by $v':(0,T) \to X$. 
\RCO Finally, let $X$ be, in addition, reflexive. Given  $(u_n)_n, u \in \romanC^0([0,T];X)$,
 whenever we 
write 
\[
u_n \to u \quad \text{in }  \romanC^0([0,T];X_{\mathrm{weak}})
\]
we will mean the following:  first of all, that $(u_n)_n \subset L^\infty (0,T;X)$ is bounded, hence there exists $R>0$ such that for every $n\in N$
  the image sets $u_n([0,T])$ are contained  in the closed ball $\overline{B}_R$. Let $\mathrm{d}_{\weak}$ be a distance inducing on $\overline{B}_R$
  the weak topology of $X$: convergence in $\romanC^0([0,T];X_{\mathrm{weak}})$ means convergence in $\romanC^0([0,T]; (X,d_{\mathrm{weak}}))$. \nc
\par
Last but not least, the symbols $c,\,c',\, C,\,C'$ will denote positive constants whose precise value may vary from line to line (or even  within the same   line).
We will sometimes employ the symbols $I_i$,  $i = 0, 1,... $,
 as place-holders for  terms appearing in inequalities: also in this case, such symbols may appear in different proofs with different meaning.   
\end{notation}
\paragraph{\bf Setup.}
Throughout the paper, the standing assumptions
 on the 3D domain $\Omega = \Omega_+\cup \GC \cup \Omega_- $, are
\begin{subequations}
\label{ass-domain}
\begin{eqnarray}
\label{ass-domain1}
\hspace*{-3em}
&&    
\text{$\Omega\subset\R^3$ is
bounded, $\Omegatwo,\, \Omegaone,\, \Omega$ are Lipschitz domains,
$\Omegaone \cap \Omegatwo=\emptyset$, $\GC=\barOmegaone\cap\barOmegatwo$}\,,
\\
\label{ass-domain2}
\hspace*{-3em}
&&
\partial \Omega = \overline{\GD}\cup \overline{\GN},
\text{ with $\GD,\,\GN$ open subsets in $\partial\Omega$,} \
\\
\label{ass-domain2+}
\hspace*{-3em}
&&\GD\cap \GN=\emptyset, \ \barGD\cap\barGC=\emptyset, \ 
\mathcal{H}^{2}(\GD\cap\barOmegaone)>0\,,\
\mathcal{H}^{2}(\GD\cap\barOmegatwo)>0\,, \
 \mathcal{H}^{2}(\GC) >0\,, \EEE
\end{eqnarray}
where $\mathcal{H}^{2}$ 
denotes 
the $2$-dimensional Hausdorff measure.  
\end{subequations}
For a given $u\in H^1(\Omega{{\setminus}}\GC;\R^3)$, the symbol
\[
\JUMP{u} := u_+ - u_-, \quad \text{with } u_\pm \text{ the trace on $\GC$ of the restriction of $u$ to $\Omega_\pm$},
\]
will denote the \emph{jump of $u$ across the interface $\GC$}. 
 In what follows, we will use the notation
\[
H^1_{\GD}(\Omega{{\setminus}}\GC;\R^3):= \{ v \in H^1(\Omega{\setminus}\GC;\R^3)\, : \ v=0\text{ a.e.\ on }\GD\}
\] 
(where we again  have omitted the $\GD$-trace
  operator) and simply write $\| \cdot \|_{H^1}$ when no confusion may arise. 
   Moreover, we will denote by  $\langle \cdot,\cdot \rangle_{H^1_{\GD}(\Omega{\setminus}\GC)}$    the duality pairing between $H^1_{\GD}(\Omega{{\setminus}}\GC;\R^3)^*$ and $ H^1_{\GD}(\Omega{{\setminus}}\GC;\R^3)$. \EEE
\par
Throughout the paper we 
shall assume that $\varrho$ is a given positive constant.
We will also suppose that 
 the elasticity tensor $\mathbb{C}$ 
fulfills
\begin{subequations}
\label{assdata} 
\begin{equation}
\label{assCD} 
\begin{split}
&\mathbb{C}\in\R^{3\times 3\times 3\times 3} 
\text{ is  symmetric, i.e. }  \mathbb{C}_{ijk\ell} =  \mathbb{C}_{k\ell ij} = \mathbb{C}_{jik\ell}  =  \mathbb{C}_{ij\ell k}  \quad i,j,k,\ell \in \{1,2,3\} \EEE
 \EEE
\\
&\text{$\mathbb{C}$ is  positive definite, i.e., } \exists\,C_\mathbb{C}^1,C_\mathbb{C}^2>0 
 \ \forall\,
\eta\in\R_{\sym}^{3\times 3}:\; \  \EEE
C_\mathbb{C}^1|\eta|^2\leq\eta:\mathbb{C}\eta\leq C_\mathbb{C}^2|\eta|^2\,.
\end{split} 
\end{equation}
For the damped system, we will also consider
\begin{equation}
\label{viscosityTensor}
\text{
 a viscosity tensor $\bbD \in \R^{3\times 3\times 3\times 3}, 
$ symmetric and positive definite in the sense of \eqref{assCD}.}
\end{equation}
\par
 Finally, we will assume that the volume force $f$ and the 
  Dirichlet loading $w$ fulfill
\begin{align}
\label{assRegf}
&
f \in W^{1,1}(0,T;L^2(\Omega;\R^3))
\\
&
\label{hyp-w}
w\in  W^{2,1}(0,T;H^1(\Omega;\R^3)) \cap  W^{3,1}(0,T;L^2(\Omega;\R^3)) \,,
\end{align}
and that the system is supplemented with initial data\
\begin{equation}
\label{initial-data}
u_0 \in H^1_{\GD}(\Omega{{\setminus}}\GC;\R^3), \qquad \dot{u}_0 \in L^2(\Omega;\R^3), \qquad z_0
 \in 
 \begin{cases}
 L^\infty(\GC;[0,1]), \EEE & 
 \\
 \SBV(\GC;\{0,1\}) & \text{if } \mathrm{b}>0\,. 
 \end{cases}
\end{equation}
\end{subequations}
\begin{remark}
\label{rmk:symmetry tensor}
\upshape We point out that condition \eqref{assCD} is equivalent to the classical symmetry conditions required also in \cite[(2.4)]{FreParRouZan11}. In other words, all four equalities in \eqref{assCD} are either equal to--or can be directly deduced from--the properties in \cite[(2.4)]{FreParRouZan11}.
\end{remark}\EEE
\begin{remark}[Square and square root of fourth order tensors]
\label{rmk:sqrt}
\upshape
Given $A,B\in \R_{\sym}^{3\times 3}$ and  tensors  $  \FF, \,  \EE  \in \R^{3\times 3\times 3\times 3}$, positive definite and  fulfilling
the symmetry condition \eqref{assCD}, 
 recall that
\begin{equation}
\FF A{:}B=\sum_{i,j,k,l}  B_{ij}\FF_{ijkl}A_{kl}\quad\text{and}\quad
\FF \, \EE=\Big(\sum_{m,n}\FF_{ijmn}\EE_{mnlk}\Big)_{i,j,k,l } \,. 
\end{equation}
By symmetry we also observe that
\begin{equation}
\label{abssquare}
\begin{split}
|\FF A|^2&=\sum_{i,j}\Big(\sum_{k,l}\FF_{ijkl}A_{kl}\Big)^2
=\sum_{i,j}\sum_{k,l}\FF_{ijkl}A_{kl}\sum_{m,n}\FF_{ijmn}A_{mn}
=\sum_{k,l,m,n}A_{kl}A_{mn}\sum_{i,j}\FF_{klij}\FF_{ijmn}\\
&=\FF^2A{:}A\,.
\end{split}
\end{equation}
Let now $C_{\FF}^1, \, C_{\FF}^2>0$ fulfill:
\[
 \forall\, A\in  R_{\sym}^{3\times 3}  \colon \quad
C_{\FF}^1 |A|^2 \le \FF A{:} A\le C_{\FF}^2 |A|^2\,.
\]
Then, we have 
\begin{equation}
\label{squaresCD}
 \forall\, A\in  R_{\sym}^{3\times 3}  \colon \quad
(C_{\FF}^1)^2 |A|^2 \le \FF^2 A{:} A\le (C_{\FF}^2)^2 |A|^2\,. 
\end{equation}
To find these relations for the constants, we may argue as follows:
Being a linear mapping on $\R^{3 \times 3}$,
we can fix a notation to rewrite any tensor $A\in\R^{3 \times 3}$ as a vector of $3^2$ components and
$\FF$  as a  $\R^{3^2\times 3^2}$ matrix,
 symmetric and positive definite.
Exploiting the spectral decomposition of this matrix we see that the constants
 $C_{\FF}^1,C_{\FF}^2$ 
are bounds for the smallest, resp.\ largest, eigenvalues of
the $\R^{3^2\times 3^2}$ matrices corresponding to $\FF$.  Then,  \eqref{squaresCD} follows taking into account \eqref{abssquare}.
\par
In a similar manner, exploiting the symmetry and (uniform) positive definiteness  as well as the spectral decomposition of the corresponding
$\R^{3^2\times 3^2}$-matrix,  we may conclude the existence of the \emph{square root} of
$\FF$, i.e.,  there is
\begin{subequations}
\begin{equation}
\label{ass-U1}
\UU \in  \R^{3\times 3\times 3\times 3}
\text{ symmetric and positive definite, s.t.\ }\;\FF=\UU^2\,.
\end{equation}
By symmetry, with calculations similar to those performed in \eqref{abssquare} we thus have
\begin{equation}
\label{4heateq}
\FF A{:}A=\UU A {:}\UU A\,.
\end{equation}
In addition, we may check that 
\begin{equation}
\label{ass-U2}
\forall\, A\in  R_{\sym}^{3\times 3}  \colon \quad
\sqrt{C_{\FF}^1}|A|^2\leq\UU A{:}A
\leq\sqrt{C_{\FF}^2}|A|^2\,.
\end{equation}
\end{subequations}
The existence of a square root for the
positive definite, symmetric fourth order tensor $\FF$
is found again by exploiting the spectral properties of the corresponding $\R^{3^2\times 3^2}$ matrix.
After diagonal transform, for this matrix the entries of its square root matrix are
found by taking the square
root of the eigenvalues. This also yields \eqref{ass-U2}, since, as already mentioned,
the constants
$C_{\FF}^1,C_{\FF}^2$  are bounds for the smallest, resp.\ largest, eigenvalue of
$\FF$.
\end{remark} \EEE
\begin{remark}
\label{rmk:data-f-w}
\upshape
While the existence result for the (damped) adhesive contact system from \cite{RosThoBRI-INERTIA} applies to 
the case in which also a surface traction force $g $ is applied
to  the Neumann part of the boundary,
here we will confine the discussion to the case in which only a volume force is applied. This restriction is in view of the dimensional reduction analysis,
since
a spatial rescaling of  $g $ would involve 
additional technical difficulties.
\par
The time regularity of $f$ and $w$  ensures 
that the partial time derivative of the driving
 energy functional $\calE$ from  \eqref{bulk-surface-contribution} is well defined and satisfies  estimate  \eqref{Gronwall-estimate} below. We could weaken 
  conditions \eqref{assRegf} and \eqref{hyp-w} 	
   if we rewrote  the terms involving the power $\partial_t \calE$ of the external forces in a suitable way, \RCO cf.\ Remark \ref{rmk:suitable-way} ahead. \EEE
\end{remark}
%
\subsection{$\SE$ solutions  for the damped   adhesive contact system}
\label{ss:2.1}
%
Prior to recalling the definition of  $\SE$ solution for the (damped) adhesive contact system
 in the 3D domain,  let us settle its energetics. 
\RCO We mention in advance that, for simplicity, in what follows we will work with a \emph{constant} mass density $\varrho$ and likewise 
we will  not encompass a dependence of the tensors $\bbC$ and $\bbD$ on the spatial variable. \EEE
\paragraph{Dissipation potentials and driving energy   functional for the damped system.} 
The evolution of the  adhesive contact system  in the damped case is 
 governed by the following 
  kinetic energy $\calK$,
 viscous dissipation potential $\calV,$ and  $1$-homogeneous
dissipation $\calR$:
\begin{align}
\label{defEkin}
&\calK: L^2 (\Omega;\R^3) \to[0,\infty)\,, &&  \calK(\dot u):=\int_\Omega\tfrac{\varrho}2 |\dot u|^2\,\mathrm{d}x
\,,\\
\label{defV}
&\calV:\Spu\to[0,\infty)\,, &&
\calV(\dot u):=\int_{\Omega{\setminus}\GC}\tfrac{1}{2}\mathbb{D} e(\dot u): e(\dot u)\,\mathrm{d}x\,,  \\
\label{defRk}
&\calR:\Spz\to[0,\infty]\,, && 
\calR(\dot z):=\int_{\GC} \mathrm{R}(\dot z)\,\mathrm{d}\Surf (x) \quad  \text{ with } 
\mathrm{R}(\dot z):=\left\{
\begin{array}{ll}
a_1|\dot z|&\text{if }\dot z\leq 0\,,\\
\infty&\text{otherwise\EEE}\,. 
\end{array}
\right.
\end{align}
\RMA Hereafter, $\varrho\geq 0$ will be a fixed constant, modulating the presence of inertia  in the momentum balance.  \EEE
The driving energy functional  $\calE:[0,T]\times\Spu\times L^1(\GC)\to \R\cup\{\infty\}$ 
 is given by 
\begin{subequations}
\label{energy-definition}
\begin{equation}
\label{bulk-surface-contribution}
\begin{aligned}
  \calE(t,u,z):=
\calE_\bulk (t,u) + \calE_\surf(u,z)\,.
\end{aligned}
\end{equation}
The \emph{bulk} contribution  is given by 
\begin{align}
\label{defEk}
&
\calE_\bulk (t,u): =  \int_{\Omega{\setminus}\GC}\tfrac{1}{2}\mathbb{C}e(u):e(u) \dd x  - \pairing{}{H^1_{\GD}(\Omega{\setminus}\GC)}{\Bigf(t)}{u},
\intertext{where the function  $\Bigf: [0,T] \to \Spu^*$ encompasses the volume force 
and the contributions involving the time-dependent Dirichlet loading $w$, namely}
\label{BigF}
&
\langle \Bigf(t),u\rangle_{H^1(\Omega{\setminus}\GC)}  : = \int_\Omega f(t) u \dd x 
-\int_{\Omega{\setminus}\GC} \bbC e(w(t)): e(u) \dd x -\int_{\Omega{\setminus}\GC} \bbD e(\dot{w}(t)): e(u) \dd x 
-\int_\Omega \varrho \ddot{w}(t) u \dd x\,,
\intertext{while the surface contribution consists of  }
\label{calG-z}
&
 \calE_\surf(u,z): =  \calH(u) +  \calJ (u,z)  +\int_{\GC}\left( I_{[0,1]}(z)
{-} a_0 z \right)  
\,\mathrm{d}\Surf(x)+\mathrm{b} \calG(z) \qquad \text{with } \mathrm{b} \geq 0\,.
\end{align}
In \eqref{calG-z}, the term $\calH$ features the Yosida approximation
 $\widehat{\alpha}_\lambda$ 
 of the indicator function  of 
\begin{equation}
\label{energy-H}
\begin{aligned}
&
\text{the cone } K = \{ v\in \R^3\, : v \cdot n \geq 0 \}, \text{ i.e.} 
\\
&
\calH(u)  : = \int_{\GC} \widehat{\alpha}_\lambda(\JUMP{u}) \,\mathrm{d}\Surf(x) \qquad \text{with }
\widehat{\alpha}_\lambda(v): = \frac1\lambda \mathrm{dist}^2(v, K)
\end{aligned}
\end{equation}
\RCO for some parameter $\lambda>0$ that will be kept \emph{fixed} in what follows. \EEE
What is more,  the coupling term $\calJ$ accounts for the `adhesive contact energy'
\begin{equation}
\label{JKfunc}
\calJ (u,z)   : = \int_{\GC} \tfrac{\kappa}2 z Q(\JUMP{u}) \dd \Surf(x)  \qquad \text{with } Q(v) : = |v|^2\,.
\end{equation}
Finally, the  regularizing contribution $\calG$, which is active as soon as $\mathrm{b}>0$,  forces $z \in \SBV(\GC;\{0,1\})$ \RCO in that it is $\infty$ for $z\in L^1(\GC){\setminus}  \SBV(\GC;\{0,1\})$. Hence, \EEE
the approximate jump set
$\mathrm{J}_z$	  of $z$ is well defined (cf.\ \cite[Def.\ 3.67]{AmFuPa05FBVF}) and  $\calG$ records its $1$-dimensional Hausdorff measure, namely it is defined by 
\begin{equation}
\label{energy-calG}
  \calG(z) = \begin{cases}
\mathcal{H}^1(\mathrm{J}_z) & \text{if } z \in \SBV(\GC;\{0,1\}),
\\
+\infty & \text{otherwise.}
\end{cases}
\end{equation}
\end{subequations}
\begin{remark}
\label{rmk:few-comments}
\upshape
A few comments on $ \calE_\surf$ are in order:
\begin{enumerate}
\item
The contribution $\calH$ penalizes the distance of $\JUMP{u}$ from the cone $K$, i.e.\ the failure of the 
non-interpenetration constraint $\JUMP{u} \cdot n \geq 0$. We will not enforce such constraint (which would correspond to replacing $\widehat{\alpha}_\lambda$
by the indicator function $\widehat{\alpha} = I_{K}$ in the definition of $\calH$), due to the presence of inertia 
in the displacement equation. It is indeed well known that the simultaneous presence of inertial terms and unilateral constraints in the momentum equation poses significant analytical difficulties. They can be dealt with by either formulating 
 the momentum equation  in terms of a variational inequality  (cf. \cite{CocouSR} and \cite{Cocou}),    \EEE
 or by adopting the novel approach proposed in \cite{ScaSchi17}. 
 \par
  On the one hand,  also in view of the forthcoming dimensional-reduction analysis,   we prefer to work with a stronger formulation than that 
provided by a variational inequality.  On the other hand, we believe that the techniques developed in \cite{ScaSchi17} could be adapted to the present case as well, cf.\ Remark 
\ref{rmk:Scala_Schimpi} ahead.
\item
The regularizing term $\calG$ can be interpreted as a `$\BV$ proxy' of the more standard gradient regularizations often featuring in adhesive contact models. It was proposed in \cite{RosTho12ABDM}, where the limit passage from adhesive contact  to \emph{brittle delamination} was addressed for a system modelling adhesive contact between two visco-elastic bodies further subject to thermal fluctuations. This limiting procedure corresponds to letting $\kappa\to\infty$ in \eqref{JKfunc}, which  formally leads to 
$zQ(\JUMP{u}) =0$, namely the \emph{brittle constraint}
\[
z\JUMP{u} =0 \qquad \Surf\text{-a.e.\ in } \GC.
\] 
By such constraint, the transmission condition $\JUMP u=0$ is imposed where adhesion is active, i.e.\ $z>0$. 
Now, $\calG$ strengthens the physical constraint $z\in [0,1]$ by further enforcing $z\in \{0,1\}$: 
 this makes the brittle delamination model akin to a model for Griffith fracture, with $z$ the characteristic function of a  (finite-perimeter) set $Z$ which may be understood as a  complementary of the crack set. 
 From an analytical viewpoint,  in \cite{RosTho12ABDM}  the contribution 
 \begin{equation}
 \label{explanation-perimeter}
\RCO \mathrm{b}   \EEE \calG(z) =  \RCO \mathrm{b}   \EEE   \mathcal{H}^1(\mathrm{J}_z) =   \RCO \mathrm{b}   \EEE  P(Z,\GC)
 \end{equation}
 (where the latter term is the perimeter of the set $Z$ in $\GC$) played  a key role in the construction of recovery test functions for the limit passage, as $\kappa\to \infty$, in the momentum balance. 
 \par
  In this paper we will not address the dimensional reduction analysis for the
brittle delamination model, but only focus on the adhesive contact system. Hence, 
 for the  upcoming results we would not need to encompass
 the regularizing contribution 
  $\calG$ into the surface energy $\calE_\surf$.
  Nonetheless,  in a forthcoming article  we plan to extend our asymptotic analysis to the brittle delamination system. Therein, we will significant rely on the  regularization via perimeter provided by the term $\calG$. That is why, for future reference   
  we will allow for the presence of this contribution to $\calE_\surf$, often distinguishing the cases $\mathrm{b}>0$ and $\mathrm{b}=0$. 
\end{enumerate}
\end{remark}
%
%
\par
 We are now in a position to  precisely state the  two concepts of \semi Energetic 
solution   we will work with throughout the paper. 
Both notions of semistable solutions
 encompass  the weakly formulated momentum balance and 
a semi-stability condition that provides a
  weak formulation of the flow rule
for the adhesion parameter. Moreover, 
for \semi Energetic solutions we will just require an energy-dissipation inequality. Instead, for \emph{Balanced} \semi Energetic solutions, we will claim the validity of 
an energy-dissipation balance. 
In Def.\  \ref{def:energetic-sol1}  (resp.\ Def.\  \ref{def:energetic-sol2}) \EEE below we will state the momentum balance explicitly in the context of  the damped adhesive contact system, while,  for later reference, 
we prefer to give the semi-stability inequality and the  energy-dissipation inequality (resp.\ energy-dissipation balance) \EEE  in their general form, as in  \cite[Def.\ 3.1]{RosTho15CEx}. 
\RMA The statement of the regularity properties for the displacement variable reflects the fact that, also in view of the forthcoming 
dimension reduction analysis, we encompass both the case in which inertial terms are present ($\varrho>0$) and that of a quasistatic momentum balance ($\varrho=0$). \EEE 
\begin{definition}[Semistable  Energetic solution]
\label{def:energetic-sol1}
We call a pair $(u,z) : [0,T] \to \Spu \times \Spz$  a \emph{\semi Energetic} ($\SE$) solution 
of  the  damped adhesive contact system
 $(\mathcal{K}, \mathcal{V}, \mathcal{R}, \mathcal{E})$ \EEE
 if
\begin{subequations}
\label{regularity}
\begin{align}
&
\label{reg-u}
u \in 
  H^1(0,T;\Spu)\,, \quad  \RMA \varrho \dot u \EEE  \in L^\infty (0,T; \Spw)\EEE\,,
 \quad  \RMA \varrho \ddot u\EEE \in L^2(0,T;\Spu^*)\,,
\\
&
\label{reg-z}
z \in  
 L^\infty((0,T){\times} \GC; [0,1]) \EEE \cap   \mathrm{BV}([0,T];\Spz), \qquad z \in L^\infty (0,T; \SBV(\GC; \{0,1\})) \text{ if } \mathrm{b}>0; 
\end{align}
\end{subequations}
fulfill 
\begin{compactitem}
\item[-] the weak momentum balance
\begin{equation} \label{weak-mom}
\begin{aligned}
&
 \langle  \varrho \ddot{u}(t), v \rangle_{H^1(\Omega{\setminus}\GC)} \EEE
+ \int_{\Omega{\setminus}\GC} \left\{ \mathbb{D}e(\dot{u}(t)) {:} e(v) 
{+} \mathbb{C}e(u(t)):e(v) \right\} \,\mathrm{d}x
\\
& \quad 
+\int_{\GC}\alpha_\lambda(\JUMP{u(t)})\cdot \JUMP{v} \dd \Surf(x)
+\int_{\GC} \kappa z\JUMP{u(t)}\JUMP{v}\,\mathrm{d}\Surf (x)
\\
&
= \int_\Omega f(t) v \dd x 
-\int_{\Omega\setminus \GC} \bbC e(w(t)) : e(v) \dd x - \int_{\Omega\setminus \GC} \bbD e(\dot w(t)) : e(v) \dd x
- \int_\Omega \varrho \ddot{w}(t) v \dd x 
\end{aligned}
\end{equation}
for every $v \in H^1_{\GD}(\Omega{{\setminus}}\GC;\R^3)$ and for almost all $t\in (0,T)$, with $\alpha_\lambda$ the G\^ateaux  derivative of $\widehat \alpha_\lambda$,
\item[-] the  semistability condition 
\begin{equation}
\label{semistab-z} 
\calE(t,u(t),z(t)) \leq \calE(t,u(t),\tilde z) +
\calR(\tilde z {-}z(t)) \qquad \text{for all } \tilde z \in \Spz \
\text{and for all } t \in [0,T],\EEE
 \end{equation}
\item[-]  the energy-dissipation inequality 
 \begin{equation}
 \label{enineq}
 \begin{aligned}
\calK(\dot u(t))& 
+  \int_0^t 2 \calV(\dot u(s))  \dd s
+ \Var_{\calR}(z, [0,t])+
 \calE(t,u(t),z(t)) 
 \\
&  \leq  \calK(\dot u(0))
+  \calE(0,u(0),z(0)) + \int_0^t \partial_t\calE(s,u(s),z(s))\,\mathrm{d}s 
\qquad \text{for all } t \in [0,T]\,,
 \end{aligned}
 \end{equation}
 with 
$\Var_{\calR}$ the 
total variation  induced by 
$\calR$, i.e.,  for a given 
 subinterval $[s,t]\subset [0,T],$
\[
\Var_{\dissr}(z; [s,t]) := \sup\left\{ \sum_{j=1}^{N}
\dissr(z(r_j) - z(r_{j-1}))\, : \quad s= r_0<r_1<\ldots<r_{N-1}<r_N=t
\right\}\,. 
\] 
\end{compactitem}
\end{definition}
\begin{definition}[Balanced Semistable  Energetic solution]
\label{def:energetic-sol2}
We call a \semi Energetic solution $(u,z)$ \emph{Balanced} if, in addition, it complies with the  
energy-dissipation \emph{balance} 
 \begin{equation}
 \label{endissbal}
 \begin{aligned}
\calK(\dot u(t))& 
+  \int_0^t 2 \calV(\dot u(s))  \dd s
+ \Var_{\calR}(z, [0,t])+
 \calE(t,u(t),z(t)) 
 \\
& =  \calK(\dot u(0))
+  \calE(0,u(0),z(0)) + \int_0^t \partial_t\calE(s,u(s),z(s))\,\mathrm{d}s 
\qquad \text{for all } t \in [0,T]\,.
 \end{aligned}
 \end{equation}
\end{definition}
\EEE
\begin{remark}[Time-dependent Dirichlet conditions]
\upshape
Observe that, 
for a solution $u$ of \eqref{weak-mom} the function
\begin{equation}
\label{tilde-u}
\tilde{u} : [0,T]\to H^1(\Omega{\setminus}\GC;\R^3) \qquad \tilde{u}(t): = u(t) + w(t)
\end{equation}
solves the momentum equation \eqref{mom-balance-intro} and fulfills the time-dependent Dirichlet condition in  \eqref{bc-intro}. 
\end{remark}
\begin{remark}[Reformulation of the semistability condition]
\upshape
Since the bulk contribution to $\calE$ only depends on the variable $u$ (cf.\ \eqref{defEk}), which is kept fixed in the semistability condition, 
\RCO also in view of \eqref{explanation-perimeter} \EEE
inequality \eqref{semistab-z}  reduces to 
\begin{equation} \label{reduced-semistab}
\begin{aligned}
&
 \int_{\GC}  \tfrac \kappa 2   z(t)|\JUMP{u}(t)|^2 \dd \Surf(x)   -\int_{\GC} a_0 \, z(t) \dd \Surf (x) \EEE + \mathrm{b} P(Z(t),\GC)\\
 &\qquad  \leq  \int_{\GC}   \tfrac \kappa 2  \widetilde{z}|\JUMP{u}(t)|^2 \dd \Surf(x)   -\int_{\GC} a_0 \widetilde{z} \,  \dd \Surf (x) \EEE
+\mathrm{b} P(\widetilde{Z},\GC)\
+\int_{\GC} a_1 |\widetilde{z}{-}z(t)| \dd \Surf (x) \\ & \text{for all } \widetilde{z} \in
\RCO L^1(\GC)  \ 
 (\widetilde{z} \in
 \SBV(\GC;\{0,1\}) \text{ if } \mathrm{b}>0), \EEE  \text{ with } 0\leq \widetilde{z}\leq z(t)  \ \aein\, \GC,  \text{  for every } t \in [0,T].
\end{aligned}
\end{equation}
In \eqref{reduced-semistab}, $\widetilde{Z}$ is a finite-perimeter set such that 
$  \mathrm{b}  \calG(\widetilde{z}) =   \mathrm{b}   P(\widetilde{Z},\GC)$, cf.\ Remark \ref{rmk:few-comments}. 
\end{remark}
\begin{remark}[`Explicit' energy-dissipation balance]
\upshape
For later use we record here that
\[
\begin{aligned}
\partial_t \calE(t,u,z)  & =  -\pairing{}{H^1(\Omega{\setminus}\GC)}{\dot{\Bigf}(t)}{u} 
\\
 &  =
-\int_\Omega \dot{f}(t) u \dd x 
+\int_{\Omega{\setminus}\GC} \bbC e(\dot{w}(t)): e(u) \dd x +\int_{\Omega{\setminus}\GC} \bbD e(\ddot{w}(t)): e(u) \dd x 
+\int_\Omega \varrho  \dddot{w}(t) u \dd x
\end{aligned}
\]
Therefore, 
also taking into account that  $ \Var_{\calR}(z;[0,t]) = \int_{\GC} a_1 (z(0){-}z(t)) \dd \Surf(x)$, 
the energy-dissipation balance
\eqref{endissbal} 
 translates into
 \begin{equation}
 \label{enineq-explicit}
 \begin{aligned}
& \int_\Omega \tfrac{\varrho}2 |\dot u(t)|^2 \dd x 
+\int_0^t \int_{\Omega{\setminus}\GC}\mathbb{D} e(\dot u): e(\dot u)\,\mathrm{d}x \dd s 
+ \int_{\GC}a_1 (z(0){-}z(t)) \dd \Surf(x)
+
 \calE(t,u(t),z(t)) 
 \\
& =  \int_\Omega \tfrac{\varrho}2 |\dot u(0)|^2 \dd x 
+  \calE(0,u(0),z(0)) 
\\
& \quad 
- \int_0^t  \int_\Omega \dot{f}(s) u(s) \dd x 
\dd s 
+\int_0^t \int_{\Omega{\setminus}\GC} \bbC e(\dot{w}(s)): e(u(s)) \dd x  \dd s 
\\
& \quad +
\int_0^t \int_{\Omega{\setminus}\GC} \bbD e(\ddot{w}(s)): e(u(s)) \dd x  \dd s
+ \int_0^t \int_\Omega \varrho \dddot{w}(s) u(s)  \dd x \dd s 
\qquad \text{for all } t \in [0,T]\,. 
 \end{aligned}
 \end{equation}
\end{remark}
\RCO
\begin{remark}[Rewriting the work of the external forces]
 \label{rmk:suitable-way} 
 \upshape
 Relying on the time regularity of $u$ from \eqref{reg-u}, it would be possible to 
 rewrite the work of the  external forces
 $\mathrm{Work}([0,t]):= \int_0^t \partial_t \calE(s,u(s),z(s)) \dd s$ 
 as 
 \begin{equation}
 \label{work-rewritten}
 \begin{aligned}
 &
 \mathrm{Work}([0,t])  = 
  \int_0^t  \int_{\Omega} f \dot u\dd x  \dd s+   \int_\Omega f(0) u(0) \dd x  -    \int_\Omega f(t) u(t) \dd x \EEE 
 \\
 & \quad 
-\int_0^t \int_{\Omega{\setminus}\GC} \bbC e(w): e(\dot u) \dd x \dd s 
+\int_{\Omega{\setminus}\GC} \bbC e(w(t)) : e(u(t)) \dd x - 
\int_{\Omega{\setminus}\GC} \bbC e(w(0)) : e(u(0)) \dd x 
\\ & \quad 
-\int_0^t \int_{\Omega{\setminus}\GC} \bbD e(\dot{w}): e(\dot u) \dd x  \dd s 
+ \int_{\Omega{\setminus}\GC} \bbD e(\dot{w}(t)): e(u(t)) \dd x 
- \int_{\Omega{\setminus}\GC} \bbD e(\dot{w}(0)): e(u(0)) \dd x 
\\ & \quad 
- \int_0^t \int_\Omega  \varrho \ddot{w}\dot{u} \dd x \dd s + \int_\Omega \varrho  \ddot{w}(t) u(t) \dd x - 
 \int_\Omega  \varrho \ddot{w}(0) u(0) \dd x \,.
\end{aligned}
 \end{equation}
 Inserting this in \eqref{enineq} would lead to cancellations with some of the terms 
 in $\mathcal{E}(0,u(0),z(0)) - \mathcal{E}(t,u(t),z(t)). $ Clearly, with this reformulation,
  the dimensional reduction analysis that we are going to perform in 
 Sec.\ \ref{s:dim-red} would involve scaling conditions on $w$ alternative to those assumed in Hypothesis \ref{hyp:data} ahead. 
 \par
 Nonetheless, in what follows, we will stick to the formulation \eqref{enineq} of the energy-dissipation balance for easier reference to the general theory of rate-independent systems. 
\end{remark}
\EEE

The existence of Balanced $\SE$ solutions for the damped adhesive contact system in the 3D body $\Omega$ was proved in
\cite{RosThoBRI-INERTIA} (in fact, therein the case of null Dirichlet boundary conditions was considered, but the result
could be easily extended to the case of time-dependent conditions).
\begin{theorem}{\cite[Thm.\ 2.1]{RosThoBRI-INERTIA}}
\label{thm:RT}
Assume \RCO  \eqref{ass-domain}
 and \EEE  \eqref{assdata}. 
\RMA Let $\varrho\geq 0$. \EEE 
Then,  
there exist  Balanced $\SE$   solutions to the damped adhesive contact system 
satisfying the initial conditions
\begin{equation}
\label{init-conds}
u(0) = u_0 \ \aein \  \Omega, \qquad \dot{u}(0) = \dot{u}_0 \ \aein \ \Omega, \qquad z(0) = z_0  \ \aein \ \GC\,,
\end{equation}
 where the initial data $({u}_0 ,\dot{u}_0 ,z_0)$ fulfill  the semistability condition \eqref{semistab-z} at  $t=0$, i.e.
\begin{equation}
\label{init-conds2}
\calE(0,u_0,z_0) \leq \calE(0,u_0,\tilde z) +
\calR(\tilde z {-}z_0) \qquad \text{for all } \tilde z \in \Spz \,.
\end{equation}
\end{theorem}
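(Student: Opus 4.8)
The plan is to obtain Balanced $\SE$ solutions by a time-incremental (minimizing-movements) scheme tailored to the mixed rate-dependent/rate-independent structure of the damped adhesive contact system, following the strategy for $\SE$ solutions of \cite{RosTho15CEx}. First I would fix $\tau=T/N$, set $t_k:=k\tau$, choose $u_{-1}:=u_0-\tau\dot u_0$, and construct the pairs $(u_k,z_k)$ by a \emph{staggered} procedure: given $u_{k-1},u_{k-2},z_{k-1}$, let $u_k$ be the (unique, by strict convexity and coercivity via Korn's inequality and \eqref{assCD}--\eqref{viscosityTensor}) minimizer over $\Spu$ of
\[
u\mapsto \frac{\varrho}{2\tau^2}\big\|u-2u_{k-1}+u_{k-2}\big\|_{\Spw}^2
+\frac{1}{2\tau}\int_{\Omega\setminus\GC}\bbD e(u{-}u_{k-1}){:}e(u{-}u_{k-1})\dd x
+\calE_\bulk(t_k,u)+\calH(u)+\calJ(u,z_{k-1}),
\]
whose Euler--Lagrange equation is the backward-difference discretization of the momentum balance \eqref{weak-mom} with $z=z_{k-1}$; then let $z_k$ minimize $z\mapsto\calE_\surf(u_k,z)+\calR(z{-}z_{k-1})$ over $\Spz$. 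The $1$-homogeneity of $\calR$ forces $0\le z_k\le z_{k-1}$, existence follows by the direct method (when $\mathrm b>0$ exploiting the compactness in $\SBV(\GC;\{0,1\})$ of sequences with equibounded perimeter, furnished by $\mathrm b\calG$), and the global minimality of $z_k$ together with the subadditivity of $\calR$ yields the discrete semistability $\calE(t_k,u_k,z_k)\le\calE(t_k,u_k,\tilde z)+\calR(\tilde z{-}z_k)$ for every admissible $\tilde z$. Testing the Euler--Lagrange equation for $u_k$ by $u_k-u_{k-1}$, using convexity of $\calE_\bulk(t_k,\cdot),\calH,\calJ(\cdot,z_{k-1})$ and the identity for the inertial increments, then adding the minimality inequality for $z_k$, and summing over $k$, I would obtain a discrete energy--dissipation inequality of the structure of \eqref{enineq}.

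From this inequality, the coercivity \eqref{assCD}--\eqref{viscosityTensor} and the time-regularity \eqref{assRegf}--\eqref{hyp-w} of the data would give, uniformly in $\tau$, the bounds encoded in \eqref{regularity}: the piecewise-affine interpolants of $u$ bounded in $H^1(0,T;\Spu)$ with velocities bounded in $L^\infty(0,T;\Spw)$; the $z$-interpolants bounded in $\mathrm{BV}([0,T];\Spz)\cap L^\infty((0,T){\times}\GC;[0,1])$ (and in $L^\infty(0,T;\SBV(\GC;\{0,1\}))$ if $\mathrm b>0$); and, reading off the discrete momentum balance, $\ddot u$ bounded in $L^2(0,T;\Spu^*)$. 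By Aubin--Lions--Simon-type compactness, compactness of the trace operator $H^1(\Omega{\setminus}\GC)\to L^2(\GC)$, Helly's selection principle for the $z$-interpolants, and $\SBV(\GC;\{0,1\})$-compactness with equibounded jump sets, I would extract a subsequence along which $u^\tau\to u$ in $\romanC^0([0,T];\Spw)\cap\romanC^0([0,T];\Spu_{\mathrm{weak}})$ with $\JUMP{u^\tau}\to\JUMP u$ strongly in $L^2((0,T){\times}\GC;\R^3)$, $\dot u^\tau\weakto\dot u$ in $L^2(0,T;\Spu)$, and $z^\tau(t)\to z(t)$ both for every $t\in[0,T]$ and a.e.\ on $\GC$.

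The passage to the limit in the discrete momentum balance to recover \eqref{weak-mom} is then routine: the bulk and inertial terms are linear and pass by weak convergence; the penalty term passes because $\alpha_\lambda$ is globally Lipschitz while the traces converge strongly in $L^2(\GC)$; and $\kappa z^\tau\JUMP{u^\tau}\to\kappa z\JUMP u$ by a weak--strong argument using $0\le z^\tau\le1$ and $z^\tau\to z$ a.e. The hard part will be the limit passage in the discrete semistability to obtain \eqref{semistab-z}, which requires constructing, for an arbitrary competitor $\tilde z$ with $0\le\tilde z\le z(t)$ (and $\tilde z\in\SBV(\GC;\{0,1\})$ if $\mathrm b>0$), a \emph{mutual recovery sequence} $\tilde z^\tau$ with $0\le\tilde z^\tau\le z^\tau(t)$ satisfying
\[
\limsup_{\tau\to0}\Big(\calE(t,u^\tau(t),\tilde z^\tau)+\calR(\tilde z^\tau{-}z^\tau(t))-\calE(t,u^\tau(t),z^\tau(t))\Big)
\le\calE(t,u(t),\tilde z)+\calR(\tilde z{-}z(t))-\calE(t,u(t),z(t)),
\]
after which the discrete semistability of $z^\tau(t)$ yields \eqref{semistab-z} in the limit (and, by \eqref{init-conds2}, also at $t=0$). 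For $\mathrm b=0$ I expect the rescaled choice $\tilde z^\tau:=\tilde z\,z^\tau(t)/z(t)$ (set to $0$ where $z(t)=0$) to work, using the quadratic structure of $\calJ$, the a.e.\ convergence $z^\tau(t)\to z(t)$, and dominated convergence; for $\mathrm b>0$ the perimeter term $\mathrm b\calG$ must be handled through the $\{0,1\}$-valued recovery constructions of \cite{RosTho12ABDM}, which rely precisely on $\tilde z\le z(t)$ and on all competitors being characteristic functions of finite-perimeter sets.

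Finally, the energy--dissipation inequality \eqref{enineq} would follow by taking the $\liminf$ in the discrete inequality: lower semicontinuity of $\calK$ and of $s\mapsto\int_0^s2\calV(\dot u(r))\dd r$ by convexity and weak convergence, of $\calE$ by Ioffe's theorem together with the $\SBV$-lower semicontinuity of $\calG$, and of $\Var_\calR$ by Helly, while the work $\int_0^{(\cdot)}\partial_t\calE$ of the external forces converges by \eqref{assRegf}--\eqref{hyp-w} and the weak-$*$ convergence of $u^\tau$ in $L^\infty(0,T;\Spu)$ against $\dot{\Bigf}\in L^1(0,T;\Spu^*)$. This already produces an $\SE$ solution; to upgrade \eqref{enineq} to the energy--dissipation \emph{balance} \eqref{endissbal}, hence to a \emph{Balanced} $\SE$ solution, I would use that, by the regularity \eqref{reg-u}, $\dot u$ is an admissible test function in \eqref{weak-mom}. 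Testing \eqref{weak-mom} by $\dot u$ on $[0,t]$ and rewriting via the chain rules $\tfrac{\dd}{\dd t}\calK(\dot u)=\langle\varrho\ddot u,\dot u\rangle_{H^1(\Omega{\setminus}\GC)}$, $\tfrac{\dd}{\dd t}\calH(u)=\int_{\GC}\alpha_\lambda(\JUMP u){\cdot}\JUMP{\dot u}\dd\Surf$, and an integration-by-parts formula for $\calJ$ (licit since $s\mapsto|\JUMP{u(s)}|^2\in\romanC^0([0,T];L^1(\GC))$ while $z$ is $\mathrm{BV}$ in time), one obtains an identity that, combined with the reverse inequality for the rate-independent part extracted from \eqref{semistab-z} by a Riemann-sum argument \`a la \textsc{Mielke--Theil} \cite{MieThe04RIHM} (testing with $\tilde z=z(s)$ along refining partitions), closes the inequality opposite to \eqref{enineq} and yields \eqref{endissbal}, with the initial conditions \eqref{init-conds} inherited from the construction.
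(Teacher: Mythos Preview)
The paper does not prove this theorem: it is quoted verbatim from \cite[Thm.\ 2.1]{RosThoBRI-INERTIA} and used as a black box. So there is no ``paper's own proof'' to compare against. Your proposal is a faithful outline of the standard argument that is in fact carried out in \cite{RosThoBRI-INERTIA} (time-incremental staggered scheme, a~priori estimates, compactness, limit passage in the momentum balance, mutual recovery sequences for semistability, lower semicontinuity for the energy--dissipation inequality, and the upgrade to the balance via testing \eqref{weak-mom} by $\dot u$ combined with a Riemann-sum argument from \eqref{semistab-z}). The paper itself invokes precisely these ingredients elsewhere: the mutual recovery sequence \eqref{MRS} from \cite{RosTho12ABDM} in Step~2 of the proof of Theorem~\ref{thm:pass-lim-nu}, and the Riemann-sum/test-by-$\dot u$ device in Step~5 of the proof of Theorem~\ref{mainthm-2} and in Remark~\ref{rmk:failure-ENID}.

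One small remark on your recovery sequence in the case $\mathrm b=0$: the multiplicative ansatz $\tilde z^\tau:=\tilde z\,z^\tau(t)/z(t)$ is not the construction used in this paper (nor in \cite{RosTho12ABDM}); the paper uses the ``truncation'' recovery sequence $\tilde z^\tau:=\tilde z\,\chi_{A^\tau}+z^\tau(t)(1{-}\chi_{A^\tau})$ with $A^\tau=\{\tilde z\le z^\tau(t)\}$, cf.\ \eqref{MRS}, which avoids the division by $z(t)$ and works uniformly for $\mathrm b\ge 0$. Your choice can be made to work as well, but the truncation is cleaner and is what the cited references rely on.
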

\begin{remark}[Encompassing non-interpenetration]
\label{rmk:Scala_Schimpi}
\upshape
In \cite{ScaSchi17}, the authors proved  the existence of global-in-time solutions to the Cauchy problem for an adhesive contact system with inertia and the non-interpenetration  constraint on the displacement variable. Their analysis relies on 
a novel formulation of the momentum balance, originally proposed in \cite{BoRoScaSchi17}, for which \emph{time-dependent}  test functions are chosen in a suitable ``parabolic'' space $\mathscr{V}$, consisting of the intersection of  Bochner-Sobolev spaces. 
In that setting,  the unilateral constraint  is rendered by means of a cyclically monotone operator with graph in $\mathscr{V}\times\mathscr{V}^*$. This weak formulation  of the momentum balance can be in fact re-stated in terms of  a variational inequality. Nonetheless,  it  allows for a clear identification, in the displacement equation, of the reaction force due to the  non-interpenetration constraint in terms of the abovementioned maximal monotone operator.
\par
We believe that this approach could be adopted in the present setup, as well. 
Nevertheless, we have chosen not to detail it to avoid overburdening the exposition of the dimensional reduction analysis. 
\end{remark}
\subsection{From the damped to the undamped system}
We now address the limit passage in the notion of $\SE$  solutions when the  viscosity tensor
is of the form 
\begin{equation}
\label{viscosity-tensor-nu}
\bbD=  \bbD_\mu = \mu \overline{\bbD} \text{ with } \mu \downarrow 0 \text{ and } \overline{\bbD} \text{ a fixed viscosity tensor as in \eqref{viscosityTensor}}.
\end{equation}
Accordingly, we will denote by $\calV_{\mu}$ the associated dissipation potentials, and by $(u_\mu, z_\mu)_{\mu}$ a family of solutions to the damped adhesive contact system.
 We have the following result; we stress that it is valid both for the case with the perimeter regularization  (i.e., with $\mathrm{b}>0$), and without. 
  In the undamped limit, we will only be able to obtain an energy-dissipation inequality, see  Remark \ref{rmk:failure-ENID}  ahead. 
  \RMA Our result encompasses both the case in which inertia is present in the momentum balance ($\varrho>0$), and the quasistatic case $\varrho=0$. \EEE
\begin{theorem}
\label{thm:pass-lim-nu}
Let $\mathrm{b}\geq 0$ be fixed. 
Let $({\mu}_j)_j$ be a null sequence
and, correspondingly, let  $(u_{{\mu}_j}, z_{{\mu}_j})_j$ be a sequence of  Balanced   $\SE$ solutions to the adhesive contact systems
 $(\mathcal{K}, \mathcal{V}_{{\mu}_j}, \mathcal{R}, \mathcal{E})$,  with viscosity tensors $(\bbD_{{\mu}_j})_j$ as in \eqref{viscosity-tensor-nu}, and supplemented with initial data $(u_0,\dot{u}_0, z_0)$ as in \eqref{initial-data}  and fulfilling \eqref{init-conds2}. \EEE Then, 
there exist a (not relabeled) subsequence and a pair
$(u,z) $, with   
\[
\begin{cases}
\displaystyle u\in  L^\infty(0,T;\Spu), 
\\
\displaystyle \RMA \varrho u \EEE  \in 
 W^{1,\infty} (0,T; L^2(\Omega;\R^3)) \cap  H^{2} \EEE (0,T;\Spu^*),
 \end{cases}
\]
and $z$ as in \eqref{reg-z}, 
such that 
\begin{enumerate}
\item
the following convergences hold
\begin{subequations}
\label{convs-nuj-uz}
\begin{align}
&
u_{{\mu}_j} \weaksto u && \text{in } L^\infty(0,T;\Spu),
\\
& \RMA \varrho u_{{\mu}_j} \weaksto \varrho u  \EEE
&& \text{in } 
 W^{1,\infty} (0,T; L^2(\Omega;\R^3)) \cap H^2(0,T;\Spu^*),
\\
&
z_{{\mu}_j}(t) \weaksto z(t) && \text{in } 
\begin{cases}
L^\infty(\GC)  & 
\\
\SBV(\GC;\{0,1\}) \subset L^\infty(\GC)  & \text{if } \mathrm{b}>0
\end{cases} \quad 
\text{ for all } t \in [0,T];
\end{align}
\end{subequations}
\item $(u,z)$
is  a $\SE$ solution of the undamped system for adhesive contact, namely it fulfills
\begin{itemize}
\item[-] the momentum balance equation for almost all $t\in (0,T)$ and for every $v \in \Spu$ 
\begin{equation} 
\label{weak-mom-undamped}
\begin{aligned}
&
 \langle \varrho \ddot{u}(t), v \rangle_{H^1(\Omega{\setminus}\GC)} 
+ \int_{\Omega{\setminus}\GC}  \mathbb{C}e(u(t)):e(v)  \,\mathrm{d}x
+\int_{\GC}\alpha_\lambda(\JUMP{u})\cdot \JUMP{v} \dd \Surf(x)
+\int_{\GC} kz\JUMP{u}\JUMP{v}\,\mathrm{d}\Surf (x)
\\
&
=  \int_\Omega f(t) v \dd x  -\int_{\Omega\setminus \GC} \bbC e(w(t)) : e(v) \dd x 
- \int_\Omega\varrho  \ddot{w}(t) v \dd x; 
\end{aligned}
\end{equation}
\item[-] the semistability condition \eqref{semistab-z} (equivalently, \eqref{reduced-semistab});
\item[-] the energy-dissipation inequality for all $t\in [0,T]$ 
 \begin{equation}
 \label{enineq-undamped}
 \begin{aligned}
 &
\calK(\dot u(t)) 
+\Var_{\calR}(z, [0,t])+
 \calE(t,u(t),z(t))
 \\
 &
\leq \calK(\dot u(0))
+  \calE(0,u(0),z(0))
\\
&  \quad  -\int_0^t \int_\Omega \dot{f}(s) u(s) \dd x \dd s
+\int_0^t \int_{\Omega{\setminus}\GC} \bbC e(\dot{w}(s)): e(u(s)) \dd x  \dd s
+\int_0^t \int_\Omega\varrho  \dddot{w}(s) u (s) \dd x \dd s  \,.
 \end{aligned}
 \end{equation}
 \end{itemize}
\end{enumerate}
\end{theorem}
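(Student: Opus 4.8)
The plan is to follow the standard scheme for vanishing-viscosity limits in rate-independent and mixed-type systems: first derive uniform a priori bounds from the energy-dissipation balance \eqref{endissbal} written for each $(u_{\nu_j},z_{\nu_j})$, then extract weakly-$*$ converging subsequences, pass to the limit in each of the three ingredients (momentum balance, semistability, energy inequality), and finally verify that the limit data are preserved. First I would test \eqref{endissbal} for the solutions $(u_{\nu_j},z_{\nu_j})$: since the initial data $(u_0,\dot u_0,z_0)$ are independent of $\nu_j$, the right-hand side is bounded uniformly in $j$, using \eqref{assRegf}, \eqref{hyp-w}, the coercivity \eqref{assCD} of $\bbC$, the non-negativity of $\calH$, $\calJ$, $\calG$, and a Gronwall argument (in the spirit of estimate \eqref{Gronwall-estimate} alluded to in the text) to absorb the $\partial_t\calE$ term. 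This yields $(u_{\nu_j})_j$ bounded in $L^\infty(0,T;\Spu)\cap W^{1,\infty}(0,T;L^2(\Omega;\R^3))$, $(z_{\nu_j})_j$ bounded in $L^\infty((0,T)\times\GC;[0,1])\cap \BV([0,T];\Spz)$ (and in $L^\infty(0,T;\SBV(\GC;\{0,1\}))$ when $\mathrm{b}>0$), plus the crucial bound $\nu_j\int_0^T\|e(\dot u_{\nu_j})\|_{L^2}^2\,\d s\leq C$, so that the viscous dissipation term $\int_0^t 2\calV_{\nu_j}(\dot u_{\nu_j})\,\d s=\nu_j\int_0^t\int_{\Omega\setminus\GC}\overline{\bbD}e(\dot u_{\nu_j}){:}e(\dot u_{\nu_j})\,\d x\,\d s\to 0$. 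From the momentum balance \eqref{weak-mom} tested by $v\in\Spu$, rearranging and using that $\|\nu_j\overline{\bbD}e(\dot u_{\nu_j})\|_{L^2}\le \sqrt{\nu_j}\sqrt{\nu_j}\|\overline{\bbD}e(\dot u_{\nu_j})\|_{L^2}\to 0$ in $L^2(0,T;L^2)$, I obtain a bound on $\ddot u_{\nu_j}$ in $L^2(0,T;\Spu^*)$ — in fact the argument upgrades this to an $H^2(0,T;\Spu^*)$ bound since $\dot u_{\nu_j}$ is controlled there too. By Banach–Alaoglu and Aubin–Lions–Simon type compactness (plus a Helly-type selection principle for $z_{\nu_j}$, exploiting the monotonicity $\dot z\le 0$), I extract the subsequence and limit $(u,z)$ with the convergences \eqref{convs-nuj-uz}; strong convergence $u_{\nu_j}(t)\to u(t)$ in $\Spu$-compact embeddings and $\JUMP{u_{\nu_j}(t)}\to\JUMP{u(t)}$ in $L^2(\GC;\R^3)$ (for a.e.\ $t$, then for all $t$ by continuity arguments) follows from the interpolation $\Spu\hookrightarrow\hookrightarrow L^2\hookrightarrow \Spu^*$ together with the $W^{1,\infty}(0,T;L^2)$ bound.

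Next I pass to the limit in the three conditions. For the momentum balance \eqref{weak-mom-undamped}: the viscous term vanishes as noted; the elastic term $\int_{\Omega\setminus\GC}\bbC e(u_{\nu_j}(t)){:}e(v)\,\d x$ passes by weak convergence $e(u_{\nu_j}(t))\weakto e(u(t))$; the inertial term converges testing against the $\ddot u_{\nu_j}\weaksto\ddot u$ convergence in $L^2(0,T;\Spu^*)$; the penalty term $\int_{\GC}\alpha_\lambda(\JUMP{u_{\nu_j}})\cdot\JUMP{v}\,\d\Surf$ converges because $\alpha_\lambda$ is Lipschitz and $\JUMP{u_{\nu_j}}\to\JUMP{u}$ strongly in $L^2(\GC)$; the adhesion term $\int_{\GC}\kappa z_{\nu_j}\JUMP{u_{\nu_j}}\cdot\JUMP{v}\,\d\Surf$ converges using the strong $L^2$-convergence of $\JUMP{u_{\nu_j}(t)}$ against the weak-$*$ $L^\infty$-convergence of $z_{\nu_j}(t)$; the right-hand side converges by \eqref{assRegf}, \eqref{hyp-w}. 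This has to be done for a.e.\ $t$; a standard argument integrating against test functions in time and using density handles the null-set ambiguity. For the semistability condition \eqref{semistab-z}: this is the delicate point and I would argue pointwise in $t$ via the mutual recovery sequence technique of Mielke–Roubíček. Given the limit $z(t)$ and a competitor $\tilde z$, I must construct $\tilde z_{\nu_j}$ (for the already-fixed approximating sequence $z_{\nu_j}(t)$) so that $\tilde z_{\nu_j}\to\tilde z$ and $\limsup_j\big[\calE(t,u_{\nu_j}(t),\tilde z_{\nu_j})-\calE(t,u_{\nu_j}(t),z_{\nu_j}(t))+\calR(\tilde z_{\nu_j}-z_{\nu_j}(t))\big]\le \calE(t,u(t),\tilde z)-\calE(t,u(t),z(t))+\calR(\tilde z-z(t))$; in the case $\mathrm{b}=0$ one can take $\tilde z_{\nu_j}:=\tilde z\wedge z_{\nu_j}(t)$ (using the reduced form \eqref{reduced-semistab} which already restricts to competitors below $z(t)$), while for $\mathrm{b}>0$ the more intricate $\SBV$-recovery construction of \cite{RosTho12ABDM} applies, controlling the perimeter term. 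The strong $L^2(\GC)$-convergence of $\JUMP{u_{\nu_j}(t)}$ is exactly what makes the quadratic term $\int_{\GC}\tfrac\kappa2 z Q(\JUMP{u})$ pass to the limit here.

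Finally, for the energy-dissipation inequality \eqref{enineq-undamped}: I start from \eqref{endissbal} (or its explicit form \eqref{enineq-explicit}) for $(u_{\nu_j},z_{\nu_j})$, drop the non-negative viscous term $\int_0^t 2\calV_{\nu_j}(\dot u_{\nu_j})\,\d s\ge 0$ on the left (or, more carefully, use that it converges to zero), and pass to the $\liminf$ on the left / $\lim$ on the right. Lower semicontinuity gives $\calK(\dot u(t))\le\liminf_j\calK(\dot u_{\nu_j}(t))$ (weak $L^2$ lsc of the norm), $\Var_{\calR}(z,[0,t])\le\liminf_j\Var_{\calR}(z_{\nu_j},[0,t])$ (this is just $a_1\int_{\GC}(z(0)-z(t))\,\d\Surf$ since $z$ is monotone, and $z_{\nu_j}(t)\weaksto z(t)$), and $\calE(t,u(t),z(t))\le\liminf_j\calE(t,u_{\nu_j}(t),z_{\nu_j}(t))$ by weak lsc of $\calE_\bulk$, convexity/lsc of $\calH$, lsc of $\calG$, and the strong-weak convergence pairing in $\calJ$; the initial terms converge since the data are fixed; and the work term $\int_0^t\partial_t\calE(s,u_{\nu_j}(s),z_{\nu_j}(s))\,\d s$ converges by dominated convergence using \eqref{assRegf}, \eqref{hyp-w} and the pointwise weak convergence of $u_{\nu_j}(s)$. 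Passing to the initial conditions \eqref{init-conds}: $u(0)=u_0$ and $z(0)=z_0$ follow from the pointwise-in-time convergences at $t=0$, and $\dot u(0)=\dot u_0$ follows from $\dot u_{\nu_j}(0)=\dot u_0$ together with weak continuity of $\dot u$ (as an element of $\romanC^0([0,T];L^2_{\weak})$, using $\ddot u\in L^2(0,T;\Spu^*)$). I expect the \textbf{main obstacle} to be the passage to the limit in the semistability condition — specifically the mutual recovery sequence construction when $\mathrm{b}>0$, where the $\SBV(\GC;\{0,1\})$ setting requires the careful perimeter-controlled construction borrowed from \cite{RosTho12ABDM}; by contrast, the energy inequality and momentum balance passages are essentially routine compactness-and-lsc arguments, the only subtlety being that we genuinely lose the energy \emph{balance} and must content ourselves with an inequality, precisely because we cannot test the undamped momentum balance with $\dot u$ (which need not have enough regularity in the absence of the damping term).
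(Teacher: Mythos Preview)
Your proposal is correct and follows essentially the same route as the paper: uniform a priori estimates from the energy balance via Gronwall (the paper packages these in Lemma~\ref{l:properties-en} and Proposition~\ref{prop:2.5}), compactness by weak-$*$ plus Helly plus an Ascoli--Arzel\`a argument giving $u_{\nu_j}\to u$ in $\mathrm{C}^0([0,T];\Spu_{\mathrm{weak}})$, limit passage in the integrated momentum balance, the mutual recovery sequence $\tilde z_j=\tilde z\wedge z_{\nu_j}(t)$ (which is exactly the paper's construction \eqref{MRS}, with the perimeter argument borrowed from \cite{RosTho12ABDM} when $\mathrm{b}>0$), and lower semicontinuity for the energy inequality. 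One small slip: the bound $\nu_j\int_0^T\|e(\dot u_{\nu_j})\|_{L^2}^2\,\d s\le C$ only gives \emph{boundedness} of the viscous dissipation, not that it tends to zero---but this is harmless, since you correctly observe later that it can simply be dropped from the left-hand side by non-negativity.
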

\par In order to prove   Thm.\ \ref{thm:pass-lim-nu}  we will first of all derive a series of a priori estimates on the sequences $(u_{{\mu}_j}, z_{{\mu}_j})_j$, relying on the following coercivity and power-control estimates for $\calE$. 
\begin{lemma}
\label{l:properties-en}
Assume \eqref{assdata}.
Then,
\begin{subequations}
\label{est-lemma}
\begin{align}
&
\label{est-coer-E}
\begin{aligned}
& 
\exists\, c_0,\, C_0>0 \ \forall\, (t,u,z) \in [0,T]\times \Spu \times \Spz\, :  
\\
 & \qquad \qquad 
  \calE(t,u,z) \geq c_0 \left(\|u\|_{\Spvr}^2 {+} \| z \|_{L^\infty(\GC)} +\mathrm{b}  \|z\|_{\SBV(\GC)} \right) -C_0,
  \end{aligned}
\\
&
\label{Gronwall-estimate}
\begin{aligned}
&
 \exists\, L_0 \in L^1(0,T)   \ \forall\, (u,z) \in \Spu \times \Spz\, {\text {and for a.a. }} t\in (0,T): 
 \\
 &   |\partial_t \calE (t,u,z)| \leq |L_0(t)|  \left( \calE(t,u,z){+}1\right)\,.
 \end{aligned}
\end{align}
\end{subequations}
\end{lemma}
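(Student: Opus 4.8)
The plan is to prove the two estimates separately, exploiting the decomposition $\calE(t,u,z) = \calE_\bulk(t,u) + \calE_\surf(u,z)$ of \eqref{bulk-surface-contribution} and the fact that only the bulk part carries the time dependence, through the load $\Bigf$. For the coercivity estimate \eqref{est-coer-E}: the inequality is trivial when $\calE_\surf(u,z)=+\infty$, so I may assume $0\le z\le 1$ a.e.\ on $\GC$ (and $z\in\SBV(\GC;\{0,1\})$ if $\mathrm{b}>0$). For the bulk term I would use the positive-definiteness of $\bbC$ from \eqref{assCD} to get $\int_{\Omega\setminus\GC}\tfrac12\bbC e(u){:}e(u)\dd x\ge \tfrac{C_\bbC^1}{2}\|e(u)\|_{L^2}^2$, together with the Korn--Poincar\'e inequality $\|u\|_{H^1}^2\le c_K^{-1}\|e(u)\|_{L^2}^2$ on $\Spu$ (discussed below). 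Because of the time regularity \eqref{assRegf}--\eqref{hyp-w} one has $\Bigf\in \romanC^0([0,T];\Spu^*)$, hence $\sup_{[0,T]}\|\Bigf(t)\|_{\Spu^*}=:M<\infty$, so that $|\langle\Bigf(t),u\rangle|\le M\|u\|_{H^1}\le \tfrac{C_\bbC^1 c_K}{4}\|u\|_{H^1}^2+C(M)$ by Young's inequality, giving $\calE_\bulk(t,u)\ge \tfrac{C_\bbC^1 c_K}{4}\|u\|_{H^1}^2-C(M)$. For the surface term I would simply note that $\calH(u)\ge0$, that $\calJ(u,z)=\int_{\GC}\tfrac\kappa2 z|\JUMP{u}|^2\dd\Surf(x)\ge0$ since $z\ge0$, that $\int_{\GC}(I_{[0,1]}(z){-}a_0 z)\dd\Surf(x)\ge -a_0\Surf(\GC)$ while $\|z\|_{L^\infty(\GC)}\le1$ renders the corresponding term in \eqref{est-coer-E} harmless, and finally that $\mathrm{b}\calG(z)=\mathrm{b}\mathcal{H}^1(\mathrm{J}_z)\ge \mathrm{b}\big(\|z\|_{\SBV(\GC)}-\Surf(\GC)\big)$, because for a $\{0,1\}$-valued $z$ one has $|Dz|(\GC)=\mathcal{H}^1(\mathrm{J}_z)$ and $\|z\|_{L^1(\GC)}\le\Surf(\GC)$. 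Collecting these bounds and taking $c_0\le\min\{1,\tfrac{C_\bbC^1 c_K}{4}\}$ yields \eqref{est-coer-E}.

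For the power-control estimate \eqref{Gronwall-estimate}: since $\calE_\surf$ is independent of $t$, differentiating \eqref{defEk}--\eqref{BigF} in time (as in the ``explicit energy-dissipation balance'' computation above) gives, for a.a.\ $t\in(0,T)$,
\[
\partial_t\calE(t,u,z)=-\int_\Omega\dot f(t) u\dd x+\int_{\Omega\setminus\GC}\bbC e(\dot w(t)){:}e(u)\dd x+\int_{\Omega\setminus\GC}\bbD e(\ddot w(t)){:}e(u)\dd x+\varrho\int_\Omega\dddot w(t) u\dd x,
\]
so that $|\partial_t\calE(t,u,z)|\le \ell(t)\|u\|_{H^1}$ with $\ell(t):=\|\dot f(t)\|_{L^2}+C\|\dot w(t)\|_{H^1}+C\|\ddot w(t)\|_{H^1}+\varrho\|\dddot w(t)\|_{L^2}$ (the constant $C$ depending on $\bbC,\bbD$), which belongs to $L^1(0,T)$ by \eqref{assRegf}--\eqref{hyp-w}. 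It then remains to control $\|u\|_{H^1}$ by $\calE$: from the just-proven \eqref{est-coer-E} one has $\|u\|_{H^1}^2\le c_0^{-1}(\calE(t,u,z)+C_0)$, whence $\|u\|_{H^1}\le C(\calE(t,u,z)+1)$ by Young's inequality (possibly after replacing $\calE$ by $\calE+\mathrm{const}$, which leaves the system $(\calK,\calV,\calR,\calE)$ unchanged). This gives \eqref{Gronwall-estimate} with $L_0:=C\ell\in L^1(0,T)$.

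I expect no serious difficulty here: the statement is the standard coercivity/power-control pair of the theory of rate-independent systems, and its proof is essentially bookkeeping. The one genuine input is the Korn--Poincar\'e inequality on the cracked domain $\Omega\setminus\GC$ used in the bulk estimate, which I would deduce from the geometric hypotheses \eqref{ass-domain}: since $\barGD\cap\barGC=\emptyset$ and $\mathcal{H}^2(\GD\cap\barOmegaone)>0$, $\mathcal{H}^2(\GD\cap\barOmegatwo)>0$, the sets $\GD\cap\overline{\Omega}_{\pm}$ are subsets of $\partial\Omega_{\pm}$ of positive surface measure; applying Korn's second inequality on each of the Lipschitz domains $\Omegaone$, $\Omegatwo$ to functions vanishing there, and summing, yields $\|u\|_{H^1(\Omega\setminus\GC)}\le c_K^{-1/2}\|e(u)\|_{L^2(\Omega\setminus\GC)}$ for every $u\in\Spu$.
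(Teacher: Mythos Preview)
Your proposal is correct and follows essentially the same route as the paper's proof: bound $\langle\Bigf(t),u\rangle$ uniformly in $t$ via \eqref{assRegf}--\eqref{hyp-w}, apply positive-definiteness of $\bbC$ and Korn's inequality for the bulk coercivity, use the constraint $z\in[0,1]$ for the surface part, and then deduce \eqref{Gronwall-estimate} from the explicit formula for $\partial_t\calE$ combined with \eqref{est-coer-E}. Your additional justification of Korn--Poincar\'e on the cracked domain and the passage from $\sqrt{\calE+C_0}$ to $\calE+1$ are welcome details that the paper leaves implicit.
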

\begin{proof}
It follows from  \eqref{assRegf}  and \eqref{hyp-w} that 
\[
\exists\, C_F>0 \ \forall\, (t,u) \in [0,T]\times \Spu \, : \quad |\pairing{}{\Spvr}{\Bigf(t)}{u} | \leq C_F \|u \|_{\Spvr}\,.
\]
Therefore, 
by the positive-definiteness of $\bbC$, Korn's inequality, and the fact that, if  $\calE(t,u,z)<\infty$ then $z\in [0,1]$  a.e.\ in $\GC$,  we find that 
\[
 \calE(t,u,z) \geq c \|u\|_{\Spvr}^2 - C_F \|u \|_{\Spvr}+ \| z \|_{L^\infty(\GC)} +\mathrm{b}  \|z\|_{\SBV(\GC)} -a_0|\GC|  -1 -\mathrm{b}, \EEE
\]
whence we immediately deduce \eqref{est-coer-E}.
\par
Then, \eqref{Gronwall-estimate} follows upon taking into account that  for almost all $t\in (0,T)$ and for every \EEE $(u,z) \in \Spu \times \Spz$
\[
|\partial_t \calE(t,u,z)| \leq C\left(\|\dot{f}(t)\|_{L^2(\Omega)}{+} \|\dot{w}(t)\|_{H^1(\Omega)}{+} \|\ddot{w}(t)\|_{H^1(\Omega)}  {+} \|\dddot{w}(t)\|_{L^2(\Omega)}\right)
\|u \|_{\Spvr}\,.
\] 
\end{proof}
We are now in a position to derive a series of a priori estimates on the solutions  $(u_{{\mu}_j}, z_{{\mu}_j})_j$.
\begin{proposition}
\label{prop:2.5}
Assume \eqref{assdata}. Then,
there exists a constant $C>0$ such that  the following estimates hold for all $j\in \N$
 \begin{subequations}
 \label{est-nu}
 \begin{align}
 &
 \label{est-nu-u-1}
 \| \RMA \varrho \EEE  \dot{u}_{{\mu}_j}\|_{L^\infty(0,T;L^2(\Omega;\R^3))} +\mu_j^{1/2} \|e(\dot{u}_{{\mu}_j})\|_{L^2(0,T;L^2(\Omega;\R^{3\times3}))} +
 \|u_{{\mu}_j}\|_{L^\infty (0,T; \Spv)} \leq C,
 \\
 &
  \label{est-nu-u-2}
 \| \RMA \varrho \EEE \ddot{u}_{{\mu}_j}\|_{L^2(0,T;\Spv^*)} \leq C,
 \\
 & 
  \label{est-nu-z-1}
 \|z_{{\mu}_j}\|_{L^\infty((0,T)\times \GC) \cap \BV([0,T];L^1(\GC))} \leq C,
 \\
 & 
  \label{est-nu-z-2}
 \|z_{{\mu}_j}\|_{L^\infty(0,T;\SBV(\GC))} \leq C \qquad\text{if } \mathrm{b}>0.
 \end{align}
 \end{subequations}
\end{proposition}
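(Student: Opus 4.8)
The plan is to derive the a priori estimates \eqref{est-nu} from the energy-dissipation \emph{balance} \eqref{endissbal} satisfied by each Balanced $\SE$ solution $(u_{\nu_j}, z_{\nu_j})$, using a Gronwall argument fuelled by Lemma \ref{l:properties-en}. First I would fix $j$ and write \eqref{endissbal} for the system $(\calK, \calV_{\nu_j}, \calR, \calE)$; since $\calV_{\nu_j}(\dot u) = \nu_j \calV_{\overline{\bbD}}(\dot u) \geq 0$ and $\Var_{\calR} \geq 0$, I may drop the dissipation terms on the left-hand side when it is convenient to isolate $\calK(\dot u_{\nu_j}(t)) + \calE(t, u_{\nu_j}(t), z_{\nu_j}(t))$, or keep them to get their bound afterwards. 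Using the power-control estimate \eqref{Gronwall-estimate}, the right-hand side of \eqref{endissbal} is bounded by
\[
\calK(\dot u_0) + \calE(0, u_0, z_0) + \int_0^t |L_0(s)| \big( \calE(s, u_{\nu_j}(s), z_{\nu_j}(s)) + 1 \big) \dd s,
\]
and since $\calK \geq 0$ and, by the coercivity estimate \eqref{est-coer-E}, $\calE(s, u_{\nu_j}(s), z_{\nu_j}(s)) + C_0 \geq 0$, the quantity $\mathcal{Y}_j(t) := \calK(\dot u_{\nu_j}(t)) + \calE(t, u_{\nu_j}(t), z_{\nu_j}(t)) + C_0$ is nonnegative and satisfies $\mathcal{Y}_j(t) \leq \mathcal{Y}_j(0) + C + \int_0^t |L_0(s)|(\mathcal{Y}_j(s) + 1)\dd s$. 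Here $\mathcal{Y}_j(0)$ is bounded uniformly in $j$ because the initial data $(u_0, \dot u_0, z_0)$ are fixed and, by assumption, $\calE(0, u_0, z_0) < \infty$; note this uses $u_0 \in H^1_{\GD}$, $\dot u_0 \in L^2(\Omega;\R^3)$ and $z_0$ admissible (with $z_0 \in \SBV(\GC;\{0,1\})$ when $\mathrm{b}>0$). Since $L_0 \in L^1(0,T)$, Gronwall's lemma yields $\sup_j \sup_{t \in [0,T]} \mathcal{Y}_j(t) \leq C$.

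Once $\mathcal{Y}_j$ is bounded uniformly, I would unpack the constituent bounds. From $\calK(\dot u_{\nu_j}(t)) = \tfrac{\varrho}{2}\|\dot u_{\nu_j}(t)\|_{L^2}^2 \leq C$ uniformly in $t$ and $j$ I get the first term in \eqref{est-nu-u-1}; from the coercivity estimate \eqref{est-coer-E} applied to $\calE(t, u_{\nu_j}(t), z_{\nu_j}(t)) \leq C$ I get $\|u_{\nu_j}\|_{L^\infty(0,T;\Spv)} \leq C$, $\|z_{\nu_j}\|_{L^\infty((0,T)\times\GC)} \leq C$ (indeed $0 \leq z_{\nu_j} \leq 1$), and $\mathrm{b}\|z_{\nu_j}\|_{L^\infty(0,T;\SBV(\GC))} \leq C$, which gives \eqref{est-nu-z-2} when $\mathrm{b}>0$. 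For the viscous term $\nu_j^{1/2}\|e(\dot u_{\nu_j})\|_{L^2(0,T;L^2)}$ in \eqref{est-nu-u-1}, I go back to the full balance \eqref{endissbal}: keeping $\int_0^t 2\calV_{\nu_j}(\dot u_{\nu_j}(s))\dd s = \nu_j \int_0^t \int_{\Omega\backslash\GC} \overline{\bbD} e(\dot u_{\nu_j}):e(\dot u_{\nu_j}) \dd x \dd s \geq \nu_j c_{\overline{\bbD}} \|e(\dot u_{\nu_j})\|_{L^2(0,t;L^2)}^2$ on the left and bounding the right-hand side as above (now also controlled by the already-established bound on $\calE$), I obtain $\nu_j \|e(\dot u_{\nu_j})\|_{L^2(0,T;L^2)}^2 \leq C$, hence the claimed $\nu_j^{1/2}$-weighted bound. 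Similarly, keeping $\Var_{\calR}(z_{\nu_j},[0,T]) = a_1 \int_{\GC}(z_{\nu_j}(0) - z_{\nu_j}(T))\dd\Surf \leq a_1 |\GC|$ on the left (or simply using $0 \leq z_{\nu_j} \leq 1$ and monotonicity in time, which follows from $\dot z \leq 0$ enforced by $\calR$), I get the $\BV([0,T];L^1(\GC))$ bound in \eqref{est-nu-z-1}.

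Finally, for the second-derivative estimate \eqref{est-nu-u-2} I would test the weak momentum balance \eqref{weak-mom} (written for $\bbD = \bbD_{\nu_j}$) with an arbitrary $v \in H^1_{\GD}(\Omega\backslash\GC;\R^3)$ and estimate each term in the dual norm: the elastic term $\int \bbC e(u_{\nu_j}):e(v)$ is controlled by $\|u_{\nu_j}\|_{H^1}\|v\|_{H^1} \leq C\|v\|_{H^1}$ by \eqref{est-nu-u-1}; the penalty term $\int_{\GC}\alpha_\lambda(\JUMP{u_{\nu_j}})\cdot\JUMP{v}$ is controlled using the Lipschitz continuity of $\alpha_\lambda$ (with $\alpha_\lambda(0) = 0$), trace estimates and again the $H^1$-bound on $u_{\nu_j}$; the adhesive term $\kappa\int_{\GC} z_{\nu_j}\JUMP{u_{\nu_j}}\JUMP{v}$ is handled via $0 \leq z_{\nu_j}\leq 1$, the trace theorem and the $H^1$-bound; and the right-hand side forcing terms are controlled by \eqref{assRegf}--\eqref{hyp-w}. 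The only term with an explicit $\nu_j$ is the viscous one $\nu_j\int \overline{\bbD} e(\dot u_{\nu_j}):e(v)$, which is bounded by $\nu_j \|e(\dot u_{\nu_j})\|_{L^2}\|v\|_{H^1}$; integrating in time and using the Cauchy--Schwarz inequality together with the already-established bound $\nu_j\|e(\dot u_{\nu_j})\|_{L^2(0,T;L^2)}^2 \leq C$ gives $\nu_j \|e(\dot u_{\nu_j})\|_{L^2(0,T;L^2)} = \nu_j^{1/2}\cdot(\nu_j^{1/2}\|e(\dot u_{\nu_j})\|_{L^2(0,T;L^2)}) \leq \nu_j^{1/2} C \to 0$, so this term is in fact uniformly bounded in $L^2(0,T;\Spv^*)$. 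Collecting these bounds and taking the supremum over $\|v\|_{H^1}\leq 1$ yields $\|\varrho\ddot u_{\nu_j}(t)\|_{\Spv^*} \leq g_j(t)$ with $\|g_j\|_{L^2(0,T)} \leq C$, i.e. \eqref{est-nu-u-2}. The main obstacle I anticipate is purely bookkeeping: making sure the Gronwall argument is run on the correct nonnegative quantity (adding the constant $C_0$ from \eqref{est-coer-E}) and that the viscous contribution is kept or dropped consistently depending on which estimate is being extracted; no genuinely new analytical difficulty arises beyond what Lemma \ref{l:properties-en} already packages.
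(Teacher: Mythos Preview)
Your proposal is correct and follows essentially the same route as the paper: run a Gronwall argument on the energy-dissipation balance \eqref{endissbal} using Lemma~\ref{l:properties-en}, then read off the bounds on $u_{\nu_j}$, $\dot u_{\nu_j}$, $z_{\nu_j}$ from coercivity, and finally obtain \eqref{est-nu-u-2} by comparison in the weak momentum balance. The only cosmetic difference is that the paper applies Gronwall to $\calE$ alone (dropping $\calK$ first and recovering the kinetic and viscous bounds afterwards), whereas you bundle $\calK+\calE$ into a single quantity $\mathcal{Y}_j$; both work equally well.
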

\begin{proof}
From the energy \emph{balance} \eqref{endissbal}, also taking into account \eqref{initial-data}, we derive that
\[
 \calE(t,u_{{\mu}_j}(t),z_{{\mu}_j}(t))
\leq C + \int_0^t \partial_t\calE(s,u_{{\mu}_j}(s),z_{{\mu}_j}(s))\,\mathrm{d}s  \leq C + \int_0^t  |L_0(s)|  \left( \calE(s,u_{{\mu}_j}(s),z_{{\mu}_j}(s)){+}1\right) \dd s \,.
\]
Then, via the Gronwall Lemma we obtain that 
\begin{equation}
\label{energy-bound-nu}
\sup_{t\in [0,T]} |\calE(t,u_{{\mu}_j}(t),z_{{\mu}_j}(t))|\leq C.
\end{equation}
On account of \eqref{est-coer-E},
we then infer the estimate for  $\|u_{{\mu}_j}\|_{L^\infty (0,T; \Spv)} $ as well as estimates \eqref{est-nu-z-1} \& \eqref{est-nu-z-2}
(indeed, since $z_{{\mu}_j}(\cdot,x)$ is non-increasing,  $\|z_{{\mu}_j}\|_{\BV([0,T];L^1(\GC))} = \|z_{{\mu}_j}(T) {-}z_{{\mu}_j}(0)\|_{L^1(\GC)}$).  
Combining \eqref{energy-bound-nu} and  \eqref{Gronwall-estimate} we infer that 
\[
\left|   \int_0^T \partial_t\calE(t,u_{{\mu}_j}(t),z_{{\mu}_j}(t))\,\mathrm{d}t   \right|\leq C,
\]
hence \eqref{endissbal} yields that
\[
\sup_{t\in (0,T)}\|\RMA \varrho \EEE  \dot{u}_{{\mu}_j}(t)\|_{L^2(\Omega)}^2  
+ \int_0^T   2\calV_{\mu_j}(\dot{u}_{\mu_j}(t)) \EEE \dd t \leq C,
\]
whence the first two bounds in \eqref{est-nu-u-1},  in view of \eqref{viscosity-tensor-nu}.\EEE
\par
 Finally, \eqref{est-nu-u-2} follows from a comparison in the momentum balance \eqref{weak-mom}, 
taking into account 
the previously obtained \eqref{est-nu-u-1} and \eqref{est-nu-z-1}, as well as 
\eqref{assRegf} and \eqref{hyp-w}.
\end{proof}
\noindent
We can now carry out the \underline{\textbf{proof of Theorem \ref{thm:pass-lim-nu}}}. \RMA To avoid overburdening the 
exposition, from now on we shall suppose that inertia contributes to the momentum balance. In the case without inertia, $\rho=0$, the proof can be adapted by suitably modifying the 
compactness arguments for the displacement variable: without entering into details, we may only mention that, after taking the limit in the momentum balance, the weak convergence in $ L^\infty(0,T; \Spv) $ improves to a strong one, with an argument similar to that in  Step $3$ of the proof of  Theorem \ref{mainthm-1}. \EEE
\par
In the case $\varrho>0$, the proof will be 
split in the following steps:
\begin{itemize}
\item[\textbf{Step $0$: compactness.}]
Resorting to the  
compactness results from 
\cite{Simon87}   as well as to Helly's theorem,
from estimates \eqref{est-nu} we gather that there exist $u :[0,T] \to \Spv $ and $z: [0,T]
\to L^\infty(\GC)$ such that, up to a not relabeled subsequence, the following convergences hold as $j\to \infty$
\begin{subequations}
\label{convs-nu}
\begin{align}
&
\label{convs-nu-1}
u_{{\mu}_j} \weaksto u && \text{in } L^\infty(0,T; \Spv) \cap W^{1,\infty} (0,T;L^2(\Omega;\R^3)) \cap H^2(0,T;\Spv^*),
\\
&
\label{convs-nu-3}
{\mu}_j  e(\dot{u}_{{\mu}_j}) \to 0 && \text{in } {L^2(0,T;L^2(\Omega;\R^{3\times3}))},
\\
&
\label{convs-nu-5}
  z_{{\mu}_j}(t) \weaksto z(t) && \text{in } L^\infty(\GC) \  \text{ (and in  $\SBV(\GC;\{0,1\})$ if $\mathrm{b}>0$)}   \text{ for all } t \in [0,T],
\\
 &
\label{convs-nu-4}
z_{{\mu}_j} \weaksto z && \text{in } L^\infty(0,T;\SBV(\GC;\{0,1\})) \quad \text{if } \mathrm{b}>0,
\\
& 
\label{convs-nu-6}
 z_{{\mu}_j}(t) \to z(t) && \text{in } L^q(\GC) \text{ for every } 1\leq q <\infty \text{ and  for all } t \in [0,T] \quad \text{if } \mathrm{b}>0,
\end{align}
where \eqref{convs-nu-6} also follows from the compact embedding $\SBV(\GC;\{0,1\})\Subset L^q(\GC)$ for all $1\leq q<\infty$.
\RCO In addition,
thanks to, e.g., the Ascoli-Arzel\`a type compactness result from \cite[Prop.\ 3.3.1]{AGS08},
 we have
\begin{align}
&
\label{convs-nu-2}
u_{{\mu}_j} \to u && \text{in } \mathrm{C}^0([0,T];  \Spv_{\mathrm{weak}}), 
\\
&
\label{convs-nu-2-bis}
\dot{u}_{{\mu}_j} \to \dot{u} && \text{in } \mathrm{C}^0([0,T];  L^2(\Omega;\R^3)_{\mathrm{weak}})
\end{align}
(where the above convergences have to be understood in the sense specified in Notation \ref{not:1.1}). \nc
\end{subequations}
\item[\textbf{Step $1$: limit passage in the momentum balance.}]
From \eqref{convs-nu-2}
and well-known trace theorems
 we gather, in particular, that 
$\JUMP{u_{{\mu}_j}} \to \JUMP{u}$ in   $\mathrm{C}^0([0,T]; L^{\rho}(\GC))$ for every $1\leq \rho<4$. Hence, by  the Lipschitz continuity of $\alpha_\lambda$
we find that 
\[
\alpha_\lambda (\JUMP{u_{{\mu}_j}}) \to \alpha_\lambda(\JUMP{u}) \text{ in   $\mathrm{C}^0([0,T]; L^{\rho}(\GC))$ for every $1\leq \rho<4$.}
\]
Combining this with convergences \eqref{convs-nu} we can pass to the limit in the weak formulation 
\eqref{weak-mom} of the momentum equation, integrated on any time interval $[s,t]\subset [0,T]$, thus obtaining the integrated version of \eqref{weak-mom-undamped}. By the arbitrariness of the time-interval, we thus conclude the limiting momentum balance at almost every $t\in (0,T)$. 
\par
A comparison in the momentum balance shows that, indeed,  $\ddot{u} \in L^\infty(0,T;\Spu^*)$. 
\item[\textbf{Step $2$: limit passage in the semistability condition.}]
  We will first discuss this in the case $\mathrm{b}>0$. 
In order to prove \eqref{reduced-semistab}
 at any fixed time $t\in (0,T]$,
 following a well-established procedure \RCO (cf.\ \cite{MRS06}), \EEE it is sufficient to exhibit for every $\widetilde{z} \in \SBV(\GC;\{0,1\})$ 
  (with associated finite-perimeter set $\widetilde{Z}$), \EEE 
 a `mutual recovery sequence' 
 $(\widetilde{z}_{j})_{j}$ fulfilling 
 \begin{equation}
 \label{recovery-seq-constr}
 \begin{aligned}
 &
\limsup_{j\to\infty} \Big(  \int_{\GC} \tfrac \kappa 2   (  \widetilde{z}_j{-} z_{{\mu}_j}(t)) |\JUMP{u_{{\mu}_j}(t)}|^2 \dd \Surf(x) + \mathrm{b} \left( P(\widetilde{Z}_j,\GC){-} 
P(Z_{{\mu}_j}(t),\GC)\right)
\\
& \qquad \qquad 
+\int_{\GC}  (a_0{+}a_1) \EEE  |\widetilde{z}_j{-}z_{{\mu}_j}(t)| \dd \Surf(x) \Big)\\
&
 \leq  \int_{\GC}  \tfrac \kappa 2 (  \widetilde{z}{-} z(t)) |\JUMP{u(t)}|^2 \dd \Surf(x) + \mathrm{b} \left( P(\widetilde{Z},\GC){-} P(Z(t),\GC)\right)
+\int_{\GC}  (a_0{+}a_1) \EEE |\widetilde{z}{-}z(t)| \dd \Surf(x)\,.
\end{aligned}
\end{equation}
We borrow the construction of $(\widetilde{z}_j)_j$ from 
\cite{RosTho12ABDM}
and set 
\begin{equation}
\label{MRS}
\widetilde{z}_j: = \widetilde{z} \chi_{A_j} + z_{{\mu}_j}(t) \EEE (1{-}\chi_{A_j}) \qquad \text{with } A_j: = \{x \in \GC \, : \ 0 \leq \widetilde{z}(x) \leq z_{{\mu}_j}(t,x)\}\,.
\end{equation}
It can be easily checked that $\widetilde{z}_j \in [0,1]$ with 
$\widetilde{z}_j \leq z_{{\mu}_j}(t)$ a.e. on $\GC$. The arguments from \cite[Sec.\ 5.2]{RosTho12ABDM} in fact  show that, in the case 
$\mathrm{b}>0$, since   $ z_{{\mu}_j}(t) \EEE \in  \SBV(\GC;\{0,1\})$ for all $j \in \N$ and the test function $\widetilde{z}$ is also picked in  $\SBV(\GC;\{0,1\})$, then
$\widetilde{z}_j \in \SBV(\GC;\{0,1\})$ as well. Furthermore, in \cite{RosTho12ABDM} it has been checked that $\widetilde{z}_j \weakto \widetilde z$
(strongly, if $\mathrm{b}>0$)
 in $L^q(\GC)$ 
for every $1\leq q<\infty$. Hence,
also taking into account convergences \eqref{convs-nu}, one finds that 
\[
\begin{aligned}
&
 \lim_{j\to\infty} \int_{\GC}  (a_0{+}a_1) \EEE |\widetilde{z}_j{-}z_{{\mu}_j}(t)| \dd \Surf(x)=  \lim_{j\to\infty} \int_{\GC}  (a_0{+}a_1) \EEE(z_{{\mu}_j}(t){-}\widetilde{z}_j ) \dd \Surf(x) \\
 &\qquad= \int_{\GC}  (a_0{+}a_1) \EEE  (z(t){-}\widetilde{z}) \dd \Surf(x) = 
  \int_{\GC}  (a_0{+}a_1) \EEE |\widetilde{z}{-}z(t)| \dd \Surf(x), 
\\
&
\lim_{j\to\infty}  \int_{\GC}   \tfrac \kappa 2 ( \widetilde{z}_j{-} z_{{\mu}_j}(t)) |\JUMP{u_{{\mu}_j}(t)}|^2 \dd \Surf(x)  = 
   \int_{\GC} \tfrac \kappa 2  ( \widetilde{z}{-} z(t)) |\JUMP{u(t)}|^2 \dd \Surf(x) \,.
  \end{aligned}
  \]
For the limit passage in the  perimeter term $\mathrm{b} \left( P(\widetilde{Z}_j,\GC){-} 
P(Z_{{\mu}_j}(t),\GC)\right)$ 
we refer to the arguments from the proof of   \cite[Prop.\ 5.9]{RosTho12ABDM}. 
\par
 A direct computation shows that the above construction of the  recovery  sequence  
$(\widetilde{z}_j )_j$
also works for the case $\mathrm{b}=0$. 
\item[\textbf{Step $3$: limit passage in the energy-dissipation inequality.}]
It follows from convergences  \eqref{convs-nu} that for every $t\in [0,T]$
\[
\begin{aligned}
& \|\dot u(t)\|_{L^2(\Omega)}^2 \leq  \liminf_{j\to\infty} \|\dot{u}_{{\mu}_j}(t)\|_{L^2(\Omega)}^2, 
\\
&  \Var_{\calR}(z_{{\mu}_j};[0,t]) = \int_{\GC}a_1  (z_0{-}z_{{\mu}_j}(t)) \dd \Surf (x) \longrightarrow 
\int_{\GC}a_1 (z_0{-}z(t)) \dd \Surf (x) = 
 \Var_{\calR}(z;[0,t]),
 \\
 & 
\calE(t,u(t),z(t))  \leq \liminf_{j\to\infty}\calE(t,u_{{\mu}_j}(t),z_{{\mu}_j}(t)),  
\\
&
\lim_{j\to\infty} \int_0^t \partial_t\calE(s,u_{{\mu}_j}(s),z_{{\mu}_j}(s))\,\mathrm{d}s  =  \\
&
-\int_0^t \int_\Omega \dot{f}(s) u(s) \dd x \dd s
+\int_0^t \int_{\Omega{\setminus}\GC} \bbC e(\dot{w}(s)): e(u(s)) \dd x  \dd s
+\int_0^t \int_\Omega \varrho  \dddot{w}(s) u (s) \dd x \dd s \,.
\EEE
\end{aligned}
\]
Therefore, passing to the limit in the energy-dissipation balance \eqref{endissbal} we obtain the energy-dissipation inequality 
\eqref{enineq-undamped}. This concludes the proof.
\end{itemize}
\begin{remark}[Missing Energy-Dissipation balance]
\label{rmk:failure-ENID} 
\upshape A standard procedure 
for proving  the validity of the  lower energy-dissipation inequality (namely,  the  converse  of  \eqref{enineq-undamped}  in the present context)
for $\SE$ solutions 
 adapts the well known `Riemann-sum technique' for obtaining the lower energy estimate for fully rate-independent systems; we refer, e.g., to the arguments in
\cite{Roub10TRIP},  as well as the proof of \cite[Thm.\ 5.1.2]{MieRouBOOK}, for \emph{general} coupled rate-dependent/rate-independent systems. 
\par
In the present situation, one would argue in the very same way as in the proof of \cite[Prop.\ 4.7]{RosTho12ABDM} and deduce, from the semistability condition, via a Riemann-sum type argument, the following estimate
\[
\begin{aligned}
& 
 \int_0^t  \int_{\GC}  \kappa  z(s) \JUMP{\dot{u}(s)} \JUMP{u(s)} \dd \Surf(x)  \dd s 
 \\
 & 
  \leq   \int_{\GC}   \tfrac \kappa 2 z(t) |\JUMP{u(t)}|^2 \dd \Surf(x)  - 
\int_{\GC}   \tfrac \kappa 2 z(0) |\JUMP{u(0)}|^2 \dd \Surf(x) 
+ \mathrm{b}  P(Z(t),\GC)  - \mathrm{b}  P(Z(0),\GC) \\
&\quad+\int_{\GC}  (a_0{+}a_1) \EEE |z(t){-}z(0)| \dd \Surf(x)
\end{aligned}
\]
for every $t\in (0,T)$. 
One would then combine the above inequality 
  with the limiting momentum balance equation tested by the rate $\dot u$ of 
 the limiting
displacement (see Step 5 in the proof of Theorem \ref{mainthm-2}). 
However, \EEE observe that this test is not admissible in the present context:  in fact, due to the lack of the damping term, we no longer possess the information that
$\dot u \in H^1(\Omega{\setminus}\GC;\R^3)$. Therefore, we cannot resort to the previously described method, and proving
the energy-dissipation balance remains an open problem. 
\par
Seemingly, this is not just a technical issue, as the validity of the energy balance appears to be tightly related to the validity of a chain-rule formula
for the functional $u\mapsto \calE(t,u,z)$ which, in turn, cannot be proved here,   again due to the lack of spatial regularity for $\dot u$.
\end{remark}
\section{The  dimension \EEE reduction}
\label{s:dim-red}
In this section, we specify the framework in which we will perform the dimension reduction analysis
for the \emph{damped} adhesive contact system. 
After settling  the geometric  and energetic setup, we will proceed to  formulate the 3D rescaled problems by means of suitable space and time rescalings, and state our convergence results, Theorem \ref{mainthm-1} and \ref{mainthm-2} below, in which we will  provide the existence of \semi Energetic solutions for two Kirchoff-Love plate models.

\paragraph{\bf Geometric setup.} 
To avoid overburdening the exposition of the dimension reduction analysis, we particularize the geometry of the 3D adhesive contact problem \eqref{adh-con-intro} to a {\it cylindrical} case where the contact surface $\GC$ is positioned vertically. More precisely, 
following \cite{FreParRouZan11} we consider a bounded open Lipschitz subset $\omega$ of $\R^2$ such that 
\[
\omega = \op \cup \gC \cup \om
\]
where $\omega_\pm$ are two disjoint open connected Lipschitz subsets with a non-empty simply connected common boundary $\gC$. We will denote by $(\gD)_\pm\subset\partial\omega_\pm$ the part of the boundary of $\omega_\pm$, on which a time-dependent Dirichlet boundary condition will be imposed. We will assume that 
\[
\calH^1((\gD)_\pm)>0, \qquad \overline{\gC} \cap \overline{(\gD)_\pm} = \emptyset. 
\] 
We set $\gD:= (\gD)_+ \cup (\gD)_-$, and we denote by 
\[
\RCO \Omega^\eps_\pm: = \omega_\pm \times \left(-\frac{\eps}2, \frac{\eps}2 \right), \qquad \GC^\eps: = \gC \times \left(-\frac{\eps  }2, \frac{\eps  }2 \right), \qquad \GD^\eps = \gD \times \left(-\frac{\eps }2, \frac{\eps  }2 \right).
\]
 \EEE

\noindent Let
\[
\Omega^\eps: = \Omega^\eps_+ \cup \GC^\eps \cup \Omega^\eps_-. 
\]
\paragraph{\bf Energetic setup for the adhesive systems.}   We mention in advance that we will address 
the dimension reduction analysis  for a family of adhesive contact systems 
$(\mathcal{K}_\eps, \mathcal{V}_\eps, \mathcal{R}_\eps, \mathcal{E}_\eps)_\eps$ for which all  the material coefficients and  constitutive functions, 
\emph{with the exception of  the elasticity
tensor}, depend on
 the thickness parameter \EEE
 $\eps$. More precisely, for every $\eps>0$ we \EEE
 consider the  damped system for adhesive contact between the 3D bodies 
$\Omega^\eps_+$ and $\Omega^\eps_-$, 
with 
dissipation potentials\
\begin{align}
\label{Veps}
&
\calV_\eps : H_{\Dir^\eps}^1(\Omega^\eps{\setminus}\GC^\eps;\R^3) \to [0,\infty), \qquad \calV_\eps (\dot u): = \int_{\Omega^\eps{\setminus}\GC^\eps}\tfrac12 \bbD_\eps e(\dot u) {:} e(\dot u) \dd x\,,
\\
\label{Reps}
& 
\calR_\eps:L^1(\GC^\eps)\to[0,\infty]\,,\;
\calR_\eps(\dot z):=\int_{\GC^\eps} \mathrm{R}_\eps(\dot z)\,\mathrm{d}\Surf(x)\,,\;
\mathrm{R}_\eps(\dot z):=\left\{
\begin{array}{ll}
a_1^\eps|\dot z|&\text{if }\dot z\leq 0\,,\\
\infty&\text{otherwise\EEE},
\end{array}
\right.
\intertext{and kinetic energy}
&
\calK_\eps: L^2(\Omega^\eps;\R^3) \to [0,\infty), \qquad \calK_\eps(\dot u): = \int_{\Omega^\eps} \tfrac {\varrho_\eps}2 |\dot u|^2 \dd x \,.
\end{align}
\RCO We will specify our conditions on the families $(\varrho_\eps)_\eps$, $(\bbD_\eps)_\eps$, and $(a_1^\eps)_\eps$,  for the different convergence results, \EEE
in Section 
\ref{s:main-results} ahead. \EEE
 The system is 
supplemented with  
volume forces 
 \begin{subequations}
\label{forces-half-line}
\begin{align}
\label{forces-half-line-1}
&
(f_\eps)_\eps \subset W_{\mathrm{loc}}^{1,1}(0,\infty;L^2(\Omega^\eps;\R^3)),
\intertext{and time-dependent Dirichlet loadings}
&
\label{forces-half-line-2}
(w_\eps)_\eps \subset W_{\mathrm{loc}}^{2,1}(0,\infty;H^1(\Omega^\eps;\R^3)) \cap  W_{\mathrm{loc}}^{3,1}(0, \infty;L^2(\Omega^\eps;\R^3))
\end{align} 
 \end{subequations} \EEE
 (cf.\ 
 \eqref{assRegf}--\eqref{hyp-w}),
 and a family $(u_{0,\eps}, \dot{u}_{0,\eps}, z_{0,\eps})_\eps$ of initial data as in \eqref{initial-data}  and fulfilling
 the semi-stability condition 
  \eqref{init-conds2},
 with $\calR_\eps$ and $\calE_\eps$ given by \eqref{Reps} and \eqref{defEk-eps} below.
  \EEE
  Accordingly, we introduce the functionals
   $\Bigf_\eps:   [0,+\infty)  \to {H^1_{\GD^\eps}(\Omega^\eps{\setminus}\GC^\eps;\R^3)}^*$ \EEE defined by 
 \[
\begin{aligned}
 \langle \Bigf_\eps(t),u\rangle_{ H^1_{\GD^\eps}(\Omega^\eps{\setminus}\GC^\eps;\R^3) }  : = &  
  \int_{\Omega^\eps} {f_\eps(t)}{u}  \dd x 
-\int_{\Omega^\eps{\setminus}\GC^\eps} \bbC e(w_\eps \EEE(t)){:}  e(u) \dd x 
\\
 & -\int_{\Omega^\eps{\setminus}\GC^\eps} \bbD_\eps e({\dot{w}_\eps \EEE}(t)){:} e(u) \dd x 
-\int_{\Omega^\eps} \varrho_\eps  \ddot{w}_\eps(t) \EEE u \dd x\,.
\end{aligned}
\]
The system for adhesive contact is then driven by the energy functional $\calE^\eps:  [0,+\infty)  \times H^1_{\GD^\eps}(\Omega^\eps{\setminus}\GC^\eps;\R^3)  \times L^1(\GC^\eps) 
\to\R\cup\{\infty\}$ defined for all $t\in [0,+\infty)$ by 
\[
\calE^\eps (t,u,z) = \calE_\bulk^\eps (t,u) + \calE_\surf^\eps(u,z),
\]
with 
\begin{equation}
\label{defEk-eps}
\begin{aligned}
&
\calE_\bulk^\eps (t,u): =  \int_{\Omega^\eps{\setminus}\GC^\eps}\tfrac{1}{2}\mathbb{C}e(u):e(u) \dd x -  
\langle \Bigf_\eps(t),u\rangle_{ H^1_{\GD^\eps}(\Omega^\eps{\setminus}\GC^\eps;\R^3) }
\\
&
 \calE_\surf^\eps(u,z): =  \RCO \calH^\eps(u)   +  \RCO \calJ^\eps (u,z)  \EEE +\int_{\GC^\eps}\left( I_{[0,1]}(z)
{-} a_0^\eps z \right) \dd\Surf (x)+  \mathrm{b}_\eps \calG(z)  \qquad \text{with }  \mathrm{b_\eps} \geq 0\,,
\\
& \text{ and } 
\begin{cases} 
\RCO \calH^\eps(u)    = \nu_\eps \int_{\GC^\eps} \widehat{\alpha}_\lambda(\JUMP{u}) \dd \Surf(x), \EEE 
\\
 \calJ^\eps \EEE(u,z) : = \int_{\GC^\eps} \tfrac{\kappa}2 z Q^\eps ( \JUMP{u}) \dd \Surf(x) \quad \text{ with  } \qquad
Q^\eps ( \JUMP{u}) : = \left| \JUMP{u_1} \right|^2 + \left| \JUMP{u_2} \right|^2 + \eps^2\left| \JUMP{u_3} \right|^2\,,
\end{cases}
\end{aligned}
\end{equation}
(where, with slight abuse of notation, we now denote by $\JUMP{\cdot}$ the jump across the interface $\GC^\eps$), and where 
 $\widehat{\alpha}_\lambda$ and $\calG$ are defined as in \eqref{energy-H} and \eqref{energy-calG}. \EEE
 Note that in the above formulas 
the elasticity tensor $\mathbb{C}$ is taken independent of $\eps$ and also the parameters
$\lambda$ and $\kappa$   are fixed,  whereas  $(a_0^\eps)_\eps\subset (0,+\infty) $, \EEE
$(\nu_\eps)_\eps \subset (0,+\infty) $,
$(\mathrm{b}_\eps)_\eps \subset [0,+\infty) $  are given sequences  whose behavior will be specified 
  in Section 
\ref{s:main-results},  again.  Here, let us only highlight the coefficients $\nu_\eps$, whose role is to tune the term penalizing interpenetration between the  two bodies. 
We also mention  that  the different scaling of the third term 
in $Q^\eps$, which accounts for a different rigidity in the out-of-plane direction, 
is assumed along the footsteps of  \cite{FreParRouZan11}  in order to ensure that, for the limiting problem, both the in-plane and  the out-of-plane directions  contribute to the 
adhesive contact energy. 
\par
\begin{remark}
\label{comparisonFPRZ}
    \upshape
Unlike \cite{FreParRouZan11},  we will not work under the assumption that the material 
has monoclinic symmetry w.r.t.\ the $(x_1,x_2)$-plane,
which corresponds to requiring that  the elasticity tensor $\mathbb{C}$ has entries $\mathbb{C}_{ijk3}=0$ and $\mathbb{C}_{i333}=0$ for $i,j,k=1,2$. \EEE 
In fact, this condition was adopted in  \cite{FreParRouZan11} only in that, in the context of
their analysis,  it allowed for a more 
transparent reformulation of the limit problem in the absence of delamination.
\par
A  different assumption on $\bbC$ and $\bbD$, i.e.\ \eqref{condX-1}, will be instead required, albeit only for Theorem \ref{mainthm-2}. \EEE
\end{remark}
\EEE 
\subsection{The rescaled problems}
\label{subs:rescaled}
We perform a suitable change of variables in order to set the problem in a domain independent of $\eps$.
Thus,
we let
\[
 \Omega_\pm: =\EEE \omega_\pm \times \left(-\frac{1}2, \frac{1}2 \right), \qquad  \GC: =  \EEE \gC \times \left(-\frac{1}2, \frac{ 1}2 \right), \qquad  \GD: =  \gD \times \left(-\frac{1}2, \frac{1}2 \right)
\]
and set 
\[
\Omega: = \Omega_+\cup\GC\cup\Omega_-\,.
\]
We proceed by fixing some notation.
We consider the rescaling function
\[
\cv\eps: \overline{\Omega}\to \overline{\Omega^\eps} \qquad \cv\eps(x_1,x_2,x_3): = (x_1,x_2,\eps x_3)
\]
along with the  operators
\begin{equation}
\label{rescaling-spatial}
\begin{aligned}
&
\reszop:  
\RCO L^1(\GC^\eps) \EEE 
 \to  
 L^1(\GC) \EEE 
\qquad 
 \resz z(x) := 
z(r_\eps(x))\,,
\\
& 
\resgop:  H_{\GD^\eps}^1(\Omega^\eps{\setminus}\GC^\eps;\R^3)  \EEE \to  
H_{\GD}^1(\Omega{\setminus}\GC;\R^3), \qquad 
 \resg u(x) := 
  \left(u_1 \left(\cv\eps(x) \right), u_2 \left( \cv\eps(x) \right), \eps u_3 \left(\cv\eps(x) \right) \right).
  \end{aligned}
  \end{equation}
We define the linear operator
\[
\Lambda_\eps: \Msym \to \Msym \qquad \reso \eps { \Xi} : = \left(
\begin{array}{ c c c }
\xi_{11} &  \xi_{12} &  \tfrac1\eps \xi_{13}
\medskip
\\
\xi_{12} & \xi_{22} & \tfrac1\eps \xi_{23}
\medskip
\\
\tfrac1\eps \xi_{13} &  \tfrac1\eps \xi_{23} & \tfrac1{\eps^2} \xi_{33}
\end{array}
 \right)\,, \qquad  \text{for all $\Xi = (\xi)_{ij}\in \Msym $.}
\]
  \EEE
Hereafter, whenever applying $\Lambda_\eps$ to the tensor $\xi = e(v)$, we will also adopt the notation
\begin{equation}
\label{e-plan}
e^\eps(v) : = \reso\eps{e(v)}, \text{ and   use  the short-hand }  
\epl(v):= \left(\begin{array}{cc}
\partial_1 v_1 &  \tfrac12 (\partial_2 v_1+\partial_1 v_2) 
\medskip
\\
 \tfrac12 (\partial_2 v_1+\partial_1 v_2)  & \partial_{2} v_2 
 \end{array}
  \right),
\end{equation}
so that 
\begin{equation}
\label{e-eps-explicit}
e^\eps(v)=  
\begin{pNiceArray}{cw{r}{2.2cm}c}[margin]
\Block{2-2}{\epl(v)} & &  \tfrac1{2\eps} (\partial_1 v_3+\partial_3 v_1) 
\\
& &   \tfrac1{2\eps} (\partial_2 v_3+\partial_3 v_2) 
\medskip
\\
\tfrac1{2\eps} (\partial_1 v_3+\partial_3 v_1) & \tfrac1{2\eps} (\partial_2 v_3+\partial_3 v_2) & \tfrac1{\eps^2}  \partial_3 v_3 
\end{pNiceArray}
\end{equation}
\EEE
In what follows, we will use the crucial identity
\begin{equation}
\label{key-identity}
e(u)  {\circ} r_\eps \EEE = e^\eps (\resg u) \qquad \text{for all } u \in H_{\GD^\eps}^1(\Omega^\eps{\setminus}\GC^\eps;\R^3) \,.
\end{equation}
\noindent
\textbf{Space and time-rescaling of $\SE$ solutions.}
Along the lines of \cite{Maggiani-Mora} 
we consider both a spatial \emph{and  a temporal} rescaling of the 
 $\SE$ solutions to the adhesive contact system,
\emph{considered on the whole positive  half-line $(0,+\infty)$}.
 Indeed, the existence  of  \emph{Balanced}  $\SE$ solutions defined on $(0,+\infty)$ follows from Theorem \ref{thm:RT} as soon as the data $f$ and $w$ comply with 
\eqref{assRegf} and \eqref{hyp-w} \emph{locally} on $(0,+\infty)$.  In what follows, rescaled solutions will be denoted by the \textsf{sans} font. \EEE
 
 \par
 To be precise, we introduce the rescaling operators 
\[
\begin{aligned}
& \resop:   \RCO L_\loc^\infty(0,\infty; L^1(\GC^\eps)) \EEE 
 \to  \RCO L_\loc^\infty(0,\infty; L^1(\GC)) \EEE 
  \qquad  \res {z}(t,x): =
z\left(\frac t\eps, \cv\eps(x) \right)\,,
\\
&
 \reshop: L_\loc^\infty(0,\infty; H^1(\Omega^\eps{\setminus}\GC^\eps;\R^3))  \to  
L_\loc^\infty(0,\infty; H^1(\Omega{\setminus}\GC;\R^3)), \EEE
\\ 
&
 \qquad \resh u(t,x) := 
   \left(u_1 \left(\frac t\eps, \cv\eps(x) \right), u_2 \left(\frac t\eps, \cv\eps(x) \right), \eps u_3 \left(\frac t\eps, \cv\eps(x) \right) \right) \,.
\end{aligned}
\]
Then, with a family 
\[
\begin{aligned}
&
(u_\eps)_\eps \subset H_\loc^1(0,\infty;H_{\GD^\eps}^1(\Omega^\eps{\setminus}\GC^\eps;\R^3)) {\cap}W_\loc^{1,\infty}(0,\infty;L^2(\Omega^\eps;\R^3)) {\cap} H_\loc^2(0,\infty;H_{\GD^\eps}^1(\Omega^\eps{\setminus}\GC^\eps;\R^3)^*)
\\
&
(z_\eps)_\eps \subset   L_\loc^\infty(0,\infty; L^\infty(\GC^\eps;[0,1])), \EEE   \text{ and }  
(z_\eps)_\eps \subset  L_\loc^\infty (0,\infty;\SBV(\GC^\eps;\{0,1\})) \text{ if } \mathrm{b}>0, \EEE
 \end{aligned}
\]
   of $\SE$ solutions to the (damped) system for adhesive contact, we associate the 
rescaled  functions 
\[
\rese u\eps: = \resh{u_\eps}, \qquad \rese z\eps: = \res{z_\eps}\,.
\]
We will also rescale the initial data $(u_{0,\eps},   \dot{u}_{0,\eps}, \EEE z_{0,\eps})_\eps$, the  Dirichlet loading $w_\eps$, and the force $f_\eps$, by setting
\[
\begin{aligned}
&
\rese {u}\eps_0: = \resg{u_{0,\eps}}, 
\qquad  \resed u\eps_0: = \frac1\eps\resg{\dot{u}_{0,\eps}}, 
\qquad \rese {z}\eps_0: = \resz {z_{0,\eps}},
\\
&
\rese w\eps: = \resh{w_\eps}, 
\\
&
 \rese f\eps: =  \resw{f_\eps} \quad \text{ with } \quad \resw{f}(t,x): = 
\left(f_1 \left(\frac t\eps, \cv\eps(x) \right), f_2 \left(\frac t\eps, \cv\eps(x) \right), \frac1\eps f_3 \left(\frac t\eps, \cv\eps(x) \right) \right)\,.
\end{aligned}
\]
 We postpone to Remarks \ref{rmk:resc-u} \& \ref{rmk:resc-forces}  some comments on the rescalings of the displacements and of the forces. \EEE
\paragraph{\RCO Adhesive contact system for  $(\rese u\eps,\rese z\eps)$.}
With Proposition \ref{prop:rescaled-adh-cont} below  we are going to show that the rescaled functions $(\rese u\eps,\rese z\eps)$ are $\SE$ solutions of the
 adhesive contact system
with the kinetic energy 
\begin{equation}
\label{kin-cont-eps}
\rese K\eps(\dot{\sfu}): = \frac{\varrho_\eps}{2}
\int_\Omega \left( \eps^2 |\dot{\sfu}_1 |^2 {+}  \eps^2 |\dot{\sfu}_2|^2 {+}    |\dot{\sfu}_3|^2 \right) \dd x,
\end{equation}
and
driven by the $1$- and $2$-homogeneous dissipation potentials
\begin{align}
&
\label{R-resc}
\rese R \eps \colon \Spz\to[0,\infty]\,,\;
{\rese R{\eps}}(\dot \sfz):=\int_{\GC} R_\eps(\dot \sfz)\,\mathrm{d}\Surf(x)\,,\;
R_\eps(\dot \sfz):=\left\{
\begin{array}{ll}
a_1^\eps|\dot \sfz |&\text{if }\dot  \sfz\leq 0\,,\\
\infty&\text{otherwise}\,,
\end{array}
\right.
\\
&
\label{V-resc}
 \rese V\eps \colon H_{\GD}^1(\Omega{\setminus}\GC;\R^3) \to [0,\infty)\,,\; \EEE
\rese V\eps (\dot{\sfu}) : = \frac\eps 2 \int_{\Omega{\setminus}\GC} 
  \mathbb{D}_\eps e^\eps(\dot{\sfu}) {:} e^\eps(\dot{\sfu}) \dd x,
  \end{align}
\EEE
as well as  by 
 the energy functional
 \begin{subequations}
 \label{defEk-resc}
\begin{align}
\nonumber
&
\rese{E}\eps:[0,\infty) \times H_{\GD}^1(\Omega{\setminus}\GC;\R^3) \times L^1(\GC)  \to\R\cup\{\infty\}, 
 \text{ defined as }  \rese E\eps =  \rese{E}\eps_{\mathrm{bulk}} +  \rese{E}\eps_{\mathrm{surf}} \text{ with }
\\
&
 \label{defEk-resc-bulk}
\text{the bulk energy } \rese{E}\eps_\bulk \EEE (t,\sfu) :=
 \int_{\Omega{\setminus}\GC}\tfrac{1}{2}\mathbb{C}e^\eps(\sfu):e^\eps(\sfu) \dd x  - \pairing{}{\Spvr}{\rese F \eps(t)}{\sfu} 
 \intertext{with $\rese F\eps$ given by }
 &
 \nonumber
 \begin{aligned}
&
 \pairing{}{\Spvr}{\rese F \eps(t)}{\sfu} :=   \int_{\Omega} \rese f\eps(t) \sfu \dd x -
 \int_{\Omega{\setminus}\GC} \bbC e^\eps(\rese w\eps(t)): e^\eps(\sfu) \dd x 
-\eps\int_{\Omega{\setminus}\GC} \bbD_\eps e^\eps(\resed w\eps(t)): e^\eps(\sfu) \dd x 
\\  & \quad\qquad \qquad \qquad \qquad \qquad
- \eps^2 \int_\Omega \varrho_\eps \sum_{i=1}^2 \reseidd w\eps i(t)  \sfu_{i}  \dd x
-\int_\Omega \varrho_\eps   \reseidd w\eps 3(t)  \sfu_3 \dd x
\end{aligned}
\intertext{and the surface energy }
& 
 \label{defEk-resc-surf}
\begin{aligned}
&
 \rese{E}\eps_{\mathrm{surf}} (\sfu,\sfz) = 
\rese{H}\eps(\sfu) +\rese {J}\eps (\sfu,\sfz)  + \int_{\GC} \left(I_{[0,1]}(\sfz){-}a_0^{\eps} \sfz \right) \dd \Surf(x) +\mathrm{b}_\eps\calG(\sfz) \text{ with }
\\
&
\qquad  \rese{H}\eps(\sfu) =   \nu_\eps \EEE \int_{\GC}\widehat{\alpha}_\lambda(\JUMP{\sfu_1,\sfu_2,0}) \dd \Surf(x), 
\\
& 
\qquad 
 \rese {J}\eps (\sfu,\sfz) = \int_{\GC}  \tfrac \kappa 2 \EEE  \sfz Q(\JUMP{\sfu})\dd \Surf(x)\,,   \qquad \text{ with }  Q(\JUMP{\sfu})=|\JUMP{\sfu}|^2 \,. 
 \end{aligned}
  \end{align}
\end{subequations}

\begin{remark}[Space and time rescaling of the displacements]
\label{rmk:resc-u}
\upshape    The space-rescaling of the displacements is consistent with classical dimension reduction results in the elasticity setting. In particular, the ratio $\eps$ between the tangential and vertical displacements is motivated by the seminal linearization results in \cite{friesecke-james-muller}.
 As a consequence of this spatial rescaling, it is natural to introduce the operator $e^\eps$ 
  from \eqref{e-eps-explicit}, \EEE
 for which the key identity \eqref{key-identity} holds. 
From the  viewpoint of analysis, we remark that the a priori estimates for ($e^\eps(\resed u\eps))_\eps$ to be  obtained later on  will ultimately lead to the Kirchhoff-Love structure of the limiting displacements, cf.\ \eqref{KL-definition}. We also point out that an explicit dependence of the limiting displacement on the $x_3$-variable is not new in dimension reduction studies for inelastic problems. We refer to \cite[Section 5]{DavoliMora15}, for an example in the setting of perfect plasticity.

The time rescaling corresponds to assuming that oscillations in the set $\Omega^\eps$ occur at a slow time scale, so that a reparametrization is needed to see them in the limit. With the rescaling adopted in this paper, our result is consistent with the classical ones obtained in dimension reduction problems for elastodynamics,  as well as in \cite{Maggiani-Mora} for a dynamical model of perfectly plastic plates. 
    In particular, our reduced model coincides with the dynamic Von K\'arm\'an plate model justified in nonlinear elasticity in \cite{abels-mora-muller,abels-mora-muller2} (see also the references therein for alternative formal derivations by asymptotic expansions). Concerning wave propagation, in the limit  only the inertial contribution affects the component $u_3$, thus solely  allowing for wave propagation in the normal component of the displacement. 
\end{remark}
\begin{remark}[Space and time rescaling of the forces]
\label{rmk:resc-forces}
\upshape
The space and time rescaling for the 
Dirichlet loadings $w_\eps$ obviously needs to
be the same as that for the displacements. In turn, the spatial rescaling of 
$f_\eps$ needs to be ``compatible'' with that of
$u_\eps$ and thus 
 features a factor $\tfrac1\eps$ in the vertical component. Analytically, this guarantees the validity of the key identity \eqref{compatibility-rescaling} ahead. 
\par
We point out that the very same rescaling of the body forces was adopted in \cite{Maggiani-Mora}, where the dimension reduction was carried out in the context of dynamical perfect plasticity. While these  specific choices seem to 
be necessary for our analysis, the time rescaling  of $f_\eps$ and $w_\eps$ brings about significant limitations in the applicability of our results, see Remark \ref{rmk:criticism} ahead. 
\end{remark}
\EEE

 The rescaled conditions satisfied by the pair $(\rese{u}\eps,\rese{z}\eps)$ are collected below.\EEE
\begin{proposition}
\label{prop:rescaled-adh-cont}
For every $\eps>0$ the functions 
\[
\begin{aligned}
&
\rese u\eps  \in  H_\loc^1(0,\infty;H_{\GD}^1(\Omega{\setminus}\GC;\R^3)), \RMA \text{ and, additionally, }  
\\
&\RMA  \quad
\varrho_\eps \rese u\eps \in   W_\loc^{1,\infty}(0,\infty;L^2(\Omega;\R^3)) {\cap} H_\loc^2(0,\infty;H_{\GD}^1(\Omega{\setminus}\GC;\R^3)^*), \EEE
\\
& \rese z\eps \in    L_\loc^\infty (0,\infty;L^\infty(\GC;[0,1])), 
\text{ and } 
\rese z\eps \in     L_\loc^\infty (0,\infty;\SBV(\GC;\{0,1\})) \text{ if } \mathrm{b}>0,
\end{aligned}
\]
are (Balanced) $\SE$  solutions of the damped  inertial system 
$\ingrsysreps$, namely they fulfill 
\begin{compactitem}
\item[-] the rescaled weak momentum balance
 for almost all $t\in (0,\infty)$ and all $\varphi \in H^1_{\Dir}(\Omega{\setminus}\GC;\R^3)$
\begin{equation} \label{weak-mom-eps}
\begin{aligned}
&
 \eps^2 \langle \varrho_\eps \reseidd u\eps1(t), \varphi_1 \rangle_{H^1(\Omega{\setminus}\GC)} + \eps^2  \langle    \varrho_\eps \reseidd u\eps2(t), \varphi_2 \rangle_{H^1(\Omega{\setminus}\GC)}
+
\langle \varrho_\eps  \reseidd u\eps3(t), \varphi_3 \rangle_{H^1(\Omega{\setminus}\GC)} \EEE
\\
& \quad + \int_{\Omega{\setminus}\GC}  \left( \eps \mathbb{D}_\eps e^\eps(\resed u\eps(t)) {:} e^\eps(\varphi) 
{+} \bbC e^\eps(\rese u\eps (t))  {:} e^\eps(\varphi)  \right) \dd x 
\\
& \quad 
+  \nu_\eps \EEE \int_{\GC}\alpha_\lambda(\JUMP{\rese u\eps_1(t), \rese u\eps_2(t), 0})\cdot \JUMP{\varphi_1,\varphi_2,0} \dd \Surf(x)
+\int_{\GC} \kappa\rese z\eps(t)\JUMP{\rese u\eps(t)}\JUMP{\varphi}\,\mathrm{d}\Surf (x)
\\
&
= \int_\Omega \rese f\eps(t)\varphi \dd x 
-\int_{\Omega\setminus \GC} \bbC e^\eps(\rese w\eps(t)) : e^\eps(\varphi) \dd x - \eps\int_{\Omega\setminus \GC} \bbD_\eps
 e^\eps(\resed w\eps(t)) : e^\eps(\varphi) \dd x
\\
& \quad
- \eps^2 \int_\Omega \left(\varrho_\eps \reseidd w\eps1(t) \varphi_1 {+}   \varrho_\eps \reseidd w\eps2(t) \varphi_2 \right) \dd x 
-
\int_\Omega \varrho_\eps  \reseidd w\eps3(t) \varphi_3 \dd x;
\end{aligned}
\end{equation}
\item[-] the rescaled semistability condition
 for every $t \in [0,\infty)$, \EEE
\RCO featuring the set $\rese Z\eps(t): = r_\eps^{-1} (Z_\eps(\eps t))$, \EEE 
\begin{equation} \label{reduced-semistab-eps}
\begin{aligned}
&
\int_{\GC}\RNEW \tfrac \kappa 2 \EEE \rese z\eps(t) |\JUMP{\rese u\eps(t)}|^2 \dd \Surf(x) + 
\RCO \mathrm{b}_\eps \EEE P(\rese Z\eps(t),\GC)  - \int_{\GC} a_\eps^0  \rese z\eps(t)  \dd \Surf(x) \EEE
\\ &  \leq  \int_{\GC}  \RNEW \tfrac \kappa 2 \EEE  \widetilde{z} |\JUMP{\rese u\eps(t)}|^2 \dd \Surf(x) 
+ \RCO \mathrm{b}_\eps \EEE P(\widetilde{Z},\GC)  - \int_{\GC} a_\eps^0  \widetilde{z}  \dd \Surf(x) \EEE
+\int_{\GC} a_\eps^1 |\widetilde{z}{-}\rese z\eps(t)| \dd \Surf(x)  \\ & \qquad  \text{for all }
 \RCO \widetilde{z} \in L^1(\GC)  \  (\widetilde{z} \in
 \SBV(\GC;\{0,1\}) \text{ if } \mathrm{b_\eps}>0), \EEE  \text{ with } 0\leq\widetilde{z}\leq \rese z\eps(t)  \ \aein \  \GC;
\end{aligned}
\end{equation}
\item[-] the rescaled  energy-dissipation balance along any interval $[s,t]\subset [0,\infty)$
\begin{equation}
\label{EDB-eps}
 \begin{aligned}
 & 
\frac{\varrho_\eps}{2}
\int_\Omega \left( \eps^2 |\reseid u\eps1 (t)|^2 {+}  \eps^2 |\reseid u\eps2 (t)|^2 {+}    |\reseid u\eps3 (t)|^2 \right) \dd x 
+ \eps \int_s^t \int_{\Omega\setminus \GC} \bbD_\eps e^\eps (\resed u\eps) {:} e^\eps(\resed u\eps) \dd x  \dd r
\\
  & 
\quad + \Var_{\mathsf{R^\eps}}(\rese z\eps, [s,t])+
 \RCO \rese E\eps \EEE (t,\rese u\eps(t),\rese z\eps(t))\\ 
&  =
\frac{\varrho_\eps}{2}
\int_\Omega \left( \eps^2 |\reseid u\eps1 (s)|^2 {+}  \eps^2 |\reseid u\eps2 (s)|^2 {+}    |\reseid u\eps3 (s)|^2 \right) \dd x 
+   \RCO \rese E\eps \EEE (s,\rese u\eps(s),\rese z\eps(s)) 
\\
& \quad + \int_s^t \partial_t
 \rese E\eps(r,\rese u\eps(r),\rese z\eps(r)) \,  \mathrm{d}r\,.
 \end{aligned}
 \end{equation}
\end{compactitem}
\end{proposition}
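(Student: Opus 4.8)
The plan is to prove the statement by a single explicit change of variables: we show that the space-time rescaling operators $\reshop$ and $\resop$ transport each of the three defining relations of a (Balanced) $\SE$ solution of the physical system $\ingrsyseps$ --- the weak momentum balance \eqref{weak-mom}, the reduced semistability inequality \eqref{reduced-semistab}, and the energy-dissipation balance \eqref{endissbal} (resp.\ inequality \eqref{enineq}) --- into the corresponding relation \eqref{weak-mom-eps}, \eqref{reduced-semistab-eps}, \eqref{EDB-eps} of the rescaled system $\ingrsysreps$. Since by Theorem \ref{thm:RT} each $\ingrsyseps$ admits a Balanced $\SE$ solution $(u_\eps,z_\eps)$ on the whole of $(0,\infty)$ (the data $f_\eps,w_\eps$ being locally regular by \eqref{forces-half-line}), this yields the assertion, the Balanced case and the plain case being treated simultaneously because multiplication by $\eps>0$ turns ``$=$'' into ``$=$'' and ``$\le$'' into ``$\le$''. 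First I would note that $\rese u\eps=\resh{u_\eps}$ and $\rese z\eps=\res{z_\eps}$ have the stated regularity: for fixed $\eps$ the spatial operators $\resgop,\reszop$ are linear isomorphisms between the relevant Lebesgue/Sobolev spaces over $\Omega^\eps,\GC^\eps$ and over $\Omega,\GC$ (with $\eps$-dependent norms) and intertwine the duality pairings, while the time substitution $\tau\mapsto\eps\tau$ is an affine bijection; hence $\SBV$, $\mathrm{BV}([0,T];\cdot)$, $H^k(0,T;\cdot)$ and the like are preserved.

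The computational engine is the identity \eqref{key-identity}, $e(u)\circ r_\eps=e^\eps(\resg u)$, together with: the scaling rules $\resg[\dot u_\eps](\tau)=\eps\,\resed u\eps(\eps\tau)$, $\resg[\ddot u_\eps](\tau)=\eps^2\,\resedd u\eps(\eps\tau)$, $\resg[\dddot w_\eps](\tau)=\eps^3\,\reseddd w\eps(\eps\tau)$ (and the analogues for $w_\eps$); the component rules $f_{\eps,i}\circ r_\eps=\rese f\eps_i$ for $i=1,2$ and $f_{\eps,3}\circ r_\eps=\eps\,\rese f\eps_3$; and the volume/surface change-of-variables formulas $\int_{\Omega^\eps\setminus\GC^\eps}(\cdot)\dd x=\eps\int_{\Omega\setminus\GC}(\cdot)\circ r_\eps\,\dd x$, $\int_{\GC^\eps}(\cdot)\dd\Surf=\eps\int_{\GC}(\cdot)\circ r_\eps\,\dd\Surf$, the Jacobian $\eps$ arising because $r_\eps$ dilates only the thickness variable. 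Testing \eqref{weak-mom} for $(u_\eps,z_\eps)$ at time $\tau$ against $v$ and setting $\varphi:=\resg v$ --- which ranges over $\Spu$ exactly as $v$ ranges over $H^1_{\GD^\eps}(\Omega^\eps{\setminus}\GC^\eps;\R^3)$ --- every term produces precisely $\eps$ times the matching term of \eqref{weak-mom-eps} at time $t=\eps\tau$: the inertial term yields $\eps\varrho_\eps\int_\Omega(\eps^2\reseidd u\eps1\varphi_1+\eps^2\reseidd u\eps2\varphi_2+\reseidd u\eps3\varphi_3)$, the elastic term $\eps\int\bbC e^\eps(\rese u\eps){:}e^\eps(\varphi)$, the viscous term $\eps^2\int\bbD_\eps e^\eps(\resed u\eps){:}e^\eps(\varphi)$, while in the adhesive term the weight $\eps^2$ on $\JUMP{u_3}$ in $Q^\eps$ cancels the two factors $\eps^{-1}$ produced by $\JUMP{u_{\eps,3}}\circ r_\eps$ and by $v_3$, leaving $\eps\int_\GC\kappa\rese z\eps\JUMP{\rese u\eps}\JUMP{\varphi}\,\dd\Surf$; the $\alpha_\lambda$-term and the right-hand side are handled identically via the definitions of $\rese f\eps,\rese w\eps$. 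Dividing by $\eps$ and invoking the arbitrariness of $\varphi$ and of $\tau$ gives \eqref{weak-mom-eps}.

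For the semistability step I would insert, in \eqref{reduced-semistab} for $(u_\eps,z_\eps)$ at time $\tau$, the competitor $\resz{\tilde z}$ in place of an arbitrary admissible $\tilde z$ (note that $\reszop$ maps the admissible competitors for $z_\eps(\tau)$ onto those for $\rese z\eps(t)$): the bulk energy and the $z$-independent $\calH^\eps$-term drop out, the surface terms acquire an overall factor $\eps$ (with the $Q^\eps$-weights collapsing to $Q(\JUMP{\rese u\eps})=|\JUMP{\rese u\eps}|^2$), and the regularizing term satisfies $\calG(z_\eps(\tau))=\eps\,\calG(\rese z\eps(t))$, i.e.\ $\mathrm{b}_\eps P(Z_\eps(\eps t),\GC^\eps)=\eps\,\mathrm{b}_\eps P(\rese Z\eps(t),\GC)$, because $r_\eps$ restricted to the cylinder $\GC^\eps=\gC\times(-\tfrac\eps2,\tfrac\eps2)$ acts as the identity in the $\gC$-variable and as a dilation by $\eps$ in the thickness; dividing by $\eps$ yields \eqref{reduced-semistab-eps}. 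Finally, applying the same substitutions to \eqref{endissbal} on the physical interval $[\tfrac s\eps,\tfrac t\eps]$ --- now also using $\dd\sigma=\tfrac1\eps\dd r$ in the time integrals --- the kinetic energy becomes $\eps\,\rese K\eps(\resed u\eps(t))$, the viscous dissipation $\int_{s/\eps}^{t/\eps}2\calV_\eps\dd\sigma$ becomes $\eps^2\int_s^t\!\int_{\Omega\setminus\GC}\bbD_\eps e^\eps(\resed u\eps){:}e^\eps(\resed u\eps)\dd x\dd r$, $\Var_{\calR_\eps}$ becomes $\eps\,\Var_{\mathsf{R^\eps}}$, $\rese E\eps$-wise $\calE_\eps$ becomes $\eps\,\rese E\eps$, and the work integral $\int_{s/\eps}^{t/\eps}\partial_\sigma\calE_\eps\dd\sigma$ becomes $\eps\int_s^t\partial_r\rese E\eps\dd r$ (here one uses the scaling rules for $\dot f_\eps,\dot w_\eps,\ddot w_\eps,\dddot w_\eps$); dividing by $\eps$ gives \eqref{EDB-eps}, and running the identical computation with ``$\le$'' in place of ``$=$'' covers the plain $\SE$ case.

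The only genuinely delicate aspect of the whole argument is the bookkeeping of the exact powers of $\eps$ borne by each object --- in-plane versus out-of-plane displacement components, the successive time derivatives of $w_\eps$, the velocity inside the viscous terms, and the weight in $Q^\eps$ --- since it is precisely their conspiracy that produces a single common factor $\eps$ in each of the three relations, which then cancels. The one point that truly needs a separate (if short) argument, rather than a routine computation, is the transformation of the $\mathrm{BV}$-regularizing term $\calG$ under the \emph{anisotropic} rescaling $r_\eps$: there the cylindrical geometry of $\GC^\eps$ is essential, since a generic relative perimeter would not transform by a single scalar factor.
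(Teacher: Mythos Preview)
Your proof is correct and follows essentially the same route as the paper's: both arguments insert test functions of the form $\varphi=\resg{v_\eps}$ into the physical momentum balance, divide by $\eps$, perform the spatial change of variables $x\mapsto r_\eps(x)$ and the temporal substitution $\tau\mapsto t/\eps$, and then check term by term that each contribution transforms into its rescaled counterpart (the semistability and the energy balance being handled analogously). The only minor difference is presentational---you state the scaling identities up front and flag the perimeter term as the one nontrivial point, whereas the paper computes each term in turn and is equally terse about the transformation of $\calG$ under the anisotropic map $r_\eps$.
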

\begin{proof}
Throughout the proof, to avoid overburdening notation we will write the duality pairings involving the inertial terms in the momentum balance as integrals. \EEE
\medskip

\noindent
{\bf $\vartriangleright $ \RCO Momentum balance \EEE \eqref{weak-mom-eps}:} In the weak momentum balance 
\eqref{weak-mom} satisfied by the $\SE$ solutions $(u_\eps, z_\eps)$  and with $\mathbb{D}$ and $Q$ replaced by $\mathbb{D}_\eps$ and $Q_\eps$, cf. \eqref{defEk-eps},  we choose test functions 
$v_\eps \in  H^1_{\Dir^\eps}(\Omega^\eps{\setminus}\GC^\eps;\R^3)$ of the form $v_\eps = (v_{1,\eps}, v_{2,\eps}, v_{3,\eps})$
with
\begin{equation}
\label{choice-of-test-functions}
v_{i,\eps}(x) = \varphi_i(r_\eps^{-1}(x)) \text{ for } i =1,2, \qquad
v_{3,\eps}(x) =\frac1\eps \varphi_3(r_\eps^{-1}(x))
 \qquad \text{for an arbitrary } \varphi \in H^1_{\Dir}(\Omega{\setminus}\GC;\R^3)\,,
\end{equation}
namely $\varphi = \resg {v_\eps}$. 
For later use, we record here that, due to \eqref{key-identity}, 
\begin{equation}
\label{e-eps-later}
 e(v_\eps) \RCO {\circ} r_\eps\EEE  =  e^\eps ( \resg {v_\eps})  = e^\eps (\varphi) \,.
\end{equation}
 We then divide \eqref{weak-mom} by $\eps$ and write it at the time  $\tfrac t\eps$ \EEE for almost all $t\in (0,\infty)$, thus obtaining 
\begin{equation} \label{weak-mom-eps-detail}
\begin{aligned}
&
\frac{1}\eps \int_{\Omega^\eps}  \varrho_\eps  \ddot{u}_\eps(\eps^{-1} t) {\cdot}  v_\eps \dd x 
+\frac1\eps \int_{\Omega^\eps{\setminus}\GC^\eps} \left( \mathbb{D}_\eps e(\dot{u}_\eps (\eps^{-1} t)) {:} e(v_\eps) 
{+} \mathbb{C}e(u_\eps(\eps^{-1} t)){:}e(v_\eps) \right) \,\mathrm{d}x
\\
& \quad 
+  \RCO \nu_\eps \EEE \int_{\GC^\eps} \tfrac1\eps \alpha_\lambda(\JUMP{u_\eps(\eps^{-1} t)}){\cdot} \JUMP{v_\eps} \dd \Surf(x)
\RNEW +\frac1\eps \sum_{i=1}^2 \int_{\GC^\eps} \kappa  z(\eps^{-1} t) \EEE \,\JUMP{u_{i,\eps}(\eps^{-1} t)}\JUMP{v_{i,\eps}}\,\mathrm{d}\Surf  (x)
\\
& \quad 
+ \eps  \int_{\GC^\eps} \kappa z(\eps^{-1} t) \EEE  \,\JUMP{u_{3,\eps}(\eps^{-1} t)}\JUMP{v_{3,\eps}}\,\mathrm{d}\Surf  (x) \EEE
\\
&
=\frac1\eps  \int_{\Omega^\eps} f_\eps (\eps^{-1} t)v_\eps \dd x 
-\frac1\eps \int_{\Omega^\eps\setminus \GC^\eps} \bbC e(w_\eps(\eps^{-1} t)) {:} e(v_\eps) \dd x
 - \frac1\eps \int_{\Omega^\eps\setminus \GC^\eps} \bbD_\eps e(\dot{w}_\eps(\eps^{-1} t))
 {:} e(v_\eps) \dd x
 \\
 & \quad 
-\frac{1}\eps \int_{\Omega^\eps} \varrho_\eps  \ddot{w}_\eps(\eps^{-1} t) v_\eps \dd x,
\end{aligned}
\end{equation}
\RNEW where we have used that $\nabla Q_\eps(y) = 2(y_1,y_2,\eps^2 y_3)$ for every $y \in \R^3$\EEE, see \eqref{defEk-eps}.
Let us now examine each of the above integral terms separately. The first one equals
\[
\begin{aligned}
&
\frac{1}\eps \sum_{i=1}^2 \int_{\Omega^\eps}  \varrho_\eps  \ddot{u}_{i,\eps}(\eps^{-1} t,x) v_{i,\eps}(x) \dd x  + 
\frac{1}\eps  \int_{\Omega^\eps}  \varrho_\eps  \ddot{u}_{3,\eps}(\eps^{-1} t,x) v_{3,\eps}(x) \dd x
\\
&
\stackrel{(1)}{=}
 \sum_{i=1}^2 \int_{\Omega}  \varrho_\eps  \ddot{u}_{i,\eps}(\eps^{-1} t,r_\eps(x)) v_{i,\eps}(r_\eps(x))  \dd x  \EEE
+  \int_{\Omega}  \varrho_\eps \ddot{u}_{3,\eps}(\eps^{-1} t,r_\eps(x)) v_{3,\eps}(r_\eps(x))  \dd x \EEE
\\
& 
\stackrel{(2)}{=}  \sum_{i=1}^2 \int_{\Omega}  \varrho_\eps \eps^2  \reseidd u\eps i(s,x) \varphi_i(x) \dd x + 
\int_{\Omega}  \varrho_\eps   \reseidd u\eps 3(s,x) \varphi_3(x) \dd x,
\end{aligned}
\]
with (1) following from the spatial change of variables $x \mapsto r_\eps(x)$ 
and (2) from the temporal change of variables $s = \eps^{-1} t$, taking into account that,  
for $i=1,2$,
  $  \ddot{u}_{i,\eps}(\eps^{-1} t,r_\eps(x)) = \eps^2  \reseidd u\eps i(s,x)$ and $v_{i,\eps}(r_\eps(x)) = \varphi_i(x)$, 
  while $\ddot{u}_{3,\eps}(\eps^{-1} t,r_\eps(x))  = \eps \reseidd u\eps 3(s,x)$ and  $v_{3,\eps}(r_\eps(x)) = \
  \frac1\eps \varphi_3(x)$.
  With the same change of variables we find that 
  \[
  \frac1\eps
   \int_{\Omega^\eps{\setminus}\GC^\eps} \mathbb{D}_\eps e(\dot{u}_\eps (\eps^{-1} t,x)) {:} e(v_\eps(x)) \dd x  
   =  \eps  \int_{\Omega{{\setminus}}\GC} \mathbb{D}_\eps e^\eps (\resed u\eps (s, x)) {:} e^\eps (\varphi(x)) \dd x,    
   \]
  where we have also  used that 
\RCO  $ e(\dot{u}_\eps (\eps^{-1} t, r_\eps(x))) = \eps e^\eps (\resed u\eps (s, x))$ \EEE  and \eqref{e-eps-later}. Relying again on \eqref{key-identity} we also find that 
  \[
  \frac1\eps \int_{\Omega^\eps{\setminus}\GC^\eps} \mathbb{C}e(u_\eps(\eps^{-1} t,x){:} e(v_\eps(x))  \,\mathrm{d}x  = 
   \int_{\Omega{{\setminus}}\GC}  \mathbb{C} e^\eps (\rese u\eps (s, x)) {:} e^\eps (\varphi(x)) \dd x. 
  \]
   Recall that $\alpha_{\lambda}$ is the subdifferential of $\widehat{\alpha}_\lambda$, and hence
   $\alpha_\lambda(v)=\tfrac1\lambda (v-\Pi_K(v))$ for every $v\in \mathbb{R}^3$,
   \RCO with $\Pi_K$ the projection on the cone $K = \{ v\in \R^3\, : v \cdot n \geq 0 \}$.
    In particular, since  
    $\Pi_K$
    only acts on the first two components of its arguments, it follows that the third component of the vector $\alpha_\lambda(\JUMP{u_\eps(\eps^{-1} t)})$ is null. Thus, from \EEE
 a further change of variables, we find 
\[
\begin{aligned}
\RCO \nu_\eps \EEE  \int_{\GC^\eps}\tfrac1\eps\alpha_\lambda(\JUMP{u_\eps(\eps^{-1} t)}){\cdot} \JUMP{v_\eps} \dd \Surf(x) & = 
\RCO \nu_\eps \EEE
\sum_{i=1}^2  \int_{\GC}\alpha_{i,\lambda}(\JUMP{u_\eps(\eps^{-1} t,r_\eps(x))}) {\cdot} \JUMP{v_{i,\eps}(r_\eps(x))} \dd \Surf(x)\\ &  = 
\RCO \nu_\eps \EEE \int_{\GC}\alpha_\lambda(\JUMP{\rese u\eps_1(s), \rese u\eps_2(s), 0}) {\cdot} \JUMP{\varphi_1,\varphi_2,0} \dd \Surf(x)\,.
\end{aligned}
\]
 The last \RNEW two terms \EEE on the left-hand side of \eqref{weak-mom-eps-detail} become
\[
\begin{aligned}
&
 \int_{\GC}   \left( \sum_{i=1}^2  \kappa  z(\eps^{-1} t) \EEE \JUMP{u_{i,\eps}(\eps^{-1} t,r_\eps(x))} {\cdot} \JUMP{v_{i,\eps}(r_\eps(x))} 
  {+}  \kappa  z(\eps^{-1} t) \EEE \eps^2 \JUMP{u_{3,\eps}(\eps^{-1} t,r_\eps(x))}{\cdot} \JUMP{v_{3,\eps}(r_\eps(x))} 
 \right) 
\mathrm{d}\Surf(x) \\ & = \int_{\GC} \kappa\rese z \eps(s)\JUMP{\rese u\eps(s)} {\cdot} \JUMP{\varphi}\,\mathrm{d}\Surf  (x)
\end{aligned}
\]
where the last identity follows taking into account that $u_{3,\eps}(\eps^{-1} t,r_\eps(x)) = \frac1\eps \resei u\eps3 (s,x)$ and, again, $v_{3,\eps}(r_\eps(x))= \frac1\eps \varphi_3(x)$. 
Finally, repeating the very same calculations as in the above lines we find that the right-hand side of \eqref{weak-mom-eps-detail} equals 
\[
\begin{aligned}
\text{r.h.s.\ of  \eqref{weak-mom-eps-detail}} &  = 
\int_\Omega \rese f\eps(s)\varphi \dd x 
-\int_{\Omega\setminus \GC} \bbC e^\eps(\rese w\eps(s)) : e^\eps(\varphi) \dd x - \eps\int_{\Omega\setminus \GC} \bbD_\eps
 e^\eps(\resed w\eps(s)) : e^\eps(\varphi) \dd x
\\
& \quad
- \eps^2 \int_\Omega \left(\varrho_\eps \reseidd w\eps1(s) \varphi_1 {+}  \varrho_\eps \reseidd w\eps2(s) \varphi_2 \right) \dd x 
-
\int_\Omega \varrho_\eps  \reseidd w\eps3(s) \varphi_3 \dd x.
\end{aligned}
\]
 In particular, we point out  that 
the identity 
\begin{equation}
    \label{compatibility-rescaling}
\frac1\eps  \int_{\Omega^\eps} f_\eps (\eps^{-1} t)v_\eps \dd x =\int_\Omega \rese f\eps(s)\varphi \dd x 
\end{equation}
holds thanks to the chosen rescaling of the data $f_\eps$. \EEE
All  in all, we conclude the validity of \eqref{weak-mom-eps}.
\medskip

\noindent
\paragraph{$\vartriangleright$ Semistability condition \eqref{reduced-semistab-eps}:}
\RCO Again, it is convenient to prove \eqref{reduced-semistab-eps} directly in the case $\mathrm{b}_\eps>0$. Thus, \EEE
 we write the semistability condition \eqref{reduced-semistab} satisfied by 
the curves $(u_\eps, z_\eps)$ at the time $\eps^{-1} t$, with $t\in (0,\infty) $ arbitrary, and divide it by $\eps$. Performing the spatial change of variable $x\to r_\eps(x)$ leads to
\[
\begin{aligned}
&
\int_{\GC} \tfrac{\kappa}{2}   z_\eps \left( \eps^{-1} t, r_\eps(x)\right) 
\left( \left|\JUMP{u_{1,\eps} \left(\eps^{-1} t , r_\eps(x) \right)} \right|^2 {+} 
\left|\JUMP{u_{2,\eps} \left(\eps^{-1} t , r_\eps(x) \right)} \right|^2 {+} \eps^2 \left|\JUMP{u_{3,\eps} \left(\eps^{-1} t , r_\eps(x) \right)} \right|^2 \right)  \dd \Surf(x) 
\\
&\qquad  + 
\RCO \mathrm{b}_\eps \EEE   P(\RCO r_\eps^{-1} ( Z_\eps(\eps^{-1} t) ) \EEE,\GC)
\\ &  \leq 
 \int_{\GC} \tfrac{\kappa}{2}   \widetilde{z}(r_\eps(x)) 
\left( \left|\JUMP{u_{1,\eps} \left(\eps^{-1} t , r_\eps(x) \right)} \right|^2 {+} 
\left|\JUMP{u_{2,\eps} \left(\eps^{-1} t , r_\eps(x) \right)} \right|^2 {+} \eps^2 \left|\JUMP{u_{3,\eps} \left(\eps^{-1} t , r_\eps(x) \right)} \right|^2 \right) \dd \Surf(x)
\\
& \qquad 
+\RCO \mathrm{b}_\eps \EEE P( \RCO  r_\eps^{-1} (\widetilde{Z})\EEE),\GC)\
+\int_{\GC} (a_\eps^0{+}a_\eps^1) |\widetilde{z}(r_\eps(x)){-}z_\eps(\eps^{-1} t, r_\eps(x))| \dd \Surf(x) \\ & \qquad  \text{for all }
 \widetilde{z} \in \SBV(\GC;\{0,1\}) \text{ with } 0\leq\widetilde{z}\leq \rese z\eps(\eps^{-1} t)  \ \aein \  \GC  \text{ and for every } t \in [0,\infty),
 \end{aligned}
 \]
 whence we immediately infer \eqref{reduced-semistab-eps}.
 \medskip
 \noindent
 
\paragraph{$\vartriangleright$ Energy-dissipation balance \eqref{EDB-eps}:} 
Recall that, by Thm.\ \ref{thm:RT} the  \emph{Balanced} $\SE$ solutions $(u_\eps,z_\eps)$ fulfill \eqref{enineq} as an energy-dissipation \emph{balance} 
on any arbitrary sub-interval of $[0,\infty)$. We consider it 
on the interval $[\eps s, \eps t] $ 
 for all $0\leq s \leq t <\infty$
and divide it by $\frac1\eps$.  We write explicitly the single contributions to the energy functional $\calE$,  we perform the  change of variables $x \to r_\eps(x)$ and,   for  the temporal variable $\tau$,  the change $\tau\to \eps^{-1} \tau$. \EEE Repeating the same calculations as in the previous lines, we end up with the following identity 
\begin{equation}
\label{enid-eps-expl}
\begin{aligned}
&
\frac{\varrho_\eps}2  \eps^2 \int_{\Omega} \sum_{i=1}^2 |\reseid u\eps i(t)|^2 \dd x +
\frac{\varrho_\eps}2 \int_{\Omega} |\reseid u\eps 3(t)|^2 \dd x 
+\eps \int_s^t \int_{\Omega{\setminus}\GC}  \bbD_\eps e^\eps (\resed u\eps) {:} e^\eps (\resed u\eps) \dd x \dd r
+
\Var_{\mathsf{R^\eps}}(\rese z\eps, [s,t])
\\
&
\quad +\int_{\Omega{\setminus}\GC} \tfrac12 \bbC e^\eps(\rese u\eps(t)) {:} e^\eps(\rese u\eps(t))  \dd x +
\RCO \nu_\eps \EEE
\int_{\GC} \widehat{\alpha}_\lambda (\JUMP{\resei u \eps1(t), \resei u \eps2(t), 0}) \dd \Surf (x)
- \pairing{}{\Spvr}{\rese F \eps(t)}{\rese u\eps(t)}
\\
& \quad 
+ b_\eps \calG(\rese z\eps(t))+\int_{\GC} \tfrac \kappa 2 \rese z\eps(t) |\JUMP{\rese u\eps(t)}|^2 \dd \Surf(x)
\\
&
=
\frac{\varrho_\eps}2  \eps^2 \int_{\Omega} \sum_{i=1}^2 |\reseid u\eps i(s)|^2 \dd x +
\frac{\varrho_\eps}2 \int_{\Omega} |\reseid u\eps 3(s))|^2 \dd x 
\\
& \quad +
\int_{\Omega{\setminus}\GC} \tfrac12 \bbC e^\eps(\rese u\eps(s)) {:} e^\eps(\rese u\eps(s))  \dd x 
+\RCO \nu_\eps \EEE\int_{\GC} \widehat{\alpha}_\lambda (\JUMP{\resei u \eps1(s), \resei u \eps2(s), 0}) \dd \Surf (x)
- \pairing{}{\Spv}{\rese F \eps(s)}{\rese u\eps(s)}
\\
& \quad 
+ \mathrm{b}_\eps   \calG(\rese z\eps(s))+\int_{\GC} \tfrac \kappa 2 \rese z\eps(s) |\JUMP{\rese u\eps(s)}|^2 \dd \Surf(x)
\\
& 
\quad
- \int_s^t \int_\Omega \resed f\eps \rese u\eps \dd x \dd r 
+  \int_s^t  \int_{\Omega{\setminus}\GC}\bbC e^\eps (\resed w\eps) {:} e^\eps (\rese u\eps) \dd x \dd r 
+ \eps \int_s^t \int_{\Omega{\setminus}\GC}  \bbD_\eps e^\eps (\resedd w\eps) {:} e^\eps (\rese u\eps) \dd x \dd r 
\\
& \quad 
+ \eps^2 \int_s^t   \int_\Omega \varrho_\eps  \sum_{i=1}^2 \reseiddd w\eps i  \resei u\eps i \dd x \dd r
+ \int_s^t  \int_\Omega \varrho_\eps \reseiddd w\eps 3  \resei u\eps 3 \dd x\dd r  \,,
\end{aligned}
\end{equation}
namely  \eqref{EDB-eps}.  This finishes the proof. 
\end{proof}
\section{Our  dimension \EEE reduction results}
\label{s:main-results}
Prior to stating the main results of this paper, we need to introduce some notation.
We denote by $\KL$ the  Kirchhoff-Love space
\begin{equation}
\label{KL-definition}
 \KL: = \{ u \in H^1(\Omega;\R^3)\, : (e(u))_{i,3} =0 \text{ for all } i=1,2,3\}. 
\end{equation}
 We will also use the notation
\begin{equation}
\label{KLGD}
\KLGD: = \{ u \in H_{\Dir}^1(\Omega{\setminus}\GC;\R^3)\, : (e(u))_{i,3} =0 \text{ for all } i=1,2,3\}\,.
\end{equation}
We recall (see, e.g., \cite[Theorem 1.7-1]{ciarlet2}) \EEE 
that,   for a given $w \in H^1(\Omega;\R^3)$, we
have $ w \in \KL$ if and only if
  $w_3 \in  H^2(\omega)$
 and
 there exists $\overline{w} = (\overline{w}_1,\overline{w}_2) \in  H^1(\omega;\R^2)$ \EEE
 such that
 \begin{equation}
 \label{KL-decomposition}
  w (x',x_3)= \left(
  \begin{array}{ll}
  &
  \overline{w}_1(x') - x_3   \partial_{1}   w_3(x')
  \smallskip
  \\
  &
  \overline{w}_2(x') - x_3  \partial_{2}  w_3(x')
    \smallskip
  \\
  &
w_3(x')
  \end{array} 
   \right) \qquad \text{for almost all } (x',x_3) \in \Omega.
 \end{equation}
  Clearly, the very same characterization holds for 
 $\KLGD$, with the spaces $H_{\dir}^2(\omega{\setminus}\gC)$ 
 and $H_{\dir}^1(\omega{\setminus}\gC;\R^2)$. \EEE
  For later convenience we also introduce the space
 \begin{equation}
 \label{H1KL}
 \begin{aligned}
 W_\loc^{1,2}(0,\infty; \KL) := \{ v \in  W_\loc^{1,2}(0,\infty; 
  H^1(\Omega;\R^3)) \EEE \,:   &\,    v(t) \in \KL, 
 \\
 &   \dot{v}(t) \in \KL \ \foraa t \in (0,T)\}\,,
 \end{aligned}
 \end{equation}
  and, analogously, the space $W_\loc^{1,2}(0,\infty; \KLGD)$. \EEE
 We stress that the second condition in the above definition is redundant, but we have preferred to state it this way just for clarity.
In view of the above characterization of $\KL$, we have that  $v \in  W_\loc^{1,2}(0,\infty; \KL)$
if and only if 
there exist functions  $\overline v \in W_\loc^{1,2}(0,\infty;    H^1(\omega;\R^2))$  \EEE  and 
$v_3 \in  W_\loc^{ 1,2}(0,\infty;   H^2(\omega))$ \EEE   such that \eqref{KL-decomposition} holds. \EEE
 \par
 We also introduce the operator 
 $
 \mathbb{M}:  \R_\sym^{2 \times 2}\to \R_\sym^{3\times 3} 
 $
 defined by 
 \begin{equation}
 \label{def-oper-M}
 \begin{gathered}
 \Xi =  \left( \xi_{ij}\right)_{i,j=1,2}\EEE \mapsto  \mathbb{M} \Xi = \left( 
 \begin{array}{llll}
 \xi_{11} & \xi_{12} & \lambda_1(\Xi)
 \smallskip
 \\
 \xi_{12} \EEE& \xi_{22} & \lambda_2(\Xi) 
  \smallskip
 \\
  \lambda_1(\Xi) &  \lambda_2(\Xi) &  \lambda_3(\Xi)
 \end{array}
 \right)
 \end{gathered}
 \end{equation}
with 
\[
 ( \lambda_1(\Xi), \lambda_2(\Xi), \lambda_3(\Xi)) : = \mathrm{Argmin}_{(\lambda_1,\lambda_2,\lambda_3) \in \R^3}    \Lambda_{\bbC}\EEE\left( 
 \begin{array}{llll}
 \xi_{11} & \xi_{12} & \lambda_1
  \smallskip
 \\
  \xi_{12} \EEE & \xi_{22} & \lambda_2
  \smallskip
 \\
  \lambda_1 &  \lambda_2  &  \lambda_3
 \end{array}
 \right) 
 \]
  where  $ \Lambda_{\bbC}: \R_\sym^{3\times 3}\to [0, \infty)$ is the quadratic form associated with $\bbC$, defined by
 $ \Lambda_{\bbC}(A): = \tfrac12 \bbC A{:} A$  for every $A\in \R_\sym^{3\times 3}$\,.  \EEE
 Following \cite{Maggiani-Mora}, we observe that the triple $ ( \lambda_1(\Xi), \lambda_2(\Xi), \lambda_3(\Xi)) $ can be characterized as the unique solution of the linear system
\begin{equation}
 \label{properties-CM1}
 \bbC\mathbb{M}\Xi  :  \left( 
 \begin{array}{llll}
 0 & 0 & \zeta_1
 \smallskip
 \\
 0 & 0 & \zeta_2
  \smallskip
 \\
  \zeta_1 &  \zeta_2 &  \zeta_3
 \end{array}
 \right) =0 \qquad \text{for all } (\zeta_1,\zeta_2,\zeta_3) \in \R^3.
 \end{equation}
  Equivalently, $\Xi\mapsto \mathbb{M}\Xi$ 
\EEE is a linear map, fulfilling
 \begin{equation}
 \label{properties-CM2}
 (\mathbb{C}\mathbb{M}\Xi)_{i3} =  (\mathbb{C}\mathbb{M}\Xi)_{3i} =0 \qquad \text{for all } i =1,2,3. 
 \end{equation} \EEE
We then define the \emph{reduced} elasticity tensor $\bbC_\red:   \R_\sym^{2 \times 2} 
\to \R_\sym^{3\times 3} $  by
 \begin{equation}
 \label{reduced-elasticity-tensor}
 \bbC_\red \Xi: = \bbC \bbM \Xi \qquad\text{for all } \Xi \in \R_\sym^{2 \times 2} \,.\EEE
 \end{equation}
  We note that, taking into account  \eqref{properties-CM2}, we can identify $\bbC_\red \Xi$ with an element of $\R_\sym^{2 \times 2} $\,.\EEE
  \par
 For 
  Theorem \ref{mainthm-2} ahead, \EEE
  it will be expedient to require that, either the elasticity tensor $\bbC$ (cf. 
 Remark \ref{rk:expl-hyp-w}) or \emph{both} the elasticity and 
viscosity tensors $\bbC$ and $\bbD$ (cf. Hypothesis E) comply with the additional property  
\begin{align}
&
\label{condX-1}
\bbA_{i3kl} = 0  &&   \text{for all } i \in \{1,2,3\} \   \text{and } k,l \in \{1,2\},
\end{align}
where  $\bbA \in \R^{3\times 3\times 3\times 3}$ denotes a symmetric tensor
in the sense of \eqref{assCD}.
A crucial outcome of \eqref{condX-1} is that, if we multiply by $\bbA$ a  symmetric  matrix \EEE  $E=(e_{ij})$   that is,  additionally, \EEE `only planar', i.e.
\begin{equation}
\label{corollary-of-condX}
\begin{cases}
E \in \R_\sym^{3 \times 3},
\\
e_{k3} = e_{3\ell} =0 \quad k, \ell \in \{1,2,3\}
\end{cases}
\text{ then  $\bbA E$ is also planar, i.e. }
(\bbA E)_{i3} =  (\bbA E)_{3j} =0 \quad i, j \in \{1,2,3\}\,.
\end{equation}
Indeed,  it suffices to use that 
\[
(\bbA E)_{ij}  = \sum_{k,\ell =1}^3 \bbA_{ijk\ell} e_{k\ell} 
=\sum_{k,\ell =1}^2 A_{ijkl} e_{k\ell}, \quad \text{ whence }\quad 
(\bbA E)_{i3} \stackrel{\eqref{condX-1}}{=}    \sum_{k,\ell =1}^2  \bbA_{i3kl} e_{k\ell}  =0.
\]
\par
 Suppose now that $\bbC$ complies with condition \eqref{condX-1}, too (we emphasize that we shall require it for Theorem 
\ref{mainthm-2}, only). In that case, we have the following identification for $\bbM$. \EEE
\begin{lemma}
\label{lm:identif}
Assume that, in addition to  \eqref{assCD},  the elasticity tensor   $\bbC$ satisfies 
condition {\eqref{condX-1}}. 
Then, for every $v \in \KL$ we have
\begin{equation}
\label{identification-M-KL}
 \bbM \epl(v) = e(v) = 
\begin{pNiceArray}{cw{r}{0.3cm}c}[margin]
\Block{2-2}{\epl(v)} & &  0 \\
& &  0 \\
0 & 0& 0 
\end{pNiceArray} \EEE
\end{equation}
\end{lemma}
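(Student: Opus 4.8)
The idea is to read off both equalities in \eqref{identification-M-KL} directly from the definitions, using condition \eqref{condX-1} only through its structural consequence \eqref{corollary-of-condX}.

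First I would dispatch the second identity in \eqref{identification-M-KL}, which is immediate: by the very definitions of $\KL$ and of $\epl$, for $v\in\KL$ the strain $e(v)$ is symmetric, its $(i,3)$ and $(3,\ell)$ entries vanish for all $i,\ell\in\{1,2,3\}$, and its upper-left $2\times2$ block is $\epl(v)$. Hence $E:=e(v)$ is exactly a matrix of the ``only planar'' type appearing in \eqref{corollary-of-condX}, with planar block $\epl(v)$, which is what the second equality asserts.

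It then remains to prove $\bbM\epl(v)=e(v)$. Here I would invoke the characterization recalled in \eqref{properties-CM1}: $\bbM\Xi$ is the unique matrix whose upper-left block is $\Xi$, which is symmetric, and which is $\bbC$-orthogonal, in the sense of \eqref{properties-CM1}, to every matrix supported on the entries $(1,3),(3,1),(2,3),(3,2),(3,3)$. Since $e(v)$ is precisely of this form with $\Xi=\epl(v)$ and with the $\lambda$-slots filled by zeros, it suffices to check that $E=e(v)$ satisfies the orthogonality condition. This is where \eqref{condX-1} enters: applying \eqref{corollary-of-condX} with $\bbA=\bbC$ and $E=e(v)$ (admissible by the previous step), we obtain that $\bbC E$ is again planar, i.e.\ $(\bbC E)_{i3}=(\bbC E)_{3j}=0$ for all $i,j\in\{1,2,3\}$. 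Therefore, for every $(\zeta_1,\zeta_2,\zeta_3)\in\R^3$,
\[
\bbC E:\left(\begin{array}{lll} 0 & 0 & \zeta_1 \\ 0 & 0 & \zeta_2 \\ \zeta_1 & \zeta_2 & \zeta_3 \end{array}\right)
= 2\zeta_1(\bbC E)_{13} + 2\zeta_2(\bbC E)_{23} + \zeta_3(\bbC E)_{33} = 0,
\]
so $E$ satisfies \eqref{properties-CM1}. By the uniqueness asserted there, $\bbM\epl(v)=E=e(v)$, which is \eqref{identification-M-KL}.

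I do not anticipate any genuine obstacle: the statement is essentially a bookkeeping consequence of the definition of $\KL$ together with the decoupling property \eqref{corollary-of-condX} of tensors satisfying \eqref{condX-1}. The only point needing a little care is to phrase the argument through the linear characterization \eqref{properties-CM1} of $\bbM$ rather than through its original argmin definition; this is harmless, since that equivalence was already recorded in the excerpt (following \cite{Maggiani-Mora}).
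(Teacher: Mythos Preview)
Your proposal is correct and follows essentially the same approach as the paper: both argue via the characterization \eqref{properties-CM1} of $\bbM$, observe that $e(v)$ has the required planar structure for $v\in\KL$, and then invoke \eqref{corollary-of-condX} to conclude that $(\bbC e(v))_{i3}=(\bbC e(v))_{3j}=0$, whence the orthogonality condition and uniqueness finish the argument. Your write-up is simply more explicit than the paper's.
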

\begin{proof}
It suffices to recall that $\bbM \epl(v)$ is characterized as the  $(3{\times}3)$-matrix  whose planar part coincides with $\epl(v) $, and satisfying \eqref{properties-CM2}. Now, thanks to 
\eqref{corollary-of-condX} we have that 
$
(\bbC  e(v))_{i3} = (\bbC  e(v))_{3j} =0
$ for $i,j\in \{1,2,3\}$, 
and then \eqref{identification-M-KL} ensues. 
\end{proof}


\begin{remark}
\upshape
 Under condition \eqref{condX-1}, the characterization of $\bbM$ obtained in Lemma \ref{lm:identif} leads  \EEE
%
to a structure of the reduced elasticity tensor coherent with that of the tensor $\mathbb{C}^0$ in \cite[Section 4]{FreParRouZan11}. 
\end{remark}
\subsection{Our first convergence result: removing the damping}
\label{ss:Thm1}
Let $(\eps_k)_k \subset (0,\infty)$ be a sequence converging to zero as $k \to\infty$.
 For our first dimension reduction result we will confine the analysis to the following setup.
\begin{condition}
\label{cond:1}
The coefficients $(\varrho_{\eps_k})_k$ and the tensors $(\mathbb{D}_{\eps_k})_k$ satisfy
\begin{equation}
\label{e:condition-one}
\varrho_{\eps_k}=0 
\text{ for all } k\in \N \,, \qquad \eps_k^{\beta} \mathbb{D}_{\eps_k} \to 0  \text{ for some } \beta \in (0,1)\,.
\end{equation}
\end{condition}
\noindent
The scaling condition on $(\mathbb{D}_{\eps_k})_k$ is compatible with the case
 $\mathbb{D}_{\eps_k} \equiv \mathbb{D}$ but clearly allows for more general situations, including
a (controlled) blow-up of $(\mathbb{D}_{\eps_k})_k$. Anyhow, it
will lead to the disappearance of the damping term in the momentum balance. As we will  explain 
in more detail in Remark \ref{rmk:why-we-cant} ahead, the vanishing-thickness analysis can be carried out under this condition only if no inertial terms are present in the original momentum balance for fixed $\eps_k>0$;  that is why, in \eqref{e:condition-one} we require $\varrho_{\eps_k} \equiv 0 $. 
The related PDE system thus ceases to be an inertial system; 
we shall refer to system 
$\grsysepsk$ from \eqref{R-resc}--\eqref{defEk-resc}
  as a \emph{gradient system}.  \EEE
\par
\par 
 In Hyp.\ \ref{Hyp-B} ahead
we   specify  our conditions on the 
 constants $(a_0^{\eps_k})_k$ and $(a_1^{\eps_k})_k$, and more prominently on the \EEE
parameters $(\mathrm{b}_{\eps_k})_k$, $(\nu_{\eps_k})_k$ featuring in the expression \eqref{defEk-resc-surf}  of the energies 
$(\rese E{\eps_k}_{\mathrm{surf}})_k$. \EEE
Since the scaling from Condition \ref{cond:1} leads to 
an \emph{undamped} system
 in  the vanishing-thickness limit,  \EEE
  our conditions on the  sequences $(\mathrm{b}_{\eps_k})_k$ and  $(\nu_{\eps_k})_k$
  in \eqref{hypB-1} below are meant to   \EEE
   somehow compensate the lack of compactness information due to the missing viscosity in the momentum balance. In particular,
 with \eqref{hypB-1} we require that the parameters $\mathrm{b}_{\eps_k}$ are  strictly positive from a certain $\bar k$ on: in fact, we will rely on the $\SBV$ regularizing term to gain extra spatial compactness for the adhesion variable. At the same time, we  will need to impose that the sequence  $(\nu_{\eps_k})_k$ is null because we will not be able to handle the term penalizing the failure of the non-interpenetration  constraint $\JUMP{u}{\cdot} n \geq 0 $. 
 Likewise, 
  the limit surface energy \eqref{defE-resc-surf-limit} reflects the fact that  $\nu = 0$; 
instead, in  the alternative Hypothesis \ref{Hyp-D} (cf.\ Sec.\ \ref{ss:mainthm-2} ahead)  we will 
allow $\nu$ to be positive and the term penalizing the interpenetration  will feature in \eqref{defEve-resc-surf-limit}. 
\EEE 
\begin{hypx}[{Material parameters}]
\label{Hyp-B}
We suppose that 
\begin{subequations}
\label{hypB}
\begin{align}
&
\label{hypB-1}
\exists\, \lim_{k \to \infty}\mathrm{b}_{\eps_k}=\mathrm{b}>0, \quad \exists\, \lim_{k\to\infty}\nu_{\eps_k} =\nu=0, 
\intertext{and that}
&
\label{parameters-below}
\exists\, \lim_{k \to \infty}
a_0^{\eps_k}= a_0>0, \quad \exists\, \lim_{k\to\infty}a_1^{\eps_k} =a_1>0\,.
\EEE
\end{align}
\end{subequations}
%
\end{hypx}
\par
 Let us now    specify the
conditions on the data $(\rese f{\eps_k})_k, \, (\rese w{\eps_k})_k $
under which we will perform our asymptotic analysis as ${\eps_k}\down0$ for  the systems
$\grsysepsk$.
We mention in advance that \eqref{convergences-f} will be the same for Thms.\ \ref{mainthm-1} and  \ref{mainthm-2}. 
\begin{hypx}[{External forces}]
\label{hyp:data}
We suppose that
 \EEE there exists $\sff \in W_\loc^{1,1}(0,\infty;L^2(\Omega;\R^3))$ such that 
 \begin{equation}
 \label{convergences-f}
 \rese f{\eps_k} \to \sff \text{ in } W_\loc^{1,1}(0,\infty;L^2(\Omega;\R^3)).
 \end{equation}
 Additionally, we assume that
\begin{subequations}
\label{cond-rescal-data}
\begin{align}
& 
\label{bounds-w}
\begin{aligned}
\exists\, C_w>0 \ \  \forall\, k \in \N\, : \qquad   & \|e^{\eps_k}(\rese w{\eps_k})\|_{W^{1,2}_\loc(0,\infty;L^2(\Omega;\R^{3\times 3}))} +   \eps_k^{1-\beta}  \|e^{\eps_k}(\rese w{\eps_k})\|_{W_\loc^{2,2}(0,\infty;L^2(\Omega;\R^{3\times 3}))}   \leq C_w \,,\EEE
 \end{aligned}
\end{align}
with $\beta 
\in (0,1)$ from Condition \ref{cond:1}. \EEE
 Further, we require that  there exists a function $\sfw \in W_\loc^{1,2}(0,\infty; \KL)$, cf.\ \eqref{H1KL}, 
 such that,
  as ${\eps_k}\down0$
 \begin{equation}
 \label{convergences-w}
  \begin{cases}
 \rese w{\eps_k} \to \sfw  & \text{in }    W_\loc^{1,2}(0,\infty; H^1(\Omega;\R^3)),  \EEE 
 \\
  \rese w{\eps_k}(0) \to \sfw(0)  & \text{in }  H^1(\Omega;\R^3), \EEE
  \end{cases}
 \end{equation}
 as well as 
 \begin{equation}
 \label{further-cvg-w}
e^{\eps_k}(\rese w{\eps_k}) \to  \bbM \epl(\sfw) \quad \text{in } W_\loc^{1,2}(0,\infty;L^2(\Omega;\R^{3\times 3}))\,. 
 \end{equation}
 \EEE
\end{subequations}
\end{hypx}
\begin{remark}[On conditions \eqref{convergences-f} \& \eqref{cond-rescal-data}]
    \label{rmk:criticism}
\upshape
Along the footsteps of \cite{Maggiani-Mora}, we have chosen to state our convergence and integrability conditions for the body forces and the Dirichlet data, on the rescaled level, only. To translate our requirements in terms of the unrescaled data $f_\eps$ and $w_\eps$, we may adapt the ansatz that the unrescaled forces have themselves the structure
\[
f_\eps (x,t) = \mathrm{F}(x,\eps t), \quad 
w_\eps (x,t) = \mathrm{W}(x,\eps t)
\]
for some $\mathrm{F} \in W_\loc^{1,1}(0,\infty;L^2(\Omega;\R^3))$ and $\mathrm{W} \in 
W_\loc^{2,1}(0,\infty;H^1(\Omega;\R^3)){\cap}
W_\loc^{3,1}(0,\infty;L^2(\Omega;\R^3))$. 
 
 On the other hand, \eqref{convergences-f} \& \eqref{cond-rescal-data} seem to be necessary for the analysis and reflect the dynamical features of the model, just like in \cite{Maggiani-Mora}. 
\end{remark} \EEE
\begin{remark}[More on condition \eqref{further-cvg-w}]
\label{rk:expl-hyp-w}
\upshape
A few comments on  \eqref{further-cvg-w} are in order:
 if, \emph{in addition}, 
the elasticity tensor complies with condition \eqref{condX-1}, 
then by  Lemma \ref{lm:identif}  
we have that 
$ \bbM \epl(\sfw)  = e(\sfw)$.  In this setting, \EEE a sufficient condition for \eqref{further-cvg-w} is that the loads  $(\rese w{\eps_k})_k$ are themselves Kirchhoff-Love, i.e.\ 
$ \rese w{\eps_k} \in  W_\loc^{1,2}(0,\infty; \KL)$. Then, from  the convergence $\rese w{\eps_k} \to \sfw $  in $    W_\loc^{1,2}(0,\infty;  H^1(\Omega;\R^3)) \EEE  $ 
we have that 
\[
 e^{\eps_k}(\rese w{\eps_k})  =
 \begin{pNiceArray}{cw{r}{0.7cm}c}[margin]
\Block{2-2}{\epl(\rese w{\eps_k})} & &  0 \\
& &  0 \\
0 & 0& 0 
\end{pNiceArray} \EEE
\, \longrightarrow \, e(\sfw) = \bbM \epl(\sfw) 
\]
 in $W_\loc^{1,2}(0,\infty;L^2(\Omega;\R^{3\times 3}))$.
 Nevertheless, we emphasize that, in Theorem 
 \ref{mainthm-1} we shall not require 
 condition  \eqref{condX-1}  for the elasticity tensor. \EEE
\end{remark}
\EEE
\par
 Finally,  in  Hypothesis  \ref{Hyp-C}  
 we impose suitable convergence conditions for  the initial data 
 $(\rese u{\eps_k}_0, \rese z{\eps_k}_0)_k$
 (since  we have dropped inertia in the momentum balance, it is not relevant to consider a sequence of initial velocities $(\resed u{\eps_k}_0)_k$). \EEE
 We mention in advance that the requirements on the limit $\sfz_0 $ of the sequence  $(\rese z{\eps_k}_0)_k$ are formulated in such a way 
 as to encompass both the case $\sfz_0 \in  \SBV(\GC; \{0,1\}) $, and the case in which $\sfz_0$ is just in $L^\infty(\GC)$, 
 even though with Hyp.\ \ref{Hyp-B} we clearly  envisage the presence of the $\SBV$-regularizing term in the 
 limiting system. Nonetheless, we 
 have chosen to formulate Hyp.\ \ref{Hyp-C} in a more flexible way 
   in view of the forthcoming Theorem  \ref{mainthm-2}, for which we will allow the  parameters $(\mathrm{b}_{\eps_k})_k$ to converge to
   $\mathrm{b}=0$, as well.  
    Furthermore,
  in accordance with the fact that the limiting displacement 
  $\sfu$ will satisfy $\sfu(t) \in   \KLGD$  \EEE for almost all $t\in (0,T)$, we will suppose that 
  $\sfu_0 \in   \KLGD$, \EEE  too. 
   \EEE
\begin{hypx}[{Initial data}]
\label{Hyp-C}
We suppose 
there exist   $(\sfu_0,\sfz_0) \in 
\Spu{\times} L^\infty(\GC)
$, 
with 
\begin{equation}
\label{KL+SBV}
\begin{cases}
\sfu_0 \in   \KLGD,\EEE 
\\
 \sfz_0  \in \SBV(\GC; \{0,1\})  \qquad \text{ if $\mathrm{b}>0$},
 \end{cases}
 \end{equation}
\EEE  fulfilling 
the semistability condition for $t=0$
\begin{equation}
\label{semistab-t-0}
\begin{aligned}
&
 \int_{\GC} \tfrac \kappa 2  \sfz_0 |\JUMP{\sfu_0}|^2 \dd \Surf(x) + 
\mathrm{b} P(\rese Z{}_0,\GC)  - \int_{\GC} a_0  \sfz_0 \dd \Surf(x)
\\
&
 \leq  \int_{\GC} \tfrac \kappa 2   \widetilde{z} |\JUMP{\sfu_0}|^2 \dd \Surf(x) 
+\mathrm{b} P(\widetilde{Z},\GC) -\int_{\GC}  a_0  \widetilde{z} \dd \Surf(x) 
+\int_{\GC} a_1|\widetilde{z}{-}\sfz_0| \dd \Surf(x) \EEE
\end{aligned}
 \end{equation}
 for all $ \widetilde{z} \in L^1(\GC)  $ with $\widetilde{z}\leq \sfz_0 $  a.e.\ in $\GC$,  ($\widetilde{z} \in \SBV(\GC;\{0,1\})$ if $\mathrm{b}>0$),
 and satisfying 
 \begin{subequations}
 \label{convergences-initial-data}
 \begin{align}
 &
  \label{convergences-initial-data-a}
 \rese u{\eps_k}_0 \weakto \rese u{}_0  \text{  in }   \Spu \EEE ,  \qquad  \rese z{\eps_k}_0 \weaksto \rese z{}_0   \text { in } L^\infty(\GC), 
 \intertext{(with $\rese z{\eps_k}_0 \weakto \rese z{}_0  $ in $\SBV (\GC;\{0,1\})$ if $\mathrm{b}>0$), and }
 &
  \label{convergences-initial-data-b}
 \rese  E{\eps_k}(0,\rese u{\eps_k}_0, \rese z{\eps_k}_0)\to   \rese E{} (0,\sfu_0, \sfz_0) \,,\EEE \quad 
 \end{align}
\end{subequations}
 where the energy functional $\rese E{}$ will be defined by the bulk and surface contributions in \eqref{defE-resc-limit} below.
\end{hypx}
\noindent \EEE
\par
Let us now introduce the
 limiting 
  $1$-homogeneous dissipation potential
     associated with the constant  $a_1$ from 
\eqref{parameters-below}:
\begin{align}
&
\label{R-resc-limit}
\rese R {}:\Spz\to[0,\infty]\,,\;
\qquad  \rese R{}(\dot \sfz):=\int_{\GC} \mathrm{R}(\dot \sfz)\,\mathrm{d}\Surf(x)\,,\;
\quad \mathrm{R}(\dot \sfz):=\left\{
\begin{array}{ll}
a_1|\dot \sfz|&\text{if }\dot \sfz\leq 0\,.\\
\infty&\text{otherwise}
\end{array}
\right.
\end{align}
(clearly, $\rese R{}$ coincides with the dissipation potential from  \eqref{defRk}, but here we are  using the \textsf{sans} font for  notational consistency). 
Finally, we will denote by $\rese E{} : [0,\infty) \times H_{\GD}^1(\Omega{\setminus}\GC;\R^3) \times L^1(\GC)  \to\R\cup\{\infty\}$ the energy functional 
given by $ \rese E{} =  \rese{E}{}_{\mathrm{bulk}} +  \rese{E}{}_{\mathrm{surf}} $,  where the bulk energy is 
 \begin{subequations}
 \label{defE-resc-limit}
\begin{align}
 \label{defE-resc-bulk-limit}
 &
 \rese{E}{}_\bulk  (t,\sfu) :=
 \int_{\Omega{\setminus}\GC}\tfrac{1}{2}\mathbb{C}_{\mathrm{r}} \, \epl(\sfu){:} \epl(\sfu) \dd x  - \pairing{}{\Spvr}{\rese F{} (t)}{\sfu} 
 \intertext{with 
 $\mathbb{C}_{\mathrm{r}}$ the \emph{reduced} elasticity tensor from \eqref{reduced-elasticity-tensor},  the operator $\epl$ from \eqref{e-plan}, and 
 $\rese F{}: [0,T]\to \Spu^*$ given by }
 &
 \nonumber
 \begin{aligned}
&
 \pairing{}{\Spvr}{\rese F {}(t)}{\sfu} :=   \int_{\Omega} \rese f{}(t) {\cdot} \sfu \dd x -
 \int_{\Omega{\setminus}\GC} \bbC_{\mathrm{r}}  \epl (\rese w{}(t)){:} \epl(\sfu) \dd x, 
\end{aligned}
\intertext{and the surface energy  is}
& 
 \label{defE-resc-surf-limit}
 \rese{E}{}_{\mathrm{surf}} (\sfu,\sfz) = 
\rese {J}{} (\sfu,\sfz)  + \int_{\GC} \left(I_{[0,1]}(\sfz){-}a_0 \sfz\right) \dd \Surf (x) +\mathrm{b}\calG(\sfz) \quad 
\text{with }
 \rese {J}{} (\sfu,\sfz) = \int_{\GC}  \tfrac \kappa 2   \sfz Q(\JUMP{\sfu})\dd \Surf(x)\,.
 \end{align}
\end{subequations}
Indeed, the definition of $\rese{E}{}_{\mathrm{surf}} $  reflects the fact that $\nu_{\eps_k} \to \nu=0$ by Hypothesis \ref{Hyp-B}. 
\par
 We are now in a position to state our first convergence result.  As previously mentioned, we will prove convergence to a Kirchhoff-Love displacement (cf.\ \eqref{KL-decomposition}): in particular, 
  the third component $\sfu_3$ will only depend on the planar variable $x'\in \omega$. 
\begin{maintheorem}
\label{mainthm-1}
Under Condition \ref{cond:1}, 
assume Hypotheses \ref{Hyp-B},
\ref{hyp:data}, 
  \ref{Hyp-C}. 
 Let 
$(\sfu_{\eps_k}, \sfz_{\eps_k})_k$ be a sequence of Balanced $\SE$ solutions 
to the  gradient systems $\grsysepsk_k$,  fulfilling \eqref{weak-mom-eps}--\eqref{EDB-eps}  with 
$\varrho_{\eps_k} 
\equiv 0$ for all $k\in \N$. 
\par
Then, there exist a (not relabeled) subsequence, and a pair $(\sfu,\sfz)$ with 
\begin{equation}
\label{reg-uz-DR}
\begin{aligned}
&
\sfu \in L_{\loc}^\infty (0,\infty; \Spu) 
  \\
  &  \text{ with } 
  \sfu(t) \in   \KLGD   \ \foraa t \in (0,\infty) \text{ and }   \sfu_3 \in  L_\loc^\infty(0,\infty;  H_{\dir}^2(\omega{\setminus}\gC\EEE))  \EEE
\\
&
\sfz \in L_{\loc}^\infty(0,\infty; \SBV(\GC;\{0,1\})) \cap  \BV_{\loc}(0, \infty ;L^1(\GC))\,, \EEE 
\end{aligned}
\end{equation}
such that the following convergences hold as $k\to\infty$:
\begin{equation}
\label{pointwise-convergences}
\begin{aligned}
& \rese u{\eps_k}(t)  \weakto \EEE  \sfu(t) &&  \text{in }  \Spu  && \foraa\, t   \in (0,\infty), \EEE
\\
&
\rese z{\eps_k}(t) \weakto \sfz(t)  &&  \text{in }   \SBV(\GC;\{0,1\}) && \text{for all } t   \in [0,\infty), \EEE
\end{aligned}
\end{equation}
and 
$(\sfu,\sfz)$ comply with
\begin{subequations}
\label{prob-form-Thm32}
\begin{enumerate}
\item
the momentum balance equation for almost all $t\in (0,\infty)$ 
\begin{equation}
\label{mom-bal-DR} 
\begin{aligned}
&
 \int_{\Omega{\setminus}\GC} \bbC_{\mathrm{r}} \epl(\rese u{}(t))  {:} \epl(\varphi)     \dd x   
+ \int_{\GC} \kappa\rese z{}(t)\JUMP{\rese u{}(t)}{\cdot}\JUMP{\varphi}\,\mathrm{d}\Surf (x) 
\\
&
=   \int_\Omega \rese f{}(t){\cdot}\varphi \dd x 
- \int_{\Omega{\setminus}\GC} \bbC_{\mathrm{r}} \epl(\rese w{}(t))  {:} \epl(\varphi)     \dd x   \EEE  \quad \text{for all } \varphi \in  \KLGD; 
\end{aligned}
\end{equation}
\item the semistability condition
 for almost all $t\in (0,\infty)$ and for $t=0$
\begin{equation}
\label{semistab-DR}
\rese E{}(t,\sfu(t),\sfz(t)) \leq \rese E{}(t,\sfu(t),\tilde z) +
\rese R{}(\tilde z {-}\sfz(t)) \qquad \text{for all } \tilde z \in \Spz ;
\end{equation}
\item 
 the energy-dissipation  inequality
\begin{equation}
\label{EDI-lim}
\begin{aligned}
\Var_{\mathsf{R}}(\sfz, [0,t])+
 \rese E{}(t,\sfu(t),\sfz(t))
  \leq
  \rese E{}(0,\sfu_0,\sfz_0) 
+ \int_0^t \partial_t
 \rese E{}(r,\sfu(r),\sfz(r)) \,  \mathrm{d}r 
 \end{aligned}
\end{equation}
 for almost all  $t \in (0,\infty)$.  
\end{enumerate}
\end{subequations}
\end{maintheorem}
 \begin{remark}[Analysis of problem formulation \eqref{prob-form-Thm32}]\label{rk:pb1}\EEE \upshape In fact, the pair $(\sfu,\sfz)$ from Theorem \ref{mainthm-1} is a $\SE$ solution of the 
 \emph{purely rate-independent}  
  adhesive contact system
$( \rese R{}, \rese E{})$,
with the 
$1$-homogeneous dissipation potential $\mathsf{R}$ 
from \eqref{R-resc-limit}, and the driving  energy $\mathsf{E}$ \eqref{defE-resc-limit}).
 Observe that, in the  rate-independent setup, the semistability condition \eqref{semistab-DR}, 
which rephrases as 
\begin{subequations}
\label{vorrei-ma-non-posso-ahime}
\begin{equation}
\label{minimality-4-z}
\sfz(t) \in \mathrm{Argmin} \big\{ \rese E{}(t,\sfu(t),\tilde z) +
\rese R{}(\tilde z {-}\sfz(t)) \,: \ \tilde z \in  \Spz\},
\end{equation}
is coupled to the \emph{static} momentum balance, which is equivalent, by convexity of $\rese E{} (t,\cdot,z)$, to the minimality condition
\begin{equation}
\label{minimality-4-u}
\sfu (t) \in \mathrm{Argmin} \big\{ \rese E{}(t,\tilde u,\sfz(t)) \,: \ \tilde u \in   \KLGD \}\,.
\end{equation}
\end{subequations}
Still, it is easy to realize that \eqref{vorrei-ma-non-posso-ahime} are not sufficient to obtain the full stability condition
\[
\rese E{}(t,\sfu(t),\sfz(t)) \leq \rese E{}(t,\tilde u,\tilde z) +
\rese R{}(\tilde z {-}\sfz(t)) \qquad \text{for all } \tilde u \in  \KLGD, \   \tilde z \in \Spz. 
\]
We refer to 
\cite{RoThPa15SDLS} for an analysis of semistable Energetic solutions for the purely rate-independent evolution of brittle delamination.
\par
We emphasize that, unlike in the standard definition (cf.\ also Def.\ \ref{def:energetic-sol1}), 
in Theorem \ref{mainthm-1} we  obtain the semistability condition and the energy-dissipation inequality for almost all $t\in (0,\infty)$, and analogously for the momentum balance.
Nonetheless,
since \eqref{mom-bal-DR}  is equivalent to \eqref{minimality-4-u}, and for fixed $t\in [0,T]$ the functional $u\mapsto \rese E{}(t,u, \sfz(t)) $ has a unique minimum,
exploiting the fact that $\sfz(t)$ is defined at \emph{every} $t$
  it would be possible to extend $\sfu$ to a function defined on the whole of $[0,T]$ (and indeed continuous at the continuity points of the $\BV$ function $\mathsf{z}$),
   and satisfying \eqref{mom-bal-DR}  \emph{everywhere} in $[0,T]$. 
However, we would not be able to obtain \eqref{semistab-DR} and \eqref{EDI-lim} for all $t\in [0,T]$, cf.\ Remark \ref{rmk:why-we-cant} ahead.
\end{remark} 
 \begin{remark}[On the momentum balance  equation 
\eqref{mom-bal-DR}]
\label{rk:mom-balance-pb1}
\upshape Due to the Kirchhoff-Love structure of the test functions, 
\eqref{mom-bal-DR} 
cannot, a priori, be rewritten in terms of two separate momentum balance equations for the in-plane and out-of-plane displacements. This would only be possible in the case in which 
\begin{equation}
\label{eq:adh-add}
\int_{-\frac12}^{\frac12}x_3 \rese z{}(t, x',x_3) \dd x_3=0.
\end{equation}
To see this, recall the notation in \eqref{KL-decomposition} and consider test functions   $\varphi\in \KLGD$ \EEE  with $\varphi_3=0$, so that $\varphi=(\overline{\varphi}_1,\overline{\varphi}_2,0)$ with  $\overline{\varphi}: = (\overline{\varphi}_1,\overline{\varphi}_2 )  \in  H^1_{\dir}(\omega\setminus\gC;\R^2)$. \EEE
Recalling that $\Omega\setminus \GC=(\omega\setminus\gC)\times (-\tfrac12,\tfrac12)$, we find
\begin{align*}
&\int_{\Omega{\setminus}\GC} \bbC_{\mathrm{r}} (\epl(\rese u{}(t){+}\epl(\rese w{}(t)) ) {:} \epl(\varphi)     \dd x
\\
& =
\int_{\Omega{\setminus}\GC} \bbC_{\mathrm{r}} (e_{\rm plan} (\rese {\overline{u}}{}(t){+}\rese {\overline{w}}{}(t))\EEE {-}x_3(\nabla^2_{x'}\rese u{}_3(t){+}\nabla^2_{x'}\rese w{}_3(t)))  {:} \epl(\varphi)     \dd x \\
&\quad=\int_{\omega{\setminus}\gC} \bbC_{\mathrm{r}} e_{\rm plan}(\rese {\overline{u}}{}(t){+}\rese {\overline{w}}{}(t))  {:} \epl(\overline{\varphi})     \dd x' -\int_{\Omega\setminus \GC} x_3\bbC_{\mathrm{r}}(\nabla^2_{x'}\rese u{}_3(t){+}\nabla^2_{x'}\rese w{}_3(t)):e_{\rm plan}(\overline{\varphi})\dd x,
\end{align*}
 where $\nabla^2_{x'}$ denotes the Hessian with respect to the planar variable $x'=(x_1,x_2)$. \EEE
In particular, since $\rese u{}_3$ and $\rese w{}_3$ are independent of $x_3$, the latter term on the right-hand side of the above equation satisfies
\begin{equation*}
\begin{aligned}
&    \int_{\Omega\setminus \GC} x_3\bbC_{\mathrm{r}}(\nabla^2_{x'}\rese u{}_3(t){+}\nabla^2_{x'}\rese w{}_3(t)) {:} e_{\rm plan}(\overline{\varphi})\dd x
  \\
  &  =\left(\int_{-\tfrac12}^{\tfrac12}x_3 \dd x_3\right) \int_{\omega\setminus \gC} \bbC_{\mathrm{r}}(\nabla^2_{x'}\rese u{}_3(t){+}\nabla^2_{x'}\rese w{}_3(t))
  {:} e_{\rm plan}(\overline{\varphi})\dd x'=0,
  \end{aligned}
\end{equation*}
so that equation \eqref{mom-bal-DR} yields
\begin{subequations}
\begin{equation}
\label{ful-dec-1}
\begin{aligned}
&\int_{\omega{\setminus}\gC} \bbC_{\mathrm{r}} e_{\rm plan}(\rese {\overline{u}}{}(t){+}\rese {\overline{w}}{}(t))  {:} e_{\rm plan}(\overline{\varphi})     \dd x'  
+ \int_{\GC} \kappa\rese z{}(t)\JUMP{\rese {\overline{u}(t)}{}-x_3\nabla_{x'}{\rese u{}}_3{}(t)}
{\cdot} \JUMP{\overline{\varphi}}\,\mathrm{d}\Surf (x) 
\\
&
=  \int_\omega \rese {\overline{f}}{}_{\rm plan}(t)\cdot\overline{\varphi} \dd x'   \qquad \qquad \qquad \text{for all } \overline{\varphi} \in  H^1_{\dir}(\omega{\setminus}\gC;\R^2),
\end{aligned}
\end{equation}
where $\rese {\overline{f}}{}_{\rm plan}(t):=\left(\int_{-\frac12}^{\frac12} \rese f{}_1(t)\dd x_3, \int_{-\frac12}^{\frac12} \rese f{}_2(t)\dd x_3\right)$.\\

Analogously, choosing $\varphi\in  \KLGD$ \EEE with $\overline{\varphi}=0$, 
equation \eqref{mom-bal-DR} yields 
\begin{equation}
\label{ful-dec-2}
\begin{aligned}
& 
 \frac{1}{12}\int_{\omega\setminus \gC} \bbC_{\mathrm{r}}(\nabla^2_{x'}\rese u{}_3(t)+\nabla^2_{x'}\rese w{}_3(t)):\nabla^2_{x'}\varphi_3\dd x' \\&\qquad - \int_{\GC} \kappa\rese z{}(t)\JUMP{\rese {\overline{u}(t)}{} {-} x_3\nabla_{x'}{\rese u{}}_3{}(t)}\cdot x_3\JUMP{\nabla_{x'}\varphi_3}\,\mathrm{d}\Surf (x)+\int_{\GC} \kappa\rese z{}(t)\JUMP{\rese u{}_3(t)}\JUMP{\varphi_3}\,\mathrm{d}\Surf (x) 
\\
&
=  \int_\Omega \rese f{}_3(t)\varphi_3 \dd x  
- \int_\Omega (\rese f{}_1(t), \rese f{}_2(t)) {\cdot}  x_3 \nabla_{x'}\varphi_3 \dd x 
\end{aligned}
\end{equation}
\end{subequations}
for all  $\varphi_3 \in  {H^2_{\dir}(\omega{\setminus}\gamma_c})$.
 Under the additional assumption \eqref{eq:adh-add}, the two equations  \eqref{ful-dec-1} and \eqref{ful-dec-2} \EEE   would fully decouple.
In fact, using the Kirchhoff-Love structure of 
$\rese u{}$ and $\varphi$,  we would have 
\begin{equation*}
\int_{\GC} \kappa\rese z{}(t)\JUMP{\rese 
{\overline{u}(t)}{}{-} x_3\nabla_{x'}{\rese u{}}_3{}(t)}{\cdot} \JUMP{\overline{\varphi}}\,\mathrm{d}\Surf (x)= \int_{\GC} \kappa\rese z{}(t)\JUMP{\rese {\overline{u}(t)}{}}{\cdot} \JUMP{\overline{\varphi}}\,\mathrm{d}\Surf (x)
\end{equation*}
 in \eqref{ful-dec-1}  and
\begin{equation*}
    -\int_{\GC} \kappa\rese z{}(t)\JUMP{\rese {\overline{u}(t)}{}{-} x_3\nabla_{x'}{\rese u{}}_3{}(t)}{\cdot} x_3\JUMP{\nabla_{x'}\varphi_3}\,\mathrm{d}\Surf (x)=\int_{\GC} x_3^2\kappa\rese z{}(t)\JUMP{\nabla_{x'}{\rese u{}}_3{}(t)}{\cdot} \JUMP{\nabla_{x'}\varphi_3}\,\mathrm{d}\Surf (x)
\end{equation*}
 in \eqref{ful-dec-2}\,. 

In general, though, the map $\rese z{}$ will retain a nontrivial   dependence \EEE  on $x_3$, so that planar and out-of-plane components of the displacements will be subject to the coupled conditions above. This is a classical phenomenon arising for inelastic dimension reduction problems. We refer to \cite[Section 5]{DavoliMora15} for an analogous observation in the setting of perfect plasticity.
\end{remark}

\subsection{\RCO Our second convergence result:  retaining the damping}
\label{ss:mainthm-2}
We again consider a sequence  $(\eps_k)_k \subset (0,\infty)$  converging to zero as $k \to\infty$.
We will now be tackling the vanishing-thickness analysis in the following setup.
\begin{condition}
\label{cond:2}
The coefficients $(\varrho_{\eps_k})_k$ and the tensors $(\mathbb{D}_{\eps_k})_k$ satisfy
\begin{equation}
 \label{hypD-2}
\begin{aligned}
&
\exists\, \varrho>0 \, : \ \ 
\varrho_{\eps_k}\to \varrho, 
\\
&
\exists\,   \mathsf{D}  \in  \R^{3\times 3\times3\times 3}, \ \text{ symmetric  in the sense of \eqref{assCD}  and positive definite, such that } \ 
 \eps_k \bbD_{\eps_k} \to   \mathsf{D}  \,.
\end{aligned}
\end{equation}
\end{condition}
Clearly,   \eqref{hypD-2}
 ensures  that the momentum balance equation,   in the vanishing-thickness limit, retains its \emph{damped} character. Moreover, since
$ \eps_k \bbD_{\eps_k} \approx  \mathsf{D}$ with  $\mathsf{D}$ a \emph{positive definite} tensor, 
estimating the sequence
$ (\eps_k \bbD_{\eps_k} e^{\eps_k}(\resed u{\eps_k}) {:}  e^{\eps_k}(\resed u{\eps_k}))_k\EEE$ 
in some suitable space
we will gain a bound for $ (e^{\eps_k}(\resed u{\eps_k}))_k$,
 and thus  additional compactness information on the sequence $(\sfu_{\eps_k})_k$. Such compactness properties \EEE
  will be strong enough to compensate a lack of
estimates for $(\sfz_{\eps_k})_k$ in $\SBV(\GC;\{0,1\})$. Namely, we will allow the parameters $(\mathrm{b}_{\eps_k})_k$ to converge to $\mathrm{b}=0$: 
in particular, in this context we may have  $\mathrm{b}_{\eps_k} = 0$ from a certain  $\bar k$ on.  Furthermore, the  compactness properties of 
 $(\sfu_{\eps_k})_k$  will also allow us to handle the contribution to the  surface  energy functional penalizing the failure of the   non-interpenetration 
 constraint $\JUMP{\sfu}  \cdot n \geq 0 $. \EEE
  That is why, we will allow the sequence $(\nu_k)_k$ modulating that contribution (cf.\ \eqref{defEk-eps}) 
   to converge  to a constant $\nu$ which may be positive. \EEE
 All in all, in place of Hypothesis \ref{Hyp-B} we will assume the following.
\begin{hypx}[Material parameters]
\label{Hyp-D}
We suppose that
\begin{align}
\label{hypD-1}
\exists\, \lim_{k \to \infty}\mathrm{b}_{\eps_k}=\mathrm{b} \geq 0, \qquad \exists\, \lim_{k\to\infty}\nu_{\eps_k} = \nu\geq 0\EEE ;
 \end{align}
 for the sequences $(a_0^{\eps_k})_k$, $(a_1^{\eps_k})_k$,  we require \eqref{parameters-below}. 
\end{hypx}
\smallskip

\par
For technical reasons,  we will additionally need the following condition. 
\begin{hypx}[Elasticity and viscosity tensors]
\label{Hyp-E-new}
The  tensors $\bbC$ and $\mathsf{D}$ comply with  \eqref{condX-1}. Further, the matrix $\mathscr{D}_3$:=$(\mathscr{D}_3)_{ij}=\mathsf{D}_{i3j3}$ is invertible.

\end{hypx}
\noindent 
  Let us recall that 
condition \eqref{condX-1} 
(which indeed encompasses part of the material symmetry conditions required in \cite{FreParRouZan11}, cf.\ Remark \ref{comparisonFPRZ}) ensures that, whenever
$\bbC$ and $\mathsf{D}$ multiply `planar' tensors, the resulting tensor is again `planar'. The crucial role of this property \EEE will be manifest in Lemma \ref{l:useful-X} ahead.  The additional invertibility requirement
in Hypothesis \ref{Hyp-E-new}
 is closely related to the properties of a suitable time-dependent minimum problem, which we will introduce in Section \ref{s:proof-th2}. \EEE
\par
 Furthermore,  while our condition on the forces  $( \rese f{\eps_k})_k$ will stay the same as in Hyp.\ \ref{hyp:data},
 in the context of Condition \ref{cond:2} we  will 
  have to strengthen our conditions on the loadings $(\sfw^{\eps_k})_k$; \eqref{new-conds-4-w}
  below
   indeed reflect that, in  the limit problem inertia will be retained, as well. \EEE
 \begin{hypx}[External forces]
\label{Hyp-E}
The sequence $( \rese f{\eps_k})_k$ complies with 
\eqref{bounds-w}.
The loadings $\sfw^{\eps_k}$ are Kirchhoff-Love themselves, i.e.\ $(\sfw_{\eps_k})_k \subset  W_\loc^{2,2}(0,\infty; \KL)$,  
$(\sfw_{\eps_k})_k$ is bounded in   $W_\loc^{2,2}(0,\infty;  H^1(\Omega;\R^3))$, 
and enjoys the additional bounds
\begin{subequations}
\label{new-conds-4-w}
\begin{equation}
\label{new-bound-w-hypF}
\exists\, C_w'>0 \  \ \forall\, k \in \N \, : \quad
 \eps_k^2 \| \resei w{\eps_k}1\|_{W_\loc^{3,1}(0,\infty;L^2(\Omega))} +
 \eps_k^2\| \resei w{\eps_k}2\|_{W_\loc^{3,1}(0,\infty;L^2(\Omega))} 
 \leq C_w'. 
 \end{equation}
 Moreover, in addition to convergences \eqref{convergences-w}, there holds
 \begin{equation}
  \label{convergences-w-3}
 \resei  w{\eps_k}3\to \sfw_3  \quad    \text{ in } W_\loc^{3,1}(0,\infty;L^2(\omega) .
 \end{equation}
 \end{subequations}
\end{hypx}
\noindent Thanks to this assumption, we clearly have $e^{\eps_k}(\sfw^{\eps_k})  = e(\sfw^{\eps_k}) $, thus, in view of the  first of \eqref{convergences-w} we have 
\begin{equation}
\label{KL-loadings-needed}
e^{\eps_k}(\sfw^{\eps_k}) \weakto e(\sfw) \quad \text{in } W_\loc^{2,2}(0,\infty;L^2(\Omega;\R^{3\times 3}))\,.
\end{equation}
\par
 Finally, let us complement the conditions on the initial data from Hyp. \ref{Hyp-C}, with a requirement on the initial velocities.
 \begin{hypx}[Initial data]
\label{Hyp-G}
The sequences $( \rese u{\eps_k}_0)_k$ and $ (\rese z{\eps_k}_0)_k$ comply with Hypothesis \ref{Hyp-C}.
Moreover,  there exists $\resed u{}_0 \in L^2(\Omega;\R^3)$ such that 
\begin{equation}
\label{e:initial-velocities}
 \resed u{\eps_k}_0 \to \resed u{}_0  \text{  in } L^2(\Omega;\R^3)\,.
\end{equation}
\end{hypx}

\par
 Under the scaling  prescribed by   Condition \ref{cond:2}  we will prove convergence of a (sub)sequence of \emph{Balanced} $\SE$ solutions 
of  the damped inertial systems $\ingrsysrepsk$ to a solution of   the   damped inertial system 
$\ingrsyslim$, with  the functionals  $\ingrsyslim$      (the subscript  {\footnotesize \textsc{VE}}  stands for 'viscoelastic') 
 specified in the following lines. Indeed,  kinetic energy will  be   given by
\begin{equation}
\label{kinetic-energy-lim}
\rese K{}(\dot{\sfu}): = \int_\Omega  \tfrac{\varrho}{2} |\dot{\sfu}_3|^2 \dd x
 = \int_\omega  \tfrac{\varrho}{2} |\dot{\sfu}_3|^2 \dd x'.
 \end{equation}
Note that,   in fact,  the integral is over $\omega$ because  the corresponding displacement $\sfu$ is Kirchhoff-Love and thus its third component only depends on the variable $x'
\in \omega$. 
The $1$-homogeneous dissipation $\rese R{}$ is given by  \eqref{R-resc-limit}. 
A key feature  of $\ingrsyslim$ will be  that  \EEE
 the viscous and the elastic bilinear forms in the
weak momentum balance only involve the planar minors $\epl(\sfu)$ and $\epl(\varphi)$  of the displacement and of the test function, like for the undamped momentum balance equation  \eqref{mom-bal-DR}. 
   Accordingly, the  dissipation due to damping will be encoded by the functional 
 \begin{subequations}
 \label{VE-diss}
\begin{equation}
\label{VE-diss-1}
 \Veve \colon H_{\GD}^1(\Omega{\setminus}\GC;\R^3) \to [0,\infty)\,,\; 
\Veve (\dot{\sfu}) : =  \int_{\Omega{\setminus}\GC} 
 \tfrac12  \mathsf{D} e(\dot \sfu){:}  e(\dot \sfu)  \dd x =   \int_{\Omega{\setminus}\GC} 
 \tfrac12  \Dr \epl(\dot \sfu){:}  \epl(\dot \sfu)  \dd x 
\end{equation}
where, for notational consistency with  Sec.\ \ref{ss:Thm1}, we have used the \emph{reduced viscosity tensor} 
 $\Dr:   \R_\sym^{2 \times 2} 
\to \R_\sym^{3\times 3} $ given by
 \begin{equation}
 \label{reduced-viscosity-tensor}
 \Dr\Xi: = \mathsf{D} 
 \,\begin{pNiceArray}{cw{r}{0.1cm}c}[margin]
\Block{2-2}{\Xi} & &  0 \\
& &  0 \\
0 & 0& 0 
\end{pNiceArray} \EEE
\qquad\text{for all } \Xi \in \R_\sym^{2 \times 2} \,.\EEE
 \end{equation}
\end{subequations}
Likewise, throughout this section, with slight abuse we will 
stick with the notation $\Cr$ for the 
\emph{reduced elasticity tensor} 
 $\Cr:   \R_\sym^{2 \times 2} 
\to \R_\sym^{3\times 3} $ given by
 \begin{equation}
 \label{reduced-elasticity-tensor-stick}
 \Cr\Xi: = \bbC
  \,\begin{pNiceArray}{cw{r}{0.1cm}c}[margin]
\Block{2-2}{\Xi} & &  0 \\
& &  0 \\
0 & 0& 0 
\end{pNiceArray} \EEE
\qquad\text{for all } \Xi \in \R_\sym^{2 \times 2} \,.\EEE
 \end{equation}
 Hence, 
the  energy functional  $\Eve : [0,\infty) \times H_{\GD}^1(\Omega{\setminus}\GC;\R^3) \times L^1(\GC)  \to\R\cup\{\infty\}$
  driving the evolution of the limit system
   will be given by 
 $ \Eve =  \Eve^{\mathrm{bulk}} +  \Eve^{\mathrm{surf}}$,  where the bulk energy is given by \EEE  
 \smallskip
 \begin{subequations}
 \label{defEve-resc-limit}
\begin{align}
 \label{defEve-resc-bulk-limit}
 &
 \Eve^\bulk  (t,\sfu) :=
 \int_{\Omega{\setminus}\GC}\tfrac{1}{2}\Cr \epl(\sfu) \epl(\sfu) \dd x  - \pairing{}{\Spvr}{\rese F{} (t)}{\sfu} 
 \intertext{with 
 $\rese F{} \colon [0,T] \to \Spu^*$ defined  by }
 &
 \nonumber
 \begin{aligned}
&
 \pairing{}{\Spvr}{\rese F {}(t)}{\sfu} :=   \int_{\Omega} \rese f{}(t)\cdot\sfu \dd x -
 \int_{\Omega{\setminus}\GC} \Cr \epl(\sfw){:} \epl(\sfu) \dd x 
 -\int_{\Omega{\setminus}\GC} \Dr \epl (\dot \sfw){:}  \epl(\sfu)  \dd x 
 \\ & \qquad \qquad \qquad \qquad \qquad \qquad  
- \int_\omega \varrho \reseidd w{} 3(t)  \sfu_3 \dd x'
\end{aligned}
\intertext{and the surface energy is given by}
& 
 \label{defEve-resc-surf-limit}
 \Eve^{\surf}(\sfu,\sfz) = 
\rese{H}{}(\sfu) +\rese {J}{} (\sfu,\sfz)  + \int_{\GC} \left(I_{[0,1]}(\sfz){-}a_0 \sfz\right) \dd \Surf (x) +\mathrm{b}\calG(\sfz)  \qquad \text{with} 
\\
& \nonumber
\begin{cases}  \rese{H}{}(\sfu) = \nu  \int_{\GC}\widehat{\alpha}_\lambda(\JUMP{\sfu_1,\sfu_2,0}) \dd \Surf(x)\,,
\smallskip
\\
 \rese {J}{} (\sfu,\sfz) = \int_{\GC}  \tfrac \kappa 2   \sfz Q({\JUMP{\sfu}})\dd \Surf(x)\,,
 \end{cases}
 \end{align}
  where now $\mathrm{b}\geq 0$ and  $\nu\geq 0$  in accordance with Hypothesis \ref{Hyp-D}.
 \end{subequations}

We are now in a position to state the second main result of the paper.
\begin{maintheorem}
\label{mainthm-2}
Under Condition \ref{cond:2}, assume Hypotheses \ref{Hyp-D}, \ref{Hyp-E-new}, \ref{Hyp-E}, and \ref{Hyp-G}, 
 with 
 \begin{equation}
\label{convergences-initial-data-34}
 \rese  E{\eps_k}(0,\rese u{\eps_k}_0, \rese z{\eps_k}_0)\to   \Eve (0,\sfu_0,  \sfz_0)\,.
 \end{equation}
 Let 
$(\sfu_{\eps_k}, \sfz_{\eps_k})_k$ a sequence of Balanced $\SE$ solutions 
to the damped inertial systems $\ingrsysrepsk_k$.
\par
Then, there exist a (not relabeled) subsequence, and a pair  $(\sfu, \sfz)$ with \EEE
\begin{equation}
\label{reg-uz-DR-2}
\begin{aligned}
&
\sfu \in H_{\loc}^1(0,\infty; \Spu) \text{ with } \sfu(t), \,  \dot \sfu(t)  \in  \KLGD \EEE    \ \foraa t \in (0,\infty) 
\text{ and }
\\
& 
 \sfu_3 \in H_\loc^1(0,\infty;H_{\dir}^2(\omega{\setminus}\gC)) \cap   W_\loc^{1,\infty} (0,\infty;L^2(\omega)), 
\\
&
\sfz \in L_{\loc}^\infty(0,\infty; L^\infty(\GC)) \cap  \BV_{\loc}(0, \infty ;L^1(\GC)), \EEE 
\end{aligned}
\end{equation}
such that the following convergences hold as $k\to\infty$:
\begin{equation}
\label{ptwse-cvgs}
\begin{aligned}
& \rese u{\eps_k}(t)  \BREV \weakto \EEE \sfu(t) &&  \text{in }  \Spu  && \text{for all }  t \in [0,\infty), \EEE 
\\
&
\rese z{\eps_k}(t) \weaksto \sfz(t)  &&  \text{in }   L^\infty(\GC) && \text{for all }   t \in [0,\infty), \EEE
\end{aligned}
\end{equation}
and 
$(\sfu,\sfz)$ comply with
\begin{subequations}
\label{prob-form-Thm34}
\begin{enumerate}
\item
the momentum balance equation 
\begin{equation}
\label{mom-bal-DR-34}
\begin{aligned}
&
\ -\int_0^t \int_\omega \varrho \reseid u{}3 \dot{\varphi}_3 \dd x' \dd r +\int_\omega \varrho \reseid u{}3(t) \varphi_3(t)  \dd x'
-\int_\omega \varrho  \resed u{}_{0,3} \varphi_3(0)  \dd x'
\\
&
 +\int_0^t  \int_{\Omega{\setminus}\GC} \Dr \epl(\resed u{})  {:} \epl(\varphi)     \dd x   \dd r
+\int_0^t  \int_{\Omega{\setminus}\GC} \bbC_{\mathrm{r}} \epl(\rese u{})  {:} \epl(\varphi)     \dd x  \dd r 
\\
&
 +   \nu  \int_0^t  \int_{\GC} \alpha_\lambda(\JUMP{\rese u{}_1, \rese u{}_2, 0}){\cdot} \JUMP{\varphi_1,\varphi_2,0} \dd \Surf(x) \dd r 
+   \int_0^t \int_{\GC} \kappa\rese z{}\JUMP{\rese u{}}\cdot\JUMP{\varphi}\,\mathrm{d}\Surf (x) \dd r 
\\
&
 =   \int_0^t \int_\Omega \rese f{}{\cdot}\varphi \dd x \dd r  - \int_0^t  \langle  \varrho  \reseidd w{}3,  \varphi_3  \rangle_{H^1(\omega{\setminus}\gC)} \dd r 
 - \int_0^t \int_{\Omega{\setminus}\GC} \Dr\epl(\resed w{})  {:} \epl(\varphi)     \dd x \dd r 
 \\ &  
 - \int_0^t  \int_{\Omega{\setminus}\GC} \bbC_{\mathrm{r}} \epl(\rese w{})  {:} \epl(\varphi)     \dd x  \dd r     \quad \text{for all }
\varphi \in\mathfrak{V} \text{ and almost all  } t\in (0,\infty)
 \\
 &  \qquad \qquad \text{with }
\mathfrak{V} : = \{ \varphi \in   L_\loc^2(0,\infty; \KLGD) \cap  W_\loc^{1,1}(0,\infty; L^2(\Omega;\R^3)) \}\,;
\end{aligned}
\end{equation}
\item the semistability condition
 for all $t\in [0,\infty)$
\begin{equation}
\label{semistab-DR-34}
\Eve(t,\sfu(t),\sfz(t)) \leq \Eve(t,\sfu(t),\tilde z) +
\rese R{}(\tilde z {-}\sfz(t)) \qquad \text{for all } \tilde z \in \Spz,
\end{equation}
featuring the $1$-homogeneous dissipation potential $\rese R{}$ from \eqref{R-resc-limit};
\item 
 the energy-dissipation 
 inequality  
\begin{equation}
\label{EDI-lim-43}
\begin{aligned}
 & 
 \rese K{}(\dot{\sfu}(t)) 
+ \int_0^t \int_{\Omega{\setminus}\GC} \Dr \epl(\resed u{})  {:} \epl(\resed u{})
\dd x  \dd r
 + \Var_{\mathsf{R}}(\sfz, [0,t])
 \\
 & \quad 
+ \Eve(t,\sfu(t),\sfz(t))     \leq  
 \rese K{}(\dot{\sfu}(0))
+    \Eve(0,\sfu(0),\sfz(0)) 
+ \int_0^t \partial_t
  \Eve(r,\sfu(r),\sfz(r)) \,  \mathrm{d}r
 \end{aligned}
\end{equation}
 for almost all    $t  \in (0,\infty)$.  
%
\end{enumerate}
\end{subequations}
\end{maintheorem}
\begin{remark}[Analysis of problem formulation \eqref{prob-form-Thm34}]
\label{rmk:despite}
\upshape In the present  setup,   the limit pair $(\sfu,\sfz)$  is a  
 $\SE$ solution  of the \emph{damped} adhesive contact system
$(\rese K{}, \Veve,\rese R{}, \Eve)$.
 The weak  formulation of the momentum balance reflects the lack of estimates for the second derivative $\reseidd u{}3$, hence the term 
$ \int_0^t  \langle  \varrho  \reseidd u{}3 ,  \varphi_3  \rangle_{H^1(\omega{\setminus}\gC)} \dd r $ needs to be integrated by part. In turn, 
 we emphasize  that,  unlike in Theorem \ref{mainthm-1} \EEE, 
 here we have succeeded in proving the semi-stability condition \emph{for all}  $t\in [0,\infty)$. 
 \par
  Nonetheless, even in this setup we have not succeeded in obtaining the energy-dissipation \emph{balance}. The latter would have stemmed 
 from testing the momentum balance equation by $\dot \sfu$ (cf.\  Remark \ref{rmk:failure-ENID}), which is however not admissible because  we do not have 
 $\dot \sfu \in \mathfrak{V}$ by lack of time regularity of $\dot{\sfu}$.  For the same reason, testing 
  the momentum balance by difference quotients does not seem to lead to the desired result, either. 
 \EEE
\end{remark}
 \begin{remark}
\label{rk:mom-balance-pb2}
\upshape As in Remark \ref{rk:mom-balance-pb1}, 
we emphasize 
that the  limit momentum balance \eqref{mom-bal-DR-34} is again characterized by a non-trivial coupling between the in-plane and out-of-plane components of the displacement $\rese u{}$. 
 Moreover, in this case only  \EEE
a \emph{partial} decoupling of in-plane and out-of-plane contributions would  be possible   under the additional structure condition \eqref{eq:adh-add}. 
\end{remark}

\section{Preliminary estimates}
\label{s:prelim-ests}
The main result of this section, Proposition \ref{prop:a-PRIO},   collects the a priori estimates available for the sequences $(\rese u{\eps_k})_k$ and $(\rese z{\eps_k})_k$. 
It is 
 formulated in such a way as to lay the ground for the compactness arguments both for Thm.\ \ref{mainthm-1} and for Thm.\ \ref{mainthm-2}.
\par
 First of all, we show that the energy functionals $(\rese E{\eps_k})_k $ satisfy, on all (bounded) sub-intervals of
 $[0,\infty)$,
  the 
 analogues of the \RCO coercivity and power-control estimates \EEE  \eqref{est-lemma}, uniformly w.r.t.\ $k \in \N$.
%
\begin{lemma}
\label{l:coercivity}
Let 
$(\mathrm{b}_{\eps_k})_k$ and $(\nu_{\eps_k})_k$ be sequences in $[0,+\infty)$ and  $(\rese f{\eps_k})_k$ comply with  \eqref{convergences-f}.
 Suppose that the  sequences $(\varrho_{\eps_k})_k \subset [0,+\infty)$,  $(\bbD_{\eps_k})_k\subset \R^{3{\times}3{\times}3{\times}3}$  comply with 
\begin{enumerate}
    \item   Condition \ref{cond:1}: in that case, let $(\rese w{\eps_k})_k$  satisfy Hypothesis \ref{hyp:data}; 
\item
Condition \ref{cond:2}: in that case, let $(\rese w{\eps_k})_k$  satisfy Hypothesis  \ref{Hyp-E}. 
\end{enumerate} 
 Then,
\begin{subequations}
\label{est-lemma-unif}
\begin{align}
&
\label{est-coer-E-eps}
\begin{aligned}
&
\forall\, \Temp>0  \ \exists\, c_{\Temp},\,  c_{\Temp}',\,  C_{\Temp}>0 \ \forall\, k \in \N \ 
 \forall\, (t,\sfu,\sfz) \in [0,\Temp]\times H_{\Dir}^1(\Omega{\setminus}\GC;\R^3)  {\times}  L^\infty (\GC;\{0,1\})  :  \EEE
\\
& 
\ \rese E{\eps_k}\EEE(t,\sfu,\sfz)   \geq c_{\Temp} \left(\|e^{\eps_k}(\sfu)\|_{L^2(\Omega;\R^{3\times 3})}^2 {+}  \mathrm{b} \EEE \|\sfz\|_{\SBV(\GC)} \right) -C_{\Temp}
 \geq c_{\Temp}' \left(\RNEW \|\sfu\|_{H_{\Dir}^1(\Omega{\setminus}\GC)}^2 \EEE {+}  \mathrm{b} \EEE \|\sfz\|_{\SBV(\GC)} \right) -C_{\Temp},
\end{aligned}\\
&
\label{Gronwall-estimate-eps}
\begin{aligned}
&
\forall\, \Temp>0  \ \exists\, \ell_{\Temp}\EEE
\in L^1(0,\Temp)  \ \forall\, k \in \N   \  \foraa t \in  (0,\Temp) \  \forall\, (u,z) \in   H_{\Dir}^1(\Omega{\setminus}\GC;\R^3)  {\times} L^\infty (\GC;\{0,1\}) :  \EEE
\\
 & \  |\partial_t \rese E{\eps_k}(t,\sfu,\sfz) | \leq |\ell_{\Temp}(t)\EEE|  \left(  \rese E{\eps_k}(t,\sfu,\sfz) {+}1\right)\,.
 \end{aligned}
\end{align}
\end{subequations}
\end{lemma}
\begin{proof}
We adapt the calculations from the proof of Lemma \ref{l:properties-en}. Indeed, by Hypothesis \ref{hyp:data} we find that 
\[
\exists\, C_{\Temp}>0 \ \ \forall\, k \in \N \  \forall\, (t,\sfu) \in [0,\Temp]\times H_{\Dir}^1(\Omega{\setminus}\GC;\R^3)  : 
\ \  |\pairing{}{H^1(\Omega{\setminus}\GC)}{\rese F{\eps_k}(t)}{\sfu} | \leq C_T  (\| \sfu\|_{H^1(\Omega{\setminus}\GC)} {+}   \| e^{\eps_k}(\sfu) \|_{L^2(\Omega)})\,. \EEE
\]
On the other hand,  
by the positive-definiteness of $\bbC$, \RCO the definition of $e^\eps$, \EEE and  Korn's inequality
we infer \EEE 
\begin{equation}
\label{due-to-Korn}
\int_{\Omega{\setminus}\GC} \tfrac12 \bbC e^{\eps_k}(\sfu){:} e^{\eps_k}(\sfu) \dd x  \geq   c' \|  e^{\eps_k}(\sfu) \|_{L^2(\Omega)}^2 \geq c \RNEW \|\sfu \|_{H^1(\Omega{\setminus}\GC)}^2\,.\EEE
\end{equation}
Combining these two estimates we deduce \EEE \eqref{est-coer-E-eps}.
\par
 Inequality \EEE \eqref{Gronwall-estimate-eps} follows upon observing that for every $(\sfu,\sfz) \in  H_{\Dir}^1(\Omega{\setminus}\GC;\R^3)  \times\SBV(\GC;\{0,1\})$
  and for almost all $t\in (0,T)$ there holds \EEE
\[
\begin{aligned}
&
|\partial_t \rese E{\eps_k}(t,\sfu,\sfz)| 
\\
& \leq C\Big(\|\resed f{\eps_k} (t)\|_{L^2(\Omega)}{+} \|e^{\eps_k} (\resed w{\eps_k}(t))\|_{L^2(\Omega)}{+} 
  \eps_k \| \mathbb{D}_{\eps_k} e^{\eps_k} (\resedd w{\eps_k}(t))\|_{L^2(\Omega)} \EEE
\\
& \qquad \qquad \qquad 
{+}\eps_k^2\sum_{i=1}^2  \varrho_{\eps_k}  \|\reseiddd w{\eps_k} i(t)\|_{L^2(\Omega)} {+} \varrho_{\eps_k} \|\reseiddd w{\eps_k} 3(t)\|_{L^2(\Omega)}\Big)
 \times
 (\| \sfu\|_{H^1(\Omega{\setminus}\GC)} {+}   \| e^{\eps_k}(\sfu) \|_{L^2(\Omega)})\,. \EEE
\end{aligned}
\] 
Now,  the first term on the right-hand side can be controlled via \eqref{convergences-f}, while the second term is estimated by the first of 
 \eqref{bounds-w}. 
 Under Condition \ref{cond:1}, we estimate 
 \[
 \| \eps_k \mathbb{D}_{\eps_k} e^{\eps_k} (\resedd w{\eps_k}(t))\|_{L^2(\Omega)} \leq \eps^{\beta} |\mathbb{D}_{\eps_k}| \eps^{1-\beta}\| e^{\eps_k} (\resedd w{\eps_k}(t))\|_{L^2(\Omega)}  \longrightarrow 0 
 \]
 thanks to   \eqref{bounds-w}. Alternatively, if we only have that $(\eps_k \mathbb{D}_{\eps_k})_k$ is bounded as under Condition \ref{cond:2}, we need to resort to
 Hypothesis 
 \ref{Hyp-E} for $(\rese w{\eps_k})_k$. 
 Finally, the  terms premultiplied by
 $\varrho_{\eps_k}$
   are non-null only in the setup of Condition \ref{cond:2}, when also 
  Hypothesis  \ref{Hyp-E} is in force.  \EEE
\end{proof}

 In view of the previous lemma, we establish some a priori bounds for the sequence $(\rese u{\eps_k},\rese z{\eps_k})_k$, 
 that will be valid both for Thm.\ \ref{mainthm-1} and for Thm.\ \ref{mainthm-2}. 
 \EEE

\begin{proposition}[A priori estimates]
\label{prop:a-PRIO}
 Under the assumptions of Lemma \ref{l:coercivity}, 
  suppose in addition that the initial data  either  comply with  Hyp.\ \ref{Hyp-C} 
 under Condition \ref{cond:1}, 
  or 
  with Hyp.\ \ref{Hyp-G} under Condition
 \ref{cond:2}. 
\par
Then, \EEE
there exists a constant $C>0$ such that the following bounds 
hold uniformly w.r.t.\ $k\in \N$:
\begin{subequations}
\label{est-rescaled-sols}
\begin{align}
&
\label{est-rescaled-sols-1}
  \eps_k\|  \varrho_{\eps_k}^{1/2}     \reseid u{\eps_k}1\|_{L_\loc^\infty(0,\infty;L^2(\Omega))} + 
   \eps_k \|   \varrho_{\eps_k}^{1/2}      \reseid u{\eps_k}2\|_{L_\loc^\infty(0,\infty;L^2(\Omega))}+
 \|   \varrho_{\eps_k}^{1/2}     \reseid u{\eps_k}3\|_{L_\loc^\infty(0,\infty;L^2(\Omega))} \leq C,
\\
& 
\label{est-rescaled-sols-2}
\| \rese u{\eps_k}\|_{L_\loc^\infty(0,\infty;\RNEW H_{\Dir}^1(\Omega{\setminus}\GC) \EEE)} \leq C,
\\
& 
\label{est-rescaled-sols-3}
 \frac1{\eps_k^2}\left\|\partial_3 \rese u{\eps_k}_3\right\|_{L_\loc^\infty(0,\infty;L^2(\Omega))} \leq C,
\\
& 
\label{est-rescaled-sols-4}
 \frac1{{\eps_k}} \left\| \partial_1 \rese u{\eps_k}_3{+}\partial_3 \rese u{\eps_k}_1 \right\|_{L_\loc^\infty(0,\infty;L^2(\Omega))}
+ \frac1{{\eps_k}}\left\| \partial_2 \rese u{\eps_k}_3{+} \partial_3 \rese u{\eps_k}_2\right\|_{L_\loc^\infty(0,\infty;L^2(\Omega))} \leq C,
\\
& 
\label{est-rescaled-sols-4-bis}
\|e^{\eps_k}(\rese u{\eps_k})\|_{L_\loc^\infty(0,\infty;L^2(\Omega))}+
 {\eps_k} \|\bbD_{\eps_k} e^{\eps_k}(\resed u{\eps_k}) {:}  e^{\eps_k}(\resed u{\eps_k})\|_{L_\loc^1(0,\infty;L^1(\Omega))} \leq C, 
\\
& 
\label{est-rescaled-sols-5}
 \mathrm{b}\, \EEE \| \rese z{\eps_k}\|_{L_\loc^\infty(0,\infty;\SBV(\GC;\{0,1\})} \leq C\,.
\end{align}
Furthermore, 
\begin{equation}
\label{est-varR}
\forall\, \Temp>0 \ \exists\, C_{\Temp}'>0 \ \forall\, {\eps_k}>0 \, : \quad \Var_{\rese R{\eps_k}}(\rese z{\eps_k}; [0,\Temp]) \leq C_{\Temp}'.
\end{equation}
\end{subequations}
\end{proposition}
\begin{proof}
We mimick the arguments from the proof of Proposition \ref{prop:2.5}. Indeed, we start from the energy-dissipation balance \eqref{EDB-eps}: 
combining
\eqref{Gronwall-estimate-eps}  with the 
 Gronwall Lemma we obtain that 
\begin{equation}
\label{energy-bound-eps-rescal}
\forall\, \Temp>0 \ \exists\, C_{\Temp}'>0 \ \forall\, k\in \N \, :  \ \  \sup_{t\in [0,\Temp]}
 |\rese E{{\eps_k}}(t,\rese u{\eps_k}(t),\rese z{\eps_k}(t))|\leq C.
\end{equation}
On account of \eqref{est-coer-E-eps},
we then infer 
\[
\| e^{\eps_k}(\rese u{\eps_k})\|_{L_\loc^\infty(0,\Temp;L^2(\Omega))} \leq C,
\]
whence estimates \eqref{est-rescaled-sols-2} (due to \eqref{due-to-Korn}),
\eqref{est-rescaled-sols-3}, and 
\eqref{est-rescaled-sols-4}, as well as \eqref{est-rescaled-sols-5}. 
\par
Furthermore, again arguing as for Prop.\  \ref{prop:2.5}, from the bound for the kinetic energy and dissipation terms we deduce estimates
\eqref{est-rescaled-sols-1}, 
\eqref{est-rescaled-sols-4-bis}
 \eqref{est-varR}. 
 \end{proof}
\section{Proof of Theorem \ref{mainthm-1}}
\label{s:proof-th1}
 The proof is split into the following steps.
\medskip

\noindent
\textbf{Step $0$: compactness.}
It follows from estimates \eqref{est-rescaled-sols} 
and standard weak compactness results that there exists  \RNEW $\sfu \in L_\loc^\infty(0,\infty;H_{\Dir}^1 (\Omega{\setminus}\GC;\R^3))$  such that
\begin{subequations}
\begin{equation}
\label{convs-k-u1}
\rese u{\eps_k} \weaksto \sfu \text{ in } L_\loc^\infty(0,\infty;H_{\Dir}^1 (\Omega{\setminus}\GC;\R^3)).
\end{equation}
Now, by \eqref{est-rescaled-sols-3} and \eqref{est-rescaled-sols-4}, we have that 
\begin{equation}
\label{towards-KL-structure}
\partial_3 \rese u{\eps_k}_3 \to 0, \quad  \left( \partial_1 \rese u{\eps_k}_3{+}\partial_3 \rese u{\eps_k}_1 \right)  \to 0, \quad 
 \left( \RCO  \partial_3 \rese u{\eps_k}_2{+}  \EEE \partial_2 \rese u{\eps_k}_3 \right) \to 0 \qquad \text{ in }  L_\loc^\infty(0,\infty;L^2(\Omega)).
\end{equation}
Hence, we deduce that 
\[
\sfu(t) \in  \KLGD \EEE \qquad \foraa t  \in (0,+\infty)\,.
\]
Therefore, $\sfu$ admits  \EEE
the representation  \eqref{KL-decomposition} \EEE
with  two functions $\overline \sfu \in L_\loc^\infty(0,\infty; H_{\dir}^1(\omega{\setminus}\gC;\R^2))$ and $u_3 \in L_\loc^\infty(0,\infty; H_{\dir}^2(\omega{\setminus}\gC)) $. 
 By \eqref{est-rescaled-sols-4-bis}, \EEE there exists
$\sfe \in L_\loc^\infty (0,\infty;L^2(\Omega;\R^{3\times 3}))$ such that 
\begin{equation}
\label{conv-stress-u}
e^{\eps_k}(\rese u{\eps_k}) \weaksto \sfe \quad \text{ in } L_\loc^\infty (0,\infty;L^2(\Omega;\R^{3\times 3})) 
\end{equation}
and
 a triple $(\sfd_{13},\sfd_{23}, \sfd_{33})$ such that 
\begin{equation}
\label{conv-dii}
\begin{aligned}
&
\frac1{\eps_k} (e(\rese u{\eps_k}))_{i3} \weaksto \sfd_{i3}  &&  \text{ in } L_\loc^\infty (0,\infty;L^2(\Omega)) \qquad \text{for  } i=1,2,
\\
&
\frac1{\eps_k^2} (e(\rese u{\eps_k}))_{33} \weaksto \sfd_{33} &&  \text{ in } L_\loc^\infty (0,\infty;L^2(\Omega)),
\end{aligned}
\end{equation}
so that (recall the notation in Subsection \ref{subs:rescaled}) \EEE
\begin{equation}
\label{identification-eplan-u}
\sfe = 
 \,\begin{pNiceArray}{cw{r}{0.3cm}c}[margin]
\Block{2-2}{\epl(\sfu) } & &  \sfd_{13} \\
& &  \sfd_{23} \\
\sfd_{13} & \sfd_{23}& \sfd_{33}
\end{pNiceArray}\,. \EEE
%
\end{equation}
\par
As for $(\rese z{\eps_k})_k$, there exist  $\sfz\in L_\loc^\infty(0,\infty;\SBV(\GC;\{0,1\})) \cap \BV([0,T];L^1(\GC))$ such that, along a (not relabeled) subsequence, 
\begin{align}
&
\label{conv-k-z-1}
\rese z{\eps_k} \weaksto  \sfz && \text{in } L_\loc^\infty(0,\infty;\SBV(\GC;\{0,1\})),
\\
&
\label{convs-k-z-5}
 \rese z{\eps_k}(t)\weaksto \sfz(t) && \text{in }  \SBV(\GC;\{0,1\})  \EEE 
  \text{ for all } t \in [0,\infty),
\\
& 
\label{convs-k-z-6}
 \rese z{\eps_k}(t) \to \sfz(t) && \text{in } L^q(\GC) \text{ for every } 1\leq q <\infty \text{ and  for all } t \in [0,\infty).
\intertext{ \RCO  Thus, by the dominated convergence theorem we have}
& 
\label{convs-k-z-7}
 \RCO \ \rese z{\eps_k} \to \sfz &&   \RCO  \text{in } L^p_\loc(0,\infty;L^q(\GC) ) \text{ for every } p,\, q \in [1,\infty).
\end{align}
\end{subequations}
\EEE

\medskip

\noindent
\textbf{Step $1$: enhanced properties of $\sfu$.} 
 In this step, we will show that the tensor-valued function $\sfe$ from \eqref{conv-stress-u} satisfies
\begin{equation}
\label{toSHOW-MM}
 \sfe = \bbM  \epl(\sfu)  
\qquad \aein \  \Omega\times(0,\infty)\,, \EEE
\end{equation}
 where $\bbM$ is the operator introduced  in \eqref{def-oper-M}. \EEE
For \eqref{toSHOW-MM}, we borrow the argument from the proof of  \cite[Thm.\ 4.1]{Maggiani-Mora}.
 Fix  $(a,b)\subset \RCO  (-\tfrac 12,\tfrac 12)$ and an open set $U\subset\omega$. Let $(\ell_n)_n\subset  \RCO \mathrm{C}^1([-\tfrac 12,\tfrac 12])$, \EEE  and for  every
  $\zeta^i\in \R$,  $i=1,2,3$, \EEE
let $(\zeta_n^i)_n\subset \mathrm{C}^1_{\rm c}(\omega)$ fulfill
\begin{align*}s
     \ell'_n \EEE \to \chi_{(a,b)}\quad\text{strongly in }L^4(-\tfrac12,\tfrac12),\qquad 
    \zeta_n^i\to\zeta^i\chi_{U}\EEE\quad\text{strongly in }L^4(\omega),
\end{align*}
as $n\to +\infty$. 
Consider the maps
\begin{equation}
\label{test-psi-added}
\psi^{\eps_k}_n(x):=
\begin{pNiceArray}{c}
2\eps_k \zeta_n^1(x')\ell_n(x_3)\\
2\eps_k \zeta_n^2(x')\ell_n(x_3)\\
\eps_k^2 \zeta_n^3(x')\ell_n(x_3)\\
\end{pNiceArray} \quad\text{for a.a. }
 x=(x',x_3)\in \Omega\EEE
\end{equation}
as test functions in \eqref{weak-mom-eps}, and integrate the momentum equation  over a generic interval $[s,t] \subset [0,\infty)$. 
\EEE
Since for every $k, \, n \in \mathbb{N}$  we have  $\JUMP{\psi^{\eps_k}_n}\equiv 0$, 
 the fourth and fifth terms in \eqref{weak-mom-eps} are identically equal to zero.
  Therefore, we obtain
 \begin{equation}
 \label{lim-n-fixed}
 \begin{aligned}
&
\int_s^t  \int_{\Omega{\setminus}\GC}  \left( {\eps_k} \mathbb{D}_{\eps_k} e^{\eps_k}(\resed u{\eps_k}) {:} e^{\eps_k}( \psi^{\eps_k}_{n} ) 
{+} \bbC e^{\eps_k}(\rese u{\eps_k} )  {:} e^{\eps_k}( \psi^{\eps_k}_{n} )  \right) \dd x \dd r 
\\
&
= \int_s^t  \int_\Omega \rese f{\eps_k}\psi^{\eps_k}_{n} \dd x \dd r 
-  \int_s^t   \int_{\Omega\setminus \GC} \bbC e^{\eps_k}(\rese w{\eps_k}) : e^{\eps_k}(\psi^{\eps_k}_{n}) \dd x \dd r - {\eps_k}  \int_s^t   \int_{\Omega\setminus \GC} \bbD_{\eps_k}
 e^{\eps_k}(\resed w{\eps_k}) : e^{\eps_k}(\psi^{\eps_k}_{n}) \dd x \dd r \,.
\end{aligned}
\end{equation}
We now take the limit in \eqref{lim-n-fixed} 
 as $\eps_k \downarrow 0$, for fixed $n$. 
It is easy to check that 
$ \lim_{k\to\infty} \int_s^t  \int_\Omega \rese f{\eps_k}\psi^{\eps_k}_{n} \dd x \dd r =0 $, as well. 
In order to evaluate the terms involving $ e^{\eps_k}( \psi^{\eps_k}_{n} )  $, we explicitly compute it to find
\[
\begin{aligned}
&
e^{\eps_k}( \psi^{\eps_k}_{n})=  
 \,\begin{pNiceArray}{cw{r}{0.7cm}c}[margin]
\Block{2-2}{ \epl( \psi^{\eps_k}_{n}) } & & \mathfrak{e}_{13}^{n,k}   \\
& & \mathfrak{e}_{23}^{n,k}   \\
\mathfrak{e}_{13}^{n,k}  & \mathfrak{e}_{23}^{n,k} & \mathfrak{e}_{33}^{n,k} 
\end{pNiceArray} \EEE
\qquad \text{with }
\medskip
\\
& 
\begin{cases}
&
\displaystyle
\epl( \psi^{\eps_k}_{n})  = \eps_k \left( \begin{array}{lll} 
 & 2 \ell_n  \partial_{x_1} \zeta_n^1 &  \ell_n (\partial_{x_1} \zeta_n^2 {+}   \partial_{x_2}\zeta_n^1  )
 \smallskip
\\
 &  \ell_n (\partial_{x_1} \zeta_n^2 {+}   \partial_{x_2}\zeta_n^1  ) &  2 \ell_n  \partial_{x_2} \zeta_n^2 
 \end{array}
 \right) \,,
  \smallskip
 \\
 &
 \displaystyle
  \mathfrak{e}_{13}^{n,k} = \tfrac{\eps_k}2  \ell_n   \partial_{x_1} \zeta_n^3 + \zeta_n^1 \ell_n'\,,
   \smallskip
  \\
  & \displaystyle
  \mathfrak{e}_{23}^{n,k} = \tfrac{\eps_k}2  \ell_n   \partial_{x_2} \zeta_n^3 + \zeta_n^2 \ell_n' \,,
   \smallskip
  \\
  &  \displaystyle
 \mathfrak{e}_{33}^{n,k}  =  \zeta_n^3 \ell_n'  \,,
 \end{cases}
\end{aligned}
\]
whence we have, for fixed $n\in \N$, 
\begin{equation}
\label{e-psi-w}
e^{\eps_k}( \psi^{\eps_k}_{n}) \longrightarrow  
 \,\begin{pNiceArray}{cw{r}{0.5cm}c}[margin]
\Block{2-2}{ 0 } & &  \zeta^1_n  \ell_n'   \\
& &  \zeta^2_n  \ell_n'   \\
 \zeta^1_n  \ell_n' &  \zeta^2_n  \ell_n' &  \zeta^3_n  \ell_n' 
\end{pNiceArray} \EEE
\qquad \text{in } L^2(\Omega;\R^{3\times 3}) \quad \text{as $k\to \infty $.}
\end{equation}
Now,  for the third term on the right-hand side of \eqref{lim-n-fixed} we have 
\begin{equation}
\label{viscosity-tends-0}
\begin{aligned}
&
\lim_{k\to\infty}\int_s^t  \int_{\Omega{\setminus}\GC}   \eps_k \mathbb{D}_{\eps_k} e^{\eps_k}(\resed u{\eps_k}) {:} e^{\eps_k}( \psi^{\eps_k}_{n} )  \dd x 
\\
&
 =\lim_{k\to\infty} \int_s^t  \int_{\Omega{\setminus}\GC}     [\eps_k^{1/2} \mathbb{D}_{\eps_k}^{1/2} 
e^{\eps_k}(\resed u{\eps_k}) ] {:}   [\eps_k^{1/2} \mathbb{D}_{\eps_k}^{1/2}  e^{\eps_k}( \psi^{\eps_k}_{n} ) ] \dd x 
=0 \,,
\end{aligned}
\end{equation}
 (recall the definition of the square root of a tensor, cf.\ Remark \ref{rmk:sqrt}). 
 For \eqref{viscosity-tends-0}, 
  we have combined  that $\eps_k \mathbb{D}_{\eps_k} \to 0 $ by Condition \ref{cond:1} with \eqref{e-psi-w}  and the bound on 
$( \eps_k^{1/2} \mathbb{D}_{\eps_k}^{1/2}
e^{\eps_k}(\resed u{\eps_k}))_k$ in $ L^2_{\loc} (0,\infty;L^2(\Omega;\R^{3\times 3}))$ due to  \eqref{est-rescaled-sols-4-bis}.
From \eqref{e-psi-w} with \eqref{conv-stress-u}  it also follows \EEE 
\[
\lim_{k\to\infty}\int_s^t  \int_{\Omega{\setminus}\GC} \bbC e^{\eps_k}(\rese u{\eps_k} )  {:} e^{\eps_k}( \psi^{\eps_k}_{n} )   \dd x \dd r  = \int_s^t  \int_{\Omega{\setminus}\GC} \bbC  \sfe {:} 
 \,\begin{pNiceArray}{cw{r}{0.3cm}c}[margin]
\Block{2-2}{ 0 } & &  \zeta^1_n     \\
& &  \zeta^2_n     \\
 \zeta^1_n   &  \zeta^2_n   &  \zeta^3_n  
\end{pNiceArray} \, \EEE
\ell'_n \dd x \dd r=0.
\]
Analogously, we take the limit in the second and in the third limit on the right-hand side of \eqref{lim-n-fixed}, recalling
that  $e^{\eps_k}(\rese w{\eps_k}) \weakto \bbM \epl(\sfw) $ in $W_\loc^{1,2}(0,\infty;L^2(\Omega;\R^{3\times 3}))$ by 
\eqref{further-cvg-w} \EEE
and using that 
$ \eps_k \bbD_{\eps_k}
 e^{\eps_k}(\resed w{\eps_k}) \to 0$ in $L^2(0,T;L^2(\Omega;\R^{3\times 3}) )$ thanks to  \eqref{further-cvg-w}  and, again,  Condition \ref{cond:1}. 
 All in all, we have proven that 
\begin{equation}
\label{limit-passage-n}
\int_s^t \int_{\Omega\setminus\GC}\bbC 
(\sfe{+}  \bbM \epl(\sfw) \EEE ){:}
 \,\begin{pNiceArray}{cw{r}{0.3cm}c}[margin]
\Block{2-2}{ 0 } & &  \zeta^1_n     \\
& &  \zeta^2_n     \\
 \zeta^1_n   &  \zeta^2_n   &  \zeta^3_n  
\end{pNiceArray} \EEE \, 
\ell'_n \dd x \dd r=0.
\end{equation}
Then,  we take the limit of \eqref{limit-passage-n} as $n\to\infty$, obtaining \EEE
\[
\int_s^t \int_{U\times (a,b)}\bbC (\sfe{+}  \bbM \epl(\sfw) \EEE){:}
 \,\begin{pNiceArray}{cw{r}{0.3cm}c}[margin]
\Block{2-2}{ 0 } & &  \zeta^1     \\
& &  \zeta^2     \\
 \zeta^1   &  \zeta^2   &  \zeta^3  
\end{pNiceArray} \EEE \, 
 \dd x \dd r=0.
\]
Since the intervals  $(a, b)$, $[s, t]$, and the set $U$ are arbitrary, we deduce  the orthogonality relation \EEE
\begin{equation}
\label{final&crucial}
\bbC (\sfe {+} \bbM \epl(\sfw) \EEE){:}
 \,\begin{pNiceArray}{cw{r}{0.3cm}c}[margin]
\Block{2-2}{ 0 } & &  \zeta^1     \\
& &  \zeta^2     \\
 \zeta^1   &  \zeta^2   &  \zeta^3
\end{pNiceArray} \EEE \, =0 
 \qquad \text{for all } \zzeta=(\zeta^1,\zeta^2,\zeta^3) \in \R^3 \text{ and }  \aein\, (0,\infty)\times \Omega\,.
\end{equation}
Recalling the  characterization \eqref{properties-CM1}
 we then infer 
 $\sfe {+} \bbM \epl(\sfw)   = \bbM (\epl(\sfu){+}\epl(\sfw))$, whence \EEE
   \EEE
 \eqref{toSHOW-MM}. 
\medskip

\noindent
\textbf{Step $2$: limit passage in the weak momentum balance.}
We tackle here the limit passage in \eqref{weak-mom-eps},  integrated over a generic interval $[s,t]\subset [0,\infty)$, 
 by confining the discussion to test functions
$\varphi \in   \KLGD$,
so that 
\begin{equation}
\label{struct-eeps-varphi}
 e^{\eps_k}(\varphi)  \equiv 
  \,\begin{pNiceArray}{cw{r}{0.4cm}c}[margin]
\Block{2-2}{ \epl(\varphi)  } & &  0     \\
& &  0    \\
0   &  0   &  0
\end{pNiceArray} \,, 
\end{equation}
and individually
 addressing each  integral term. 

 With the very same argument as for \eqref{viscosity-tends-0}, we have  that  for all $ \varphi \in  \KLGD$
\begin{equation}
\label{lim-pass-visc-terms-a}
\begin{aligned}
&  \lim_{k\to\infty} \int_s^t \int_{\Omega{\setminus}\GC}   {\eps_k} \mathbb{D}_{\eps_k} e^{\eps_k}(\resed u{\eps_k}(r))
{:} e^{\eps_k}(\varphi) \dd x \dd  r =0,
\\
&   \lim_{k\to\infty}     \int_s^t \int_{\Omega{\setminus}\GC}   {\eps_k} \mathbb{D}_{\eps_k} e^{\eps_k}(\resed w{\eps_k}(r))
{:} e^{\eps_k}(\varphi) \dd x \dd r =0 \,.
\end{aligned}
\end{equation}
As for the terms involving the elasticity tensor $\bbC$, 
we exploit the  structure \eqref{struct-eeps-varphi}
of $e^{\eps_k}(\varphi)$ 
and combine   \eqref{further-cvg-w},   \EEE \eqref{conv-stress-u},  \eqref{toSHOW-MM} and  \eqref{reduced-elasticity-tensor}, to conclude that
 \EEE 
\begin{equation}
\label{lim-pass-el-terms}
\lim_{k\to \infty} \int_s^t \int_{\Omega{\setminus}\GC}   \bbC e^{\eps_k}(\rese u{\eps_k} (r){+}\rese w{\eps_k} (r))  {:} e^{\eps_k}(\varphi) \dd r \dd x =  \int_s^t \int_{\Omega{\setminus}\GC}   \bbC_\red \left(\epl(\sfu(r)){+} \epl(\sfw(r)) \right) {:} \epl(\varphi) \dd r \dd x\,. 
\end{equation}
 Now, the mapping $\alpha_\lambda:\R^3 \to \R^3$ is Lipschitz continuous with Lipschitz constant $\tfrac 2{\lambda}$. 
Hence, taking into account that $\alpha_\lambda(0)=0$, we infer that 
\begin{equation}
\label{est-alpha-jump}
\|\alpha_\lambda(\JUMP{\resei u{\eps_k}1, \resei u{\eps_k}2, 0})\|_{L^\infty (0,\infty;L^4(\GC))} \leq\frac{2}\lambda
\| \JUMP{\sfu^{\eps_k}} \|_{L^\infty (0,\infty;L^4(\GC))} \leq C,
 \end{equation}
where the last estimate follows from \eqref{est-rescaled-sols-2}. Therefore,  on account of the second of \eqref{hypB-1}, we have,   in fact for all $\varphi \in \Spu$, that \EEE
\begin{equation}
\label{lim-pass-alpha-terms}
\nu_{\eps_k}\int_s^t \int_{\GC}\alpha_\lambda(\JUMP{\resei u{\eps_k}1(r), \resei u{\eps_k}2(r), 0})\cdot \JUMP{\varphi_1,\varphi_2,0} \dd \Surf(x) \dd r
\longrightarrow  0 \,.
\end{equation}
Combining \eqref{convs-k-u1} and  \eqref{convs-k-z-7} we infer,   again  for all $\varphi \in \Spu$  \EEE
\begin{equation}
\label{lim-pass-adh-terms}
\int_s^t \int_{\GC} \kappa\rese z{\eps_k}(r)\JUMP{\rese u{\eps_k}(r)}\JUMP{\varphi}\,\mathrm{d}\Surf (x) \dd r\longrightarrow \int_s^t \int_{\GC} \kappa\sfz(r)\JUMP{\sfu(r)}\JUMP{\varphi}\,\mathrm{d}\Surf (x)  \dd r.
\end{equation}
Finally, by   \eqref{convergences-f} we obtain
 \begin{subequations}
 \begin{equation}
\label{lim-pass-inert-terms-f-phi}
 \int_s^t \int_\Omega \rese f{\eps_k}(r)\varphi \dd x \dd r
 \longrightarrow \int_s^t \int_\Omega \sff(r)\varphi \dd x \dd r \,.
\end{equation}
\end{subequations}
\par
\RCO All in all, we conclude the \emph{integrated momentum balance}
\[
\begin{aligned}
&
 \int_s^t 
\int_{\Omega{\setminus}\GC} \bbC_{\mathrm{r}} \epl(\rese u{}{+} \rese w{})  {:} \epl(\varphi)     \dd x  \dd r 
+\int_s^t \int_{\GC} \kappa\rese z{}\JUMP{\rese u{}}\JUMP{\varphi}\,\mathrm{d}\Surf (x) \dd r 
= \int_s^t \int_\Omega \rese f{}\varphi \dd x  \dd r 
\end{aligned}
\]
 for every $\varphi \in  \KLGD$.  \EEE
  Since  $[s,t]$ is an arbitrary sub-interval in $ [0,\infty)$,  the momentum balance \EEE \eqref{mom-bal-DR} follows.  
\medskip

\noindent 
\textbf{Step $3$: Improved convergences.}
 In this step we aim at improving the convergences
of some  of the  terms  contributing to $ \rese{E}{\eps_k} (\cdot,\rese u{\eps_k}(\cdot))$. In particular, we shall obtain the pointwise convergence \eqref{eq:uep-better} ahead.
\par
With
 this aim, we consider once again \eqref{weak-mom-eps},   choose as test function   $\rese u{\eps_k} $ 
 and integrate on a generic interval $[0,t]$.
Integrating by parts in time, we have 
\begin{equation}
\label{integration-by-parts}
\begin{aligned}
&
 \int_0^t \int_{\Omega{\setminus}\GC}  \eps_k \mathbb{D}_{\eps_k} e^{\eps_k}
 (\resed u{\eps_k}(r))
 {:} e^{\eps_k}(\rese u{\eps_k}(r))  \dd x \dd r 
= \frac{\eps_k} 2 \int_{\Omega{\setminus}\GC} \{  \mathbb{D}_{\eps_k} e^{\eps_k}(\rese u{\eps_k}(t)) {:} e^{\eps_k}(\rese u{\eps_k}(t))  
{-} \mathbb{D}_{\eps_k} e^{\eps_k}(\rese u{\eps_k}_0) {:} e^{\eps_k}(\rese u{\eps_k}_0) \}  \dd x \,.
\end{aligned}
\end{equation}
Thus,  from the momentum balance \eqref{weak-mom-eps} tested by     $\rese u{\eps_k}$,  we infer
\begin{equation}
\label{crucial-limsup}
\begin{aligned}
 \limsup_{k\to\infty} \Big(   & 
 \frac{\eps_k} 2 \int_{\Omega{\setminus}\GC} \mathbb{D}_{\eps_k} e^{\eps_k}(\rese u{\eps_k}(t)) {:} e^{\eps_k}(\rese u{\eps_k}(t))  \dd x 
{+} 
\int_0^t \int_{\Omega{\setminus}\GC}\mathbb{C}\rese e{\eps_k}(\rese u{\eps_k}){:}\rese e{\eps_k}(\rese u{\eps_k})\dd x\dd r
\\
& \quad \EEE
+
  \nu_{\eps_k}  \int_0^t \int_{\GC}\alpha_\lambda(\JUMP{\rese u{\eps_k}_1(r), \rese u{\eps_k}_2(r), 0})\cdot \JUMP{\rese u{\eps_k}_1(r), \rese u{\eps_k}_2(r), 0} \dd \Surf(x) \dd r
\\
& \quad  \EEE +   \int_0^t \int_{\GC} \kappa\rese z{\eps_k}(r)\JUMP{\rese u{\eps_k}(r)}\JUMP{\rese u{\eps_k}(r)}\,\mathrm{d}\Surf (x)  \dd r 
\Big) 
\leq   I_1 +I_2 +I_3+I_4  \,, 
\end{aligned}
\end{equation}
 where the integral terms  $(I_j)_{j=1}^4$   are discussed below:
\begin{enumerate}
 \item 
 Since  $\mathsf{u}_0^{\eps_k} \to \mathsf{u}_0$ 
in $H^1(\Omega{\setminus}\GC;\R^3)$, 
we have 
\[
\begin{aligned}
 I_1 : = \lim_{k\to\infty}  \frac {\eps_k} 2  \int_{\Omega{\setminus}\GC} \mathbb{D}_{\eps_k} e^{\eps_k}(\rese u{\eps_k}_0)
{:} e^{\eps_k}(\rese u{\eps_k}_0) \dd x    = 0.
 \end{aligned}
\]
\item 
By combining \eqref{convergences-f}  with 
\eqref{convs-k-u1}
we have 
\[
I_2 : = \lim_{k\to\infty} \int_0^t \int_\Omega \rese f{\eps_k}(r) \rese u{\eps_k}(r) \dd x  \dd r = \int_0^t \int_\Omega \rese f{}(r) \rese u{}(r) \dd x  \dd r\,.
\]
\item
 Combining the boundedness of  $( e^{\eps_k}
 (\resed w{\eps_k}))_k$
 and 
 $(e^{\eps_k}(\rese u{\eps_k}))_k$ in $ L_\loc^2 (0,\infty;L^2(\Omega;\R^{3\times 3})) $ with the fact that $\eps_k  \mathbb{D}_{\eps_k} \to0$ we infer
\[
\begin{aligned}
I_3   : = -\lim_{k\to\infty}  \int_0^t \int_{\Omega{\setminus}\GC}  \eps_k \mathbb{D}_{\eps_k} e^{\eps_k}
 (\resed w{\eps_k}(r))
 {:} e^{\eps_k}(\rese u{\eps_k}(r))  \dd x \dd r =0\,.
 \end{aligned}
\]
\item  
Finally, by combining \eqref{further-cvg-w}  with 
\eqref{conv-stress-u}
we have 
\[
I_4:  = -\lim_{k\to\infty} \int_0^t \int_{\Omega{\setminus}\GC}\mathbb{C}\rese e{\eps_k}(\rese w{\eps_k}){:}\rese e{\eps_k}(\rese u{\eps_k})\dd x\dd r  = -   \int_0^t 
\int_{\Omega{\setminus}\GC} \bbC_{\mathrm{r}}  \epl(\rese w{})  {:}  \epl(\rese u{})     \dd x  \dd r 
\,.
\]
\end{enumerate}
All in all, from \eqref{crucial-limsup} we conclude that \emph{for almost all}  $t\in (0,T)$
\[
\begin{aligned}
&
\limsup_{k\to\infty} \Big(
 \frac{\eps_k} 2 \int_{\Omega{\setminus}\GC} \mathbb{D}_{\eps_k} e^{\eps_k}(\rese u{\eps_k}(t)) {:} e^{\eps_k}(\rese u{\eps_k}(t))  \dd x  {+} 
\int_0^t \int_{\Omega{\setminus}\GC}\mathbb{C}\rese e{\eps_k}(\rese u{\eps_k}){:}\rese e{\eps_k}(\rese u{\eps_k})\dd x\dd r
\\
& \quad 
+ \nu_{\eps_k}  \int_0^t \int_{\GC}\alpha_\lambda(\JUMP{\rese u{\eps_k}_1, \rese u{\eps_k}_2, 0})\cdot \JUMP{\rese u{\eps_k}_1, \rese u{\eps_k}_2, 0} \dd \Surf(x) \dd r
+   \int_0^t \int_{\GC} \kappa\rese z{\eps_k}\JUMP{\rese u{\eps_k}}\JUMP{\rese u{\eps_k}}\,\mathrm{d}\Surf (x)  \dd r 
\Big)
\\
& \leq 
\int_0^t \int_\Omega \rese f{}  \rese u{} \dd x  \dd r-
\int_0^t 
\int_{\Omega{\setminus}\GC} \bbC_{\mathrm{r}}  \epl(\rese w{})  {:}  \epl(\rese u{})     \dd x  \dd r
  \\
  & \stackrel{(1)} =  \int_0^t 
\int_{\Omega{\setminus}\GC} \bbC_{\mathrm{r}}  \epl(\rese u{})  {:}  \epl(\rese u{})  \EEE   \dd x  \dd r 
+\int_0^t \int_{\GC} \kappa\rese z{}\JUMP{\rese u{}}\JUMP{\rese u{}}\,\mathrm{d}\Surf (x) \dd r\,,
\end{aligned}
\]
where {\footnotesize (1)} follows from testing the momentum balance \eqref{mom-bal-DR} by $\rese u{}$,  (which is an admissible test function since  $\rese u{}(t) \in  
\KLGD$).   
\par
In turn,
\begin{equation}
\label{all-liminfs}
\begin{cases}
\displaystyle  
 \liminf_{k\to\infty}  
 \frac{\eps_k} 2 \int_{\Omega{\setminus}\GC} \mathbb{D}_{\eps_k} e^{\eps_k}(\rese u{\eps_k}(t)) {:} e^{\eps_k}(\rese u{\eps_k}(t))  
 \dd x \dd r 
\geq 0  \EEE
\\
\displaystyle  
\liminf_{k\to\infty} \int_0^t \int_{\Omega{\setminus}\GC}  \mathbb{C} (\rese e{\eps_k}(\rese u{\eps_k}){:}\rese e{\eps_k}(\rese u{\eps_k}) \EEE
\dd x\dd r 
\\ \displaystyle  
\stackrel{(1)}\geq   \int_0^t \int_{\Omega{\setminus}\GC}   \mathbb{C} \rese e{} {:} \rese e{} \dd x \dd r \stackrel{(2)}= \int_0^t \int_{\Omega{\setminus}\GC}  \bbC_{\mathrm{r}} \epl(\rese u{})  {:} \epl(\rese u{}) \EEE     \dd x  \dd r \,,
\\
\displaystyle \liminf_{k\to\infty} \nu_{\eps_k}  \int_0^t \int_{\GC}\alpha_\lambda(\JUMP{\rese u{\eps_k}_1, \rese u{\eps_k}_2, 0})\cdot \JUMP{\rese u{\eps_k}_1, \rese u{\eps_k}_2, 0} \dd \Surf(x) \dd r \geq 0\,,
\\
\displaystyle 
\liminf_{k\to\infty}  \int_0^t \int_{\GC} \kappa\rese z{\eps_k}\JUMP{\rese u{\eps_k}}\JUMP{\rese u{\eps_k}}\,\mathrm{d}\Surf (x)  \dd r  
 \stackrel{(3)}\geq \EEE  \int_0^t \int_{\GC} \kappa\rese z{}\JUMP{\rese u{}}\JUMP{\rese u{}}\,\mathrm{d}\Surf (x) \dd r\,,
\end{cases}
\end{equation}
where    {\footnotesize (1)} is due to \eqref{conv-stress-u},  {\footnotesize (2)} is due to  \eqref{toSHOW-MM}
 \EEE while {\footnotesize (3)} follows from combining the weak convergence  \eqref{convs-k-u1} for $(\rese u{\eps_k})_k$ with the strong convergence \eqref{convs-k-z-7} for  $(\rese z{\eps_k})_k$, via the Ioffe Theorem (cf., e.g., \cite{Ioff77LSIF,Valadier90}). 
\par
Therefore, all inequalities in \eqref{all-liminfs} in fact hold with equalities, with $\liminf_{k\to\infty}$ replaced by $\lim_{k\to\infty}$, for almost all $t\in (0,\infty)$. 
In particular, 
\begin{equation}
\label{crucial-for-later}
\begin{aligned}
 \lim_{k\to\infty} \int_0^t \int_{\Omega{\setminus}\GC}  \mathbb{C}\rese e{\eps_k}
(\rese u{\eps_k}){:}\rese e{\eps_k}(\rese u{\eps_k})\dd x\dd r
& =  \int_0^t \int_{\Omega{\setminus}\GC}\mathbb{C}  \rese e{}{:} \rese e{}\dd x \dd r
= \int_0^t \int_{\Omega{\setminus}\GC} \bbC_{\mathrm{r}}   \epl(\rese u{})  {:} \epl(\rese u{})     \dd x  \dd r\,.
 \end{aligned}
\end{equation}
This strengthens the   weak convergence \eqref{conv-stress-u} 
  to 
\[
\rese e{\eps_k}(\rese u{\eps_k} )\to \rese e{}  \qquad \text{strongly in } L_\loc^2(0,\infty; L^2(\Omega;\R^{3\times 3})\,.
\]
Therefore, we have
\begin{equation}
\label{faticosa}
\epl(\rese u{\eps_k}) = (\rese e{\eps_k}(\rese u{\eps_k}))_{\mathrm{plan}}  \longrightarrow (\rese e{})_{\mathrm{plan}} 
= \epl(\sfu)
\end{equation} \EEE
strongly in $ L_\loc^2(0,\infty; L^2(\Omega;\R^{2\times 2})) $,
where with slight abuse we have used the notation
$
\rmA_{\mathrm{plan}}  = \left( \begin{array}{cc}
a_{11} & a_{12}
\\
a_{21} & a_{22} 
\end{array}
\right) $  for the $(2{\times}2)$-minor of  given $ \rmA \in \R^{3\times 3}\,.
$
 We combine \eqref{faticosa} with the previously obtained
 \eqref{towards-KL-structure} and ultimately obtain that 
 $e(\rese u{\eps_k} )
  \to e(\rese u{})$ in $ L_\loc^2(0,\infty; L^2(\Omega;\R^{3\times 3})$. Then,
 via Korn's inequality we  conclude  
\begin{equation}
\label{key-step-4th1}
\rese u{\eps_k}\to \rese {u}{}  \quad\text{strongly in }L_\loc^2(0,\infty;H^1(\Omega{\setminus}\GC;\R^3))\,.
\end{equation}
In particular, we have
 the \emph{pointwise} convergence
\begin{equation}
\label{eq:uep-better}
\rese u{\eps_k}(t)\to \rese {u}{}(t) \quad\text{strongly in }H^1(\Omega{\setminus}\GC;\R^3)) \qquad \foraa\, t \in (0,\infty)\,.
\end{equation}
\EEE
\medskip

\noindent
\textbf{Step $4$: limit passage in the semistability condition.}
 We now take the limit as $k\to\infty$ in the rescaled semistability condition \eqref{reduced-semistab-eps} \emph{at every $t\in (0,\infty)$, out of a negligible set, for which 
\eqref{eq:uep-better} holds}. 
 \EEE
 Arguing as in Step 2 of the proof of Theorem \ref{thm:pass-lim-nu},  for all $ \widetilde z\in \SBV(\GC;\{0,1\})$ with $\widetilde{z}\leq z(t)$  a.e. in $\GC$ we need to construct a sequence
 $(\widetilde{z}_{\eps_k})_k$ such that 
 \begin{equation}
 \label{limsup-MRS-epsk}
 \begin{aligned}
 &
 \limsup_{k\to\infty} \Big(   \int_{\GC}  \tfrac \kappa 2   (\widetilde{z}_{\eps_k} {-} \rese z{\eps_k}(t)) |\JUMP{\rese u{\eps_k}(t)}|^2 \dd \Surf(x) 
 +\mathrm{b}_{\eps_k} ( P(\widetilde{Z}_{\eps_k},\GC) {-}  P(\rese Z{\eps_k}(t),\GC) )  
 \\
&\qquad \qquad 
+\int_{\GC}  (a^{\eps_k}_0 {+}a^{\eps_k}_1)  |\widetilde{z}_{\eps_k}{-}\rese z{\eps_k}(t)| \dd \Surf(x)  \Big)
\\
&
\leq  \int_{\GC}  \tfrac \kappa 2   (\widetilde{z} {-} \rese z{}(t)) |\JUMP{\rese u{}(t)}|^2 \dd \Surf(x) 
 +\mathrm{b} ( P(\widetilde{Z},\GC) {-}  P(\rese Z{}(t),\GC) )  
+\int_{\GC}  (a_0 {+}a_1)   |\widetilde{z}{-}\rese z{}(t)| \dd \Surf(x) 
\end{aligned}
 \end{equation}
 (as usual, we denote by $\widetilde{Z}_{\eps_k}$, $\rese Z{\eps_k}(t)$, $\widetilde{Z}$, and $
\rese Z{}(t)$, the finite-perimeter sets associated with $\widetilde{z}_{\eps_k}$, $\rese z{\eps_k}(t)$, $\widetilde{z}$, and $
\rese z{}(t)$, respectively). 
To obtain \eqref{limsup-MRS-epsk}, we repeat verbatim the construction from \eqref{MRS}. Thus, we obtain  a sequence $(\widetilde{z}_{\eps_k} )_k \subset \SBV(\GC;\{0,1\})$ such that 
 $\widetilde{z}_{\eps_k} \to \widetilde z$ in $L^q(\GC)$  for all $1\leq q <\infty$. 
 Combining this with the fact that 
 \[
 \JUMP{\rese u{\eps_k}(t)} \to \JUMP{\rese u{}(t)} \qquad \text{strongly in } 
  L^{4}(\GC), \EEE 
 \]
 we readily infer 
  \begin{equation}
  \label{semistab-adh-en}
 \lim_{k\to\infty}  \int_{\GC}   \tfrac \kappa2 ( \widetilde{z}_{\eps_k}{-} \rese z{\eps_k}(t)) |\JUMP{\rese u{\eps_k}(t)}|^2 \dd \Surf(x)  = 
   \int_{\GC} \tfrac \kappa 2  ( \widetilde{z}{-} \rese z{}(t)) |\JUMP{\rese u{}(t)}|^2 \dd \Surf(x) \,.
 \end{equation}
 We handle the other terms in \eqref{limsup-MRS-epsk} exactly in the same way as in  Step 2 of the proof of Theorem \ref{thm:pass-lim-nu}. Ultimately, we conclude  that for almost all $t\in (0,T) $ and for all $ \widetilde z\in \SBV(\GC;{0,1})$ with $ \widetilde{z}\leq z(t)$   a.e.\ in $\GC$
there holds
\begin{equation}
\label{true-semistability}
\begin{aligned}
    &
\int_{\GC} \tfrac \kappa 2  \sfz(t) |\JUMP{\sfu(t)}|^2 \dd \Surf(x) + 
\mathrm{b} P(\rese Z{}(t),\GC)  -   \int_{\GC}  a_0  \sfz(t) \dd \Surf(x)
\\ &  \leq  \int_{\GC} \tfrac \kappa 2   \widetilde{z} |\JUMP{\sfu(t)}|^2 \dd \Surf(x) 
+\mathrm{b} P(\widetilde{Z},\GC) - \int_{\GC}   a_0 \widetilde z \dd \Surf(x)
+\int_{\GC}  a_1   |\widetilde{z}{-}\sfz(t)| \dd x \,.
\end{aligned}
\end{equation}
Hence, we have obtained  the semistability condition \eqref{semistab-DR}. \EEE

\medskip

\noindent
\textbf{Step $5$: limit passage in the energy-dissipation inequality.}
 We now address the   limit  passage as $k\to \infty$ in  \eqref{EDB-eps}, written on the interval $[0,t]$.
Let us first tackle the terms on the left-hand side of  \eqref{EDB-eps}:
combining convergences   \eqref{crucial-for-later},  and \eqref{eq:uep-better} with conditions  \eqref{convergences-f}--\eqref{cond-rescal-data} we have 
\begin{equation}
\label{liminf-inequality-bulk-energy}
\lim_{k\to\infty} \resei{E}{\eps_k}{\mathrm{bulk}}(t, \rese u{\eps_k}(t)) =  \resei{E}{}{\mathrm{bulk}}(t, \rese u{}(t))  \qquad \foraa\, t \in (0,\infty)\,.
\end{equation}
Additionally, by \eqref{convs-k-z-5} \& \eqref{convs-k-z-6} we have
\begin{equation}
\label{liminf-inequality-surface-energy}
\liminf_{k\to\infty} \resei{E}{\eps_k}{\mathrm{surf}}(t, \rese u{\eps_k}(t),  \rese z{\eps_k}(t)) \geq  \resei{E}{}{\mathrm{surf}}(t, \rese u{}(t), \rese z{}(t))  \qquad \foraa\, t \in (0,\infty)\,.
\end{equation}
By the very same convergences we have  for every $t\in [0,\infty)$ 
\begin{equation}
\label{liminf-inequality-R-Var}
 \Var_{\mathsf{R}^{\eps_k}}(\rese z{\eps_k}, [0,t]) = \int_{\GC} a_{\eps_k}^1 (\rese z{\eps_k}(0){-}\rese z{\eps_k}(t)) \dd \Surf(x) \longrightarrow  \int_{\GC} a^1 (\rese z{}(0){-}\rese z{}(t)) \dd \Surf(x)  =  \Var_{\mathsf{R}}(\rese z{}, [0,t])\,.
\end{equation}
All in all, we conclude that 
\[
 \Var_{\mathsf{R}}(\sfz, [0,t])+
\rese E{}(t,\sfu(t),\sfz(t)) \leq \liminf_{k\to\infty} \left( \text{l.h.s.\ of \eqref{EDB-eps}} \right)  \qquad \foraa\, t \in (0,\infty)\,.
\]
As for the right-hand side, we have 
\begin{equation}
\label{liminf-inequality-power}
\begin{aligned}
&
\lim_{k\to\infty}\int_0^t \partial_t  \rese{E}{\eps_k}(r, \rese u{\eps_k}(r), \rese z{\eps_k}(r)) \dd r 
\\
&
=  \lim_{k\to\infty}\int_0^t  \Big( {-}\int_{\Omega} \resed f{\eps_k}(r) \cdot \rese u{{\eps_k}} (r) \dd x +
 \int_{\Omega{\setminus}\GC} \bbC e^{\eps_k}(\resed w{\eps_k}(r)): e^{\eps_k}(\rese u{\eps_k}(r)) \dd x   
\\
& \qquad \qquad  +{\eps_k}\int_{\Omega{\setminus}\GC} \bbD_{\eps_k} e^{\eps_k}(\resedd w{\eps_k}(r)): e^{\eps_k}(\rese u{\eps_k}(r)) \dd x 
\Big) \dd r 
\\
& \stackrel{(1)}= \int_0^t  \Big( {-}\int_{\Omega} \resed f{}(r) \cdot \rese u{}(r) \dd x +
 \int_{\Omega{\setminus}\GC} \bbC_{\mathrm{r}} \epl(\resed w{}(r)): \epl(\rese u{}(r)) \dd x \Big) \dd r
 = \int_0^t \partial_t  \rese{E}{}(r, \rese u{}(r), \rese z{}(r)) \dd r\,.
\end{aligned}
\end{equation}
Indeed, 
\begin{itemize}
\item[-]
 thanks to 
\eqref{convergences-f} and \eqref{convs-k-u1} we have  
 $\displaystyle \int_0^t  \int_{\Omega} \resed f{\eps_k} \cdot \rese u{{\eps_k}} \dd x \dd r \to \int_0^t  \int_{\Omega} \resed f{} \cdot \rese u{} \dd x \dd r  $;
\item[-] 
 by  \eqref{further-cvg-w},  we have \EEE
\[
\lim_{k\to\infty}\int_0^t  \int_{\Omega{\setminus}\GC} \bbC e^{\eps_k}(\resed w{\eps_k}(r)): e^{\eps_k}(\rese u{\eps_k}(r)) \dd x  \dd r = \int_0^t  \int_{\Omega{\setminus}\GC} \bbC_{\mathrm{r}} \epl(\resed w{}(r)): \epl(\rese u{}(r)) \dd x  \dd r;
\]
\item[-]   we use that 
\[
\begin{aligned}
&
\lim_{k\to\infty} \int_0^t \int_{\Omega{\setminus}\GC}\eps_k \bbD_{\eps_k} e^{\eps_k}(\resedd w{\eps_k}(r)): e^{\eps_k}(\rese u{\eps_k}(r)) \dd x  \dd r
\\
&= \lim_{k\to\infty} \int_0^t \int_{\Omega{\setminus}\GC}\eps_k^{\beta} \bbD_{\eps_k} \eps_k^{1-\beta}e^{\eps_k}(\resedd w{\eps_k}(r)): e^{\eps_k}(\rese u{\eps_k}(r)) \dd x  \dd r
 =0,
 \end{aligned}
\]
combining the second bound in  \eqref{bounds-w} and the boundedness of 
$(e^{\eps_k}(\rese u{\eps_k})_k$ 
 with the fact that $\eps_k^{\beta} \bbD_{\eps_k}\to 0$.
\end{itemize}
Ultimately, we conclude \eqref{EDI-lim}.

%

\medskip

This finishes the proof of Theorem \ref{mainthm-1}. 
\QED

\begin{remark}
\upshape
\label{rmk:why-we-cant}
Strengthening the weak convergence
\eqref{convs-k-u1}
 of $(\rese u{\eps_k})_k$ to $\rese u{}$ to the strong convergence \eqref{key-step-4th1} 
 has marked  a
 crucial point in the proof. In fact,  it has allowed us to carry out the limit passage in the semistability condition, because it has led to \eqref{semistab-adh-en},
 necessitating the \emph{strong}, pointwise-in-time convergence of the jumps $(\JUMP{(\rese u{\eps_k}(t)})_k$. 
 \par
 Since \eqref{key-step-4th1}  is solely an \emph{integral} convergence, it implies pointwise convergence   except on a negligible set of $(0,\infty)$: this is the reason why,
 for the limiting system $(\rese R{},\rese E{})$ we have obtained the semistability condition and, ultimately, the enegy-dissipation inequality only for \emph{almost all} $t\in (0,\infty)$. 
 \par
 As for our strategy for
 obtaining  \eqref{key-step-4th1}, we recall that, 
due to the loss of the damping term in the limit problem we have  been missing the estimates on $(e(\resed u{\eps_k}))_k$ that would have been instrumental in proving 
 \eqref{key-step-4th1}. We have been able to obtain it only via the argument  in Step $3$ of the above proof.  In turn, to carry  out such argument
  we have had to get rid of the inertial terms even on the level of the approximate problem by setting $\varrho_{\eps_k} \equiv 0$ in Condition \ref{cond:1}.
\end{remark}
\EEE
%
%
\section{Proof of Theorem   \ref{mainthm-2}} 
\label{s:proof-th2}
 In the proof of  Thm.\   \ref{mainthm-2}, a major role will be played by 
an `extended' version of the operator $\mathbb{M}: \R_{\mathrm{sym}}^{2\times2} \to  \R_{\mathrm{sym}}^{3\times3} $
from \eqref{def-oper-M}. 
We are going to introduce it in Section \ref{ss:6.1} ahead.
Then, in Sec.\ \ref{ss:6.2} we will carry out the proof of  Thm.\   \ref{mainthm-2}. \EEE
\subsection{The operator $\mathcal{M}_{\kern-1pt{\tiny\textsc{VE}}}$}
\label{ss:6.1}
Recall that  the operator $\mathbb{M}: \R_{\mathrm{sym}}^{2\times2} \to  \R_{\mathrm{sym}}^{3\times3} $ from
\eqref{def-oper-M} could be either defined via a minimum problem or through an orthogonality condition.
In the following lines we will 
 introduce  an operator
  $\Mextname$ given by means of an orthogonality indentity involving \emph{both} the elasticity tensor $\bbC$ \emph{and} the viscosity tensor $\mathsf{D}$ from Hypothesis \ref{Hyp-D}. We will  then show how $\Mextname$ is equivalently characterized by an implicit minimum problem expressed
in a \emph{temporally nonlocal} fashion.

  \par In order to formulate our definition and the implicit minimum problem 
in a compact form and avoid cumbersome notation, preliminarily we need to  settle the following notation: with a  matrix \EEE  $\Xi = (\xi_{ij}) \in   \R_{\mathrm{sym}}^{2\times2} $
 and  a vector $\eeta  = (\eta_1,\eta_2,\eta_3)\in \R^3$ we associate the  symmetric  $(3{\times}3)$-matrix 
 \begin{equation}
 \label{construction-mix}
 \mix \Xi \eeta: = \left( 
 \begin{array}{llll}
 \xi_{11} & \xi_{12} & \eta_1
 \smallskip
 \\
 \xi_{12}  &  \xi_{22} & \eta_2
  \smallskip
 \\
  \eta_1 &  \eta_2 &  \eta_3
 \end{array} 
 \right)\,.
 \end{equation}
 We will also use the notation 
 \[
 \mix {\mathbf{O}} \eeta: =  \left(
\begin{array}{llll}
  0 & 0  &   \vspace{-0.1 cm}   \eta_1 \EEE
 \smallskip
  \\
0& 0   &    \eta_2 \EEE
\\ 
 \smallskip  \eta_1 \EEE &    \eta_2  \EEE &  \eta_3 \EEE
\end{array} \right) \qquad \text{for all } \eeta \in \R^3. 
 \]
Moreover, in addition to the previously introduced quadratic form
$\Lambda_{\bbC}$ defined by $\Lambda_{\bbC}(A): = \tfrac12 \bbC A: A$ for all $A\in \R_\sym^{3\times 3}$, we also bring into play the quadratic form
$\Lambda_{\mathsf{D}}$
associated with the viscosity tensor $\mathsf{D}$ from Hyp.\ \ref{Hyp-D}. We further recall the definition of the matrix $\mathscr{D}_3$=$(\mathscr{D}_3)_{ij}=\mathsf{D}_{i3j3}$.

We now introduce \EEE 
the mapping 
\[
\Mextname: \R^{3}\times H^1_{\loc}(0,\infty; \R_{\mathrm{sym}}^{2\times2})  \to   H^1_{\loc}(0,\infty;\R_{\mathrm{sym}}^{3\times3})  
\]
  as follows. \EEE 
\begin{definition}
\label{def-mextname-new}
Assume that  $\mathscr{D}_3$ is invertible.
 For all  $\boldsymbol{m}\in \R^{3}$ and $\Xi \in  H^1_{\loc}(0,\infty; \R_{\mathrm{sym}}^{2\times2}) $ we define $\Mextname (\boldsymbol{m},\Xi): = \Upsilon$, where 
 the map 
 $\Upsilon \in H^1_{\loc}(0,\infty; \R_{\mathrm{sym}}^{3\times3})$,  
fulfills 
\begin{subequations}
\label{Upsilon-definition}
\begin{align}
\label{initial-condition-m}
&
\Upsilon_{i3}(0): =\boldsymbol{m}_{i}, &&  i=1,2,3 &&
\\
&
\label{identification-minors}
\Upsilon_{\mathrm{plan}}(t)  =\Xi(t) 
 && &&  \foraa\, t \in (0,\infty), 
\intertext{as well as }
&
\label{charact-M-mixed}
  (\bbC\Upsilon(t){+}\mathsf{D}\dot{\Upsilon}(t)) :  \mix{\mathbf{O}}{\zzeta} = 0  &&  \text{ for all }  \zzeta \in \R^3  &&  \foraa t \in (0,\infty). 
  \end{align}
 \end{subequations}
 \end{definition} \EEE
The next lemma ensures that the operator $\Mextname:   \R^3  
\times H^1_{\loc}(0,\infty; \R_{\mathrm{sym}}^{2\times2})  \to   H^1_{\loc}(0,\infty;\R_{\mathrm{sym}}^{3\times3})  
$ is well defined.
\begin{lemma}
Assume that  $\mathscr{D}_3$ is invertible.  
 Then, for every $\boldsymbol{m}\in \R^{3}$ and $\Xi \in  H^1_{\loc}(0,\infty; \R_{\mathrm{sym}}^{2\times2})$ there exists a unique 
 $\Upsilon\in H^1_{\loc}(0,\infty;\R_{\mathrm{sym}}^{3\times3})$  satisfying all the conditions in Definition \ref{def-mextname-new}.
\end{lemma}
\begin{proof}
We preliminary observe that The map $\Upsilon$ solves $\Upsilon(0)=\mix \Xi {\boldsymbol m}$ and,  rewriting \eqref{charact-M-mixed} in components,  
\begin{equation*}    
    \sum_{\alpha,\beta=1}^2 \left(\bbC_{i3\alpha\beta}\Xi_{\alpha,\beta}(t)+\mathsf{D}_{i3\alpha\beta}\dot{\Xi}_{\alpha,\beta}(t)\right)+2\sum_{k=1}^3 \left(\bbC_{i3k3}\Upsilon_{k3}(t)+\mathsf{D}_{i3k3}\dot{\Upsilon}_{k3}(t)\right)=0,
\end{equation*}
for every $i=1,2,3$, for a.a. $t\in (0,\infty)$. We denote by $\mathscr{C}_3$ the matrix $(\mathscr{C}_3)_{ij}=\bbC_{i3j3}$, and we consider the vector functions $ t \mapsto \uupsilon_3(t) : = (\Upsilon_{13}(t), \Upsilon_{23}(t), \Upsilon_{33}(t)) \in H^1_{\loc}(0,\infty;\R^3)$ and  $ t \mapsto \boldsymbol{y}(t)
\in H^1_{\loc}(0,\infty;\R^3)$ fulfilling 
\[
y_i(t)= -\frac12\sum_{\alpha,\beta=1}^2 \left(\bbC_{i3\alpha\beta}\Xi_{\alpha,\beta}(t)+\mathsf{D}_{i3\alpha\beta}\dot{\Xi}_{\alpha,\beta}(t)\right)  \text{ 
for every $i=1,2,3$, for a.a. $t\in (0,\infty)$.}
\]
We find that $\uupsilon_3$ solves the linear ODE-system
\begin{equation*}
    \begin{cases}    \mathscr{C}_3\uupsilon_3(t)+\mathscr{D}_3\dot{\uupsilon}_3(t)=
 \boldsymbol{y}(t) 
     &\\
    \uupsilon_3(0)=\boldsymbol{m},
    \end{cases}
\end{equation*}
which, owing to the invertibility of $\mathscr{D}_3$, rewrites as
\begin{equation}
\label{eq:ODE-system}
    \begin{cases}    \mathscr{D}_3^{-1}\mathscr{C}_3\uupsilon_3(t)+\dot{\uupsilon}_3(t)=
    \mathscr{D}_3^{-1} \boldsymbol{y}(t)
    &\\
    \uupsilon_3(0)=\boldsymbol{m}.
    \end{cases}
\end{equation}
The existence and uniqueness of $\uupsilon_3$ and of $\Upsilon$ follow then by the Picard-Lindel\"of Theorem for linear systems of ODEs.
\end{proof}

In Lemma \ref{l:charact-M} below we provide an alternative characterization of $\Mextname$ as the outcome of an implicit minimization procedure.

\begin{lemma}
\label{l:charact-M}
 Assume that  $\mathscr{D}_3$ is invertible. Let $\boldsymbol{m}\in  \R^3 \EEE$ 
 and $ \Xi \in  H^1_{\loc}(0,\infty; \R_{\mathrm{sym}}^{2\times2})$ be given. Then,
 $\Upsilon = 
  \Mext(\boldsymbol{m},\Xi)$
 if and only if  
 $\Upsilon(0)=\mix \Xi {\boldsymbol m}$, \EEE
 and
 \begin{subequations}
\label{def-mextname}
\begin{align}
&
\label{def-mextname-1}
 \Upsilon(t) = \mix{\Xi(t)}{\llambda_{\Xi}(t)} \qquad  \text{ with } \llambda_{\Xi} \in  H^1_{\loc}(0,\infty;\R^3) \text{ fulfilling  for a.a. } t \in (0,+\infty)
 \\
 & 
 \nonumber
 \Lambda_{\bbC}( \mix{\Xi(t)}{\llambda_{\Xi}(t)} ) +  \Lambda_{\mathsf{D}}\left( \mix{\dot{\Xi}(t)}{\dot{\llambda}_{\Xi}(t)}\right)\\
 &\qquad\label{def-mextname-2}= \mathrm{Min}_{\eeta \in \R^3} \left\{ \Lambda_{\bbC}( \mix{\Xi(t)}{\eeta}) +   \Lambda_{\mathsf{D}}\left( \mix{\dot\Xi(t)}{(\dot{\llambda}_{\Xi}(t){-}\llambda_\Xi(t){+}\eeta)}\right)\right\},
\end{align}
\end{subequations}
where we specify that  
\[
 \mix{\dot\Xi}{(\dot{\llambda}_{\Xi}{-}\llambda_\Xi{+}\eeta)} =  \left( 
 \begin{array}{llll}
 \dot{\xi}_{11} & \dot{\xi}_{12} & \dot{\lambda}_1 - \lambda_1+ \eta_1
 \smallskip
 \\
\dot{\xi}_{12}  &  \dot{\xi}_{22} &  \dot{\lambda}_2 - \lambda_2+  \eta_2
  \smallskip
 \\
 \dot{\lambda}_1 - \lambda_1+   \eta_1 &  \dot{\lambda}_2 - \lambda_2+   \eta_2 &  \dot{\lambda}_3 - \lambda_3+   \eta_3
 \end{array} 
 \right)
\]
(we have dropped  \EEE the $t$-dependence and the subscript $\Xi$
on the right-hand side
 for better readability). 
 
\end{lemma}
\color{black}
\begin{proof}
With any  $\eeta =(\eta_1,\eta_2,\eta_3) \in \R^3$ we associate the tensor
$\mix \Xi \eeta$  as in \eqref{construction-mix}.  
Let \GIO 
$\Upsilon = 
  \Mext(\boldsymbol{m},\Xi)$
\EEE
 with $\Upsilon(t) = (\Upsilon_{ij}(t))$.
We consider the vector function $ t \mapsto \uupsilon_3(t) : = (\Upsilon_{13}(t), \Upsilon_{23}(t), \Upsilon_{33}(t)) \in H^1_{\loc}(0,\infty;\R^3)$ and the time-dependent  tensor
(from now on, we omit the time variable for notational simplicity)
\[
 \mix{\dot\Xi}{(\dot{\uupsilon}_{3}{-}\uupsilon_3{+}\eeta)} =  \left( 
 \begin{array}{llll}
 \dot{\xi}_{11} & \dot{\xi}_{12} & \dot{\upsilon}_{13} - \upsilon_{13}+ \eta_1
 \smallskip
 \\
\dot{\xi}_{12}  &  \dot{\xi}_{22} &  \dot{\upsilon}_{23} - \upsilon_{23}+  \eta_2
  \smallskip
 \\
 \dot{\upsilon}_{13} - \upsilon_{13}+   \eta_1 &  \dot{\upsilon}_{23} - \upsilon_{23}+   \eta_2 &  \dot{\upsilon}_{33} - \upsilon_{33}+   \eta_3
 \end{array} 
 \right)\,.
\]
By elementary calculations, taking into account \eqref{identification-minors} we have 
\[
\begin{aligned}
&
\bbC \mix \Xi \eeta : \mix \Xi \eeta 
 &&  = &&  \bbC \Upsilon : \Upsilon + \bbC \mix{\mathbf{O}}{\eeta{-}\uupsilon_3} : \mix{\mathbf{O}}{\eeta{-}\uupsilon_3}  + 2\bbC \Upsilon : \mix{\mathbf{O}}{\eeta{-}\uupsilon_3}  \,;
\\
&
\mathsf{D} \mix{\dot \Xi}{(\dot{\uupsilon}_{3}{-}\uupsilon_3{+}\eeta)}: \mix{\dot \Xi}{(\dot{\uupsilon}_{3}{-}\uupsilon_3{+}\eeta)}&&  = &&  \mathsf{D} \dot\Upsilon : \dot\Upsilon + \mathsf{D}  \mix{\mathbf{O}}{\eeta{-}\uupsilon_3}:   \mix{\mathbf{O}}{\eeta{-}\uupsilon_3}  + 2\mathsf{D} \dot{\Upsilon} : \mix{\mathbf{O}}{\eeta{-}\uupsilon_3}\,.
\end{aligned}
\]
Therefore, by the positive-definiteness of the tensors $\bbC$ and $\mathsf{D}$, and recalling the definition of the operator $\Mextname$, we have that 
\[
\Lambda_{\bbC} (\mix \Xi \eeta) +\Lambda_{\mathsf{D}}\left( \mix{\dot \Xi}{(\dot{\uupsilon}_{3}{-}\uupsilon_3{+}\eeta)}\right)
\geq \Lambda_{\bbC}(  \Upsilon) + \Lambda_{\mathsf{D}}( \dot\Upsilon),
\]
and the statement
 follows. 
\end{proof}
 
\par
 The next result shows that, under the symmetry  condition \eqref{condX-1}
for the elasticity and viscosity tensors $\bbC$ and $\mathsf{D}$,
the analogue of  Lemma \ref{lm:identif} \EEE
holds for the
operator $\Mextname$. 
\begin{lemma}
\label{l:useful-X}
Assume that, in addition to  \eqref{assCD} and \eqref{viscosityTensor},  the 
tensors
 $\bbC$ and $\mathsf{D}$
 satisfy \eqref{condX-1} and the matrix $\mathscr{D}_3$ is invertible. \EEE 
Then, for every   $u \in W_{\loc}^{1,2}(0,\infty;\KL)$ we have
\begin{equation}
\label{identification-Mext-KL}
\Mext[\boldsymbol{0},\epl(u)] = e(u) = \mix{\epl(u)}{\boldsymbol{0}} \,.
\end{equation}
\end{lemma}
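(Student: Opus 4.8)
The plan is to reduce the claim to the characterization of $\Mextname$ provided by Lemma \ref{l:charact-M}, exactly as Lemma \ref{lm:identif} reduced the analogous statement for $\mathbb{M}$ to the variational characterization \eqref{properties-CM1}. Fix $v \in W_{\loc}^{1,2}(0,\infty;\KL)$. Since $v(t) \in \KL$ and $\dot v(t) \in \KL$ for a.a.\ $t$, the Kirchhoff-Love structure gives $(e(v))_{i3} = (e(v))_{3i} = 0$ and $(e(\dot v))_{i3} = (e(\dot v))_{3i} = 0$ for all $i$; in particular $e(v)(t) = \mix{\epl(v)(t)}{\boldsymbol 0}$ and likewise for $\dot v$, so that the candidate $\Upsilon(t) := e(v)(t)$ automatically satisfies the minor conditions \eqref{identification-minors} with $\Xi := \epl(v)$ (note $\dot\Upsilon_{\mathrm{plan}} = \epl(\dot v) = \dot\Xi$, using that $\epl$ commutes with the time derivative).

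The first step is then to verify the orthogonality relation in \eqref{charact-M-mixed}, namely
\[
(\bbC\,\Upsilon(t) + \mathsf{D}\,\dot\Upsilon(t)) : \mix{\mathbf O}{\zzeta} = 0 \qquad \text{for all } \zzeta \in \R^3, \ \foraa t.
\]
Here is where Hypothesis \ref{Hyp-E-new} enters: since both $\bbC$ and $\mathsf{D}$ comply with condition \eqref{condX-1} and both $E := \Upsilon(t) = e(v)(t)$ and $E := \dot\Upsilon(t) = e(\dot v)(t)$ are "only planar" in the sense of \eqref{corollary-of-condX}, the implication recorded after \eqref{corollary-of-condX} yields that $\bbC\,\Upsilon(t)$ and $\mathsf{D}\,\dot\Upsilon(t)$ are themselves planar, i.e.\ their $(i3)$ and $(3j)$ entries vanish. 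Consequently the contraction of $\bbC\,\Upsilon(t) + \mathsf{D}\,\dot\Upsilon(t)$ with any matrix of the form $\mix{\mathbf O}{\zzeta}$ — which only has nonzero entries in the third row and third column — is zero. This is precisely the right-hand condition in \eqref{charact-M-mixed}.

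The final step is to apply Lemma \ref{l:charact-M}: since $\Upsilon = e(v) \in H^1_{\loc}(0,\infty;\R^{3\times3}_{\mathrm{sym}})$ satisfies \eqref{identification-minors} and the orthogonality condition, we conclude $\Upsilon(t) = \Mext[\epl(v)](t)$ for a.a.\ $t$, which is \eqref{identification-Mext-KL}. I do not anticipate a serious obstacle here; the only point requiring mild care is the regularity bookkeeping — checking that $v \in W^{1,2}_{\loc}(0,\infty;\KL)$ indeed gives $\epl(v) \in H^1_{\loc}(0,\infty;\R^{2\times2}_{\mathrm{sym}})$ and $e(v) \in H^1_{\loc}(0,\infty;\R^{3\times3}_{\mathrm{sym}})$, so that Lemma \ref{l:charact-M} applies verbatim — but this is immediate from the definition \eqref{H1KL} and the continuity of the linear map $\epl$. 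Everything else is the same two-line argument as in Lemma \ref{lm:identif}, now carried out at the level of the time-dependent, viscoelastic minimization \eqref{def-mextname}.
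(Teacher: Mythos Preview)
Your proposal is correct and follows essentially the same route as the paper: both arguments set $\Upsilon=e(v)$, invoke \eqref{corollary-of-condX} (applied to each of $\bbC$ and $\mathsf{D}$ under condition \eqref{condX-1}) to make $(\bbC e(v)+\mathsf{D} e(\dot v))_{i3}$ vanish, and then appeal to the characterization in Lemma~\ref{l:charact-M}. Your write-up is a bit more explicit about the regularity bookkeeping, but the substance is identical.
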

\begin{proof}
We use that, 
 for $\boldsymbol{m} = \boldsymbol{0}$ and $\Xi = \epl(u),$
conditions \eqref{initial-condition-m} and \eqref{identification-minors} respectively read 
$
\Upsilon_{i3}(0) =0$ for $  i=1,2,3$ and
 $\Upsilon_{\mathrm{plan}}  = \epl(u)$ a.e.\ in $(0,\infty)$. In turn, due to 
 \eqref{condX-1}, \EEE  system \eqref{eq:ODE-system} rewrites as
\begin{equation*}
    \begin{cases}    \mathscr{D}_3^{-1}\mathscr{C}_3\uupsilon_3(t)+\dot{\uupsilon}_3(t)
     = \boldsymbol{0}\EEE&\\
    \uupsilon_3(0)=\boldsymbol{0}.
    \end{cases}
\end{equation*}
 Then, $\uupsilon_3(t) \equiv \boldsymbol{0}$,  i.e., $\Upsilon_{i3}(t) \equiv 0$  for $i=1,2,3$.  Thus, \eqref{identification-Mext-KL} ensues. \EEE
\end{proof}
  \EEE

%
\subsection{Proof of Theorem   \ref{mainthm-2}}
\label{ss:6.2}
 In carrying out the proof of  Thm.\  \ref{mainthm-2}
we shall  revisit the steps of the proof of Theorem \ref{mainthm-1}, dwelling on the main differences. In what follows, for notational simplicity we will  write integrals,
in place of duality pairings, for the inertial terms in the momentum balance equation. 
\\\textbf{Step $0$: compactness.} 
Our starting point is again provided by the a priori estimates from Prop.\ \ref{prop:a-PRIO}.  Recall that, by  Hypothesis D (cf.\ in particular 
 \eqref{hypD-2}), we have $ \eps_k \bbD_{\eps_k} \approx  \mathsf{D}$ with  $\mathsf{D}$ a \emph{positive definite} tensor. Thus. 
 the bound for 
$ (\eps_k \|\bbD_{\eps_k} e^{\eps_k}(\resed u{\eps_k}) {:}  e^{\eps_k}(\resed u{\eps_k})_k$ in 
$L^1_\loc(0,\infty; L^1(\Omega;\R^{3\times 3}))$
 in
\eqref{est-rescaled-sols-4-bis}  now  \EEE ensures that  \emph{also} the sequence  $(e^{\eps_k}(\resed u{\eps_k}))_k$ is bounded in 
$L^2_\loc(0,\infty; L^2(\Omega;\R^{3\times 3}))$. A fortiori, we gather that $(\rese u{\eps_k})_k$
 is bounded in $H^1_\loc(0,\infty;H_{\Dir}^1 (\Omega{\setminus}\GC;\R^3))$. Therefore, 
  we now have \RREV (cf.\ \eqref{towards-KL-structure}) \EEE
 \[
\partial_3 \resed u{\eps_k}_3 \to 0, \quad  \left( \partial_1 \resed u{\eps_k}_3{+}\partial_3 \resed u{\eps_k}_1 \right)  \to 0, \quad 
 \left( \partial_3 \resed u{\eps_k}_2{+}  \partial_2 \resed u{\eps_k}_3 \right) \to 0 \qquad \text{ in }  L_\loc^2(0,\infty;L^2(\Omega)).
\]
Hence,
there exists   $\sfu \in H_\loc^{1}(0,\infty;  \KLGD)$ \EEE  such that
\begin{subequations}
\label{convs-TH2-u}
\begin{equation}
\label{TH2convs-k-u1}
\rese u{\eps_k} \weaksto \sfu \text{ in } H_\loc^1(0,\infty;H_{\Dir}^1 (\Omega{\setminus}\GC;\R^3))
\end{equation}
and, by the compactness results from \cite{Simon87}, we then conclude
\begin{equation}
\label{TH2convs-k-u1-bis}
\rese u{\eps_k} \to \sfu \text{ in } \mathrm{C}^0([a,b];H_{\Dir}^1 (\Omega{\setminus}\GC;\R^3)_{\mathrm{weak}}) \quad \text{for all } [a,b]\subset [0,\infty)\,.
\end{equation}
We now improve \eqref{conv-stress-u} to
\begin{equation}
\label{conv-stress-u-new}
\begin{cases}
e^{\eps_k}(\rese u{\eps_k}) \weakto \sfe  & \text{ in } H_\loc^1(0,\infty;L^2(\Omega;\R^{3\times 3})),
\\
e^{\eps_k}(\rese u{\eps_k}(t)) \weakto \sfe (t) & \text{ in } L^2(\Omega;\R^{3\times 3}) \text{ for all } t \in (0,\infty)
\end{cases}
\end{equation}
for some $\sfe  \in  H_\loc^1(0,\infty;L^2(\Omega;\R^{3\times 3})) $ for which 
\eqref{identification-eplan-u} holds.
\par
In this setup, 
 $\sfu$ admits  
the Kirchhoff-Love representation \eqref{KL-decomposition}
with  two functions $\overline \sfu \in H_\loc^1(0,\infty; H_{\Dir}^1(\Omega{\setminus}\GC;\R^2))$ and 
 $\sfu_3 \in H_\loc^1(0,\infty;  H_{\Dir}^2(\omega{\setminus}\gC)) $
(recall that, for Kirchhoff-Love displacements, the component $\sfu_3$ only depends on the variable $x'
\in \omega$).  We observe 
 that 
\begin{equation}
\label{TH2-convs-k-u2}
\resei u{\eps_k}3 \weaksto \sfu_3 \text{ in }  H_\loc^1(0,\infty;H_{\Dir}^2(\Omega{\setminus}\GC))  \cap   W_\loc^{1,\infty} (0,\infty;L^2(\Omega)) \,.
\end{equation}
 Taking into account that $\eps_k \rese u{\eps_k} \to 0 $ in $ L_\loc^\infty(0,\infty; L^2(\Omega)) $,
 we deduce 
 that   
\begin{align}
\label{convs-k-u3}
&
\eps_k \reseid u{\eps_k} i \weaksto 0 \text{ in } L_\loc^\infty(0,\infty; L^2(\Omega)) \quad \text{and}\quad 
 \eps_k^2 \reseid u{\eps_k} i \to 0 \EEE \text{ in } L_\loc^\infty(0,\infty; L^2(\Omega)) 
 \text{ for } i=1,2.
\end{align}
\end{subequations}
\par
The compactness results for  the  sequence   $(\rese z{\eps_k})_k$ reflect the fact that, in the setting of Hypothesis
\ref{Hyp-D}
 the parameters $(\mathrm{b}_{\eps_k})_k$ may in fact converge to $\mathrm{b}=0$. Therefore, in this context  we may only infer 
that there exists  $\sfz\in L_\loc^\infty(0,\infty; L^\infty(\GC)) \cap  \BV_{\loc}(0,\infty;L^1(\GC))$ \EEE such that 
\begin{subequations}
\label{convs-TH2-uz}
\begin{align}
&
\label{TH2-conv-k-z-1}
\rese z{\eps_k} \weaksto  \sfz &&  \text{in } L_\loc^\infty(0,\infty; L^\infty(\GC)),
\\
&
\label{TH2-convs-k-z-5}
  \rese z{\eps_k}(t)\weaksto \sfz(t) && \text{in } L^\infty(\GC) \EEE
  \text{ for all } t \in [0,\infty).
  \end{align}
\end{subequations}
\medskip

\noindent
\textbf{Step $1$: enhanced properties of $\sfu$.} 
We aim to show that 
\begin{equation}
\label{toSHOW-MM-34}
\sfe = \Mextname[\boldsymbol{0}, \epl(\sfu)] 
\qquad \aein \  \Omega\times (0,\infty)
\,. 
\end{equation}
 In turn, since $\sfu \in W_\loc^{1,2}(0,\infty;\  \KLGD)$, \EEE  by Lemma \ref{l:useful-X} the identification  \eqref{toSHOW-MM-34}  will lead to 
\begin{equation}
\label{toSHOW-MM-final}
\sfe(t) = e(\sfu(t))
\qquad \aein\  \Omega \quad  \text{ for all } t   \in [0,\infty) \,.
\end{equation}
\par\noindent
 In order to prove   \eqref{toSHOW-MM-34},
as in Step $1$ of the proof of Theorem 
\ref{mainthm-1}, we take 
the functions $\psi_n^{\eps_k}$ from \eqref{test-psi-added}
as test functions in \eqref{weak-mom-eps}, which we integrate on a generic interval $[s,t]\subset [0,\infty)$. We then take the limit in \eqref{lim-n-fixed}
as $\eps_k \down 0$ with  fixed $n$.
 We now have  the inertial terms to deal with: thanks to 
\eqref{convs-k-u3}
we have 
\[
\begin{aligned}
&
\lim_{k\to\infty}\int_s^t \int_\Omega \left(\eps_k^2\varrho_{\eps_k} \reseidd u{\eps_k}1 \psi^{\eps_k}_{1,n} {+}   \eps_k^2\varrho_{\eps_k} \reseidd u{\eps_k}2  \psi^{\eps_k}_{2,n}  \right) \dd x  \dd r 
\\
&
=
\sum_{i=1}^2 \lim_{k\to\infty} \int_\Omega  \psi^{\eps_k}_{i,n}(x)   [\eps_k^2\varrho_{\eps_k} \reseid u{\eps_k}i(t,x) -\eps_k^2\varrho_{\eps_k} \reseid u{\eps_k}i(s,x) ] \dd x =0\,.
\end{aligned} \qquad \foraa\, s,t \in (0,\infty)  \text{ with } s<t\,.
\]
Analogously, by \eqref{TH2-convs-k-u2} we have 
$\eps_k \reseid u{\eps_k}3 \to \eps_k\dot{\sfu}_3$ in    $L_\loc^{\infty} (0,\infty;L^2(\Omega))$, and thus by the same argument as above  we conclude
\[
\lim_{k\to\infty}\int_s^t \int_\Omega \varrho_{\eps_k} \reseidd u{\eps_k}3 \psi^{\eps_k}_{3,n} \dd x  \dd r  =0 \qquad \foraa\, s,t \in (0,\infty)  \text{ with } s<t\,.
\]
In the same way, we show that terms premultiplied by $\varrho_{\eps_k}$ and involving $\reseidd w{\eps_k}i$, $i=1,2,3$, tend to zero.  
As for  $  \eps_k  \int_s^t  \int_{\Omega{\setminus}\GC}  \mathbb{D}_{\eps_k} e^{\eps_k}(\resed u{\eps_k}) {:} e^{\eps_k}( \psi^{\eps_k}_{n} )  \dd x  \dd r $,  we 
repeat the same arguments as in Step $1$ from Thm.\ \ref{mainthm-1},
with the only difference that, now, thanks to the first of \eqref{conv-stress-u-new} combined with the condition that $\eps_k \bbD_{\eps_k} \to \mathsf{D}$ for some  symmetric
positive definite tensor $\mathsf{D} \in \R^{3{\times}3\times 3\times 3}$, \EEE we have 
\[
\begin{aligned}
\lim_{k\to\infty} \eps_k  \int_s^t  \int_{\Omega{\setminus}\GC}  \mathbb{D}_{\eps_k} e^{\eps_k}(\resed u{\eps_k}) {:} e^{\eps_k}( \psi^{\eps_k}_{n} )  \dd x  \dd r 
= \int_s^t  \int_{\Omega{\setminus}\GC}   \mathsf{D} \dot{\sfe}  {:} 
 \,\begin{pNiceArray}{cw{r}{0.3cm}c}[margin]
\Block{2-2}{ 0 } & &  \zeta^1_n     \\
& &  \zeta^2_n     \\
 \zeta^1_n   &  \zeta^2_n   &  \zeta^3_n  
\end{pNiceArray} \, \EEE
\ell'_n \dd x \dd r
\end{aligned}
\]
in place of \eqref{viscosity-tends-0}. 
Analogously, thanks to \eqref{KL-loadings-needed}
 (which is an outcome of 
 Hypothesis \ref{Hyp-E}),  \EEE
we have
\[
\begin{aligned}
&
\lim_{k\to\infty}\int_s^t  \int_{\Omega{\setminus}\GC}   {\eps_k} \mathbb{D}_{\eps_k} e^{\eps_k}(\resed w{\eps_k}) {:} e^{\eps_k}( \psi^{\eps_k}_{n} )  \dd x 
=  \int_s^t  \int_{\Omega{\setminus}\GC}   \mathsf{D}  e(\dot \sfw)  \EEE{:} 
 \,\begin{pNiceArray}{cw{r}{0.3cm}c}[margin]
\Block{2-2}{ 0 } & &  \zeta^1_n     \\
& &  \zeta^2_n     \\
 \zeta^1_n   &  \zeta^2_n   &  \zeta^3_n  
\end{pNiceArray} \, \EEE
\ell'_n \dd x \dd r\,.
\end{aligned}
\]
All in all,  in place of \eqref{limit-passage-n} we end up with 
\begin{equation}
\label{limit-passage-n-NEW}
\begin{aligned}
\int_s^t \int_{\Omega\setminus\GC}\left\{ \mathsf{D} (\dot\sfe{+} e(\dot \sfw)  \EEE) {+}  \bbC (\sfe{+} e(\sfw)  \EEE) \right\} {:} 
 \,\begin{pNiceArray}{cw{r}{0.3cm}c}[margin]
\Block{2-2}{ 0 } & &  \zeta^1_n     \\
& &  \zeta^2_n     \\
 \zeta^1_n   &  \zeta^2_n   &  \zeta^3_n  
\end{pNiceArray} \, \EEE
\ell'_n \dd x \dd r=0\,, \qquad 
 \foraa\, s,t \in (0,\infty)  \text{ with } s<t
\,,
\end{aligned}
\end{equation}
which again leads to the orthogonality property
\begin{equation}
\label{final&crucial-new}
\left\{ \mathsf{D} (\dot\sfe{+}  e(\dot \sfw)  \EEE) {+}  \bbC (\sfe{+} e(\sfw)  \EEE) \right\}{:} 
 \,\begin{pNiceArray}{cw{r}{0.3cm}c}[margin]
\Block{2-2}{ 0 } & &  \zeta^1    \\
& &  \zeta^2     \\
 \zeta^1   &  \zeta^2   &  \zeta^3
\end{pNiceArray} \, \EEE
=0 \qquad \text{for all } \zzeta=(\zeta^1,\zeta^2,\zeta^3) \in \R^3, \quad \aein\, (0,\infty)\times \Omega\,.
\end{equation}
Therefore, recalling that $\sfw \in W_\loc^{1,2}(0,\infty;\KL)$, $\sfu_0 \in   \KLGD$, and
owing to Definition \ref{def-mextname-new}, we can conclude
\begin{equation}
\label{joint-identification-sfe-dotsfe}
\sfe+e(\sfw) = \Mextname [ \boldsymbol{0},\epl(\sfu)+\epl(\sfw)]
\qquad 
\aein \  \Omega\times (0,\infty)\,.
\end{equation}
 Now, 
by Lemma \ref{l:useful-X} we have that $\Mextname [\boldsymbol{0}, \epl(\sfw)] = e(\sfw)$. \EEE
Therefore, from \eqref{joint-identification-sfe-dotsfe} we gather 
\eqref{toSHOW-MM-34} and, ultimately, 
\eqref{toSHOW-MM-final}
 follows. \EEE
\medskip

\noindent
\textbf{Step $2$: limit passage in the weak momentum balance.}
We test \eqref{weak-mom-eps}
by  a function $\varphi$ in the space $ \mathfrak{V}$ from
 \eqref{mom-bal-DR-34}. 
 We  integrate over a generic interval $[0,\tau]\subset [0,\infty)$
and then over  $[t{-}\delta,t{+}\delta]$ for fixed $\delta>0$:  the double integration in time is motivated by the lack of pointwise-in-time convergence
for $( \resed u{\eps_k})_k$.
\par
First of all, we tackle the limit as $k\to
\infty$ of the inertial terms: integrating by parts in the inner time integral,  we have 
for $i=1,2$
 \begin{subequations}
 \begin{align}
 &
   \label{lim-pass-inert-terms-1-NEW}
 \begin{aligned}
\int_{t{-}\delta}^{t{+}\delta}
\int_0^\tau \int_\Omega  \eps_k^2\varrho_{\eps_k} \reseidd u{\eps_k} i \varphi_i \dd x \dd r \dd 
\tau 
& = -\int_{t{-}\delta}^{t{+}\delta}
\int_0^\tau  \int_\Omega  \eps_k^2\varrho_{\eps_k} \reseid u{\eps_k} i \dot{\varphi}_i \dd x \dd r \dd \tau
\\
&
\quad
+\int_{t{-}\delta}^{t{+}\delta}  \int_\Omega  \left[ \eps_k^2\varrho_{\eps_k} \reseid u{\eps_k} i(\tau) \varphi_1(\tau)  {-}    \eps_k^2\varrho_{\eps_k} \reseid u{\eps_k} i(0) \varphi_i(0) \right]  \dd x  \dd \tau 
  \longrightarrow 0
  \end{aligned}
\intertext{thanks to \eqref{convs-k-u3},
 whereas we have }
&
\begin{aligned}
&
 \int_{t{-}\delta}^{t{+}\delta}
\int_0^\tau \int_\Omega \varrho_{\eps_k}  \reseidd u{\eps_k}3 \varphi_3 \dd x \dd r   \dd \tau
\\
&= - \int_{t{-}\delta}^{t{+}\delta}
\int_0^\tau \int_\Omega \varrho_{\eps_k}  \reseid u{\eps_k}3 \dot{\varphi}_3 \dd x \dd r  \dd \tau +
\int_{t{-}\delta}^{t{+}\delta} \int_\Omega \left[ \varrho_{\eps_k}  \reseid u{\eps_k}3(\tau) \varphi_3(\tau) \dd x {-}   \varrho_{\eps_k}  \reseid u{\eps_k}3(0) \varphi_3(0) 
\right] \dd x \dd \tau
\\
& \longrightarrow - \int_{t{-}\delta}^{t{+}\delta}
\int_0^\tau \int_\omega \varrho  \reseid u{}3 \dot{\varphi}_3 \dd x' \dd r  \dd \tau
+  \int_{t{-}\delta}^{t{+}\delta} \int_\omega \left[ \varrho  \reseid u{} 3(\tau) \varphi_3(\tau) {-}   \varrho  \reseid u{}3(0) \varphi_3 (0)\right] \dd x' \dd \tau
\end{aligned}
\intertext{by \eqref{TH2-convs-k-u2},  where the latter integrals are considered over $\omega$ is due to the fact that $\sfu_3$ and $\varphi_3$ only depend on the variable $x' \in \omega$.   By Hypothesis \ref{Hyp-E}} 
&
\label{lim-pass-inert-terms-3}
\begin{cases}
\displaystyle -  \int_{t{-}\delta}^{t{+}\delta}
\int_0^\tau \int_\Omega \left(\eps_k^2\varrho_{\eps_k} \reseidd w{\eps_k}1(r) \varphi_1 {+}   \eps_k^2\varrho_{\eps_k} \reseidd w{\eps_k}2(r) \varphi_2 \right) \dd x  \dd r \dd \tau   \longrightarrow 0, 
\\
\displaystyle  -  \int_{t{-}\delta}^{t{+}\delta}
\int_0^\tau  \int_\Omega \varrho_{\eps_k}  \reseidd w{\eps_k}3(r) \varphi_3 \dd x \dd r \dd \tau  \longrightarrow -   \int_{t{-}\delta}^{t{+}\delta}
\int_0^\tau  \int_\omega  \varrho \reseidd w{}3(r) \varphi_3 \dd x' \dd r \dd \tau
\end{cases}
\end{align}
\end{subequations}
(note that, here, by-part integration is not needed due to the enhanced time regularity of $(\rese w{\eps_k})_k$. 
Relying on condition  \eqref{convergences-f} for $( \rese f{\eps_k})_k$, 
we  also obtain  the analogue of \eqref{lim-pass-inert-terms-f-phi}, i.e.
\[
 \int_{t{-}\delta}^{t{+}\delta}
\int_0^\tau\int_\Omega \rese f{\eps_k}(r)\varphi \dd x \dd r \dd \tau
 \longrightarrow  \int_{t{-}\delta}^{t{+}\delta}
\int_0^\tau \int_\Omega \sff(r)\varphi \dd x \dd r  \dd \tau\,. \EEE
 \]
The limit passage in the viscosity term now works differently: due to 
\eqref{hypD-2},   \eqref{KL-loadings-needed}, and the fact that $   e^{\eps_k}(\resed u{\eps_k}) \weakto \dot \sfe    = e(\dot \sfu)$ \EEE
 in   $L_\loc^2(0,\infty;L^2(\Omega;\R^{3\times 3})) $ by \eqref{conv-stress-u-new}, we have 
\[
\begin{aligned}
&\lim_{k\to\infty}   \int_{t{-}\delta}^{t{+}\delta}
\int_0^\tau \int_{\Omega{\setminus}\GC}   {\eps_k} \mathbb{D}_{\eps_k} \big( e^{\eps_k}(\resed u{\eps_k}(r)){+}e^{\eps_k}(\resed w{\eps_k}(r)) \big)
{:} e^{\eps_k}(\varphi) \dd x \dd  r \dd \tau
\\
& \stackrel{(1)}{=}   \int_{t{-}\delta}^{t{+}\delta}
\int_0^\tau \int_{\Omega{\setminus}\GC}  \mathsf{D} (e(\dot \sfu){+}  e (\dot \sfw))  \EEE {:}   \mix{\epl(\varphi)}{\boldsymbol{0}}  \dd x \dd  r  \dd \tau
\\
& \stackrel{(2)}{=}    \int_{t{-}\delta}^{t{+}\delta}
\int_0^\tau \int_{\Omega{\setminus}\GC}   \Dr (\epl(\dot \sfu){+} \epl(\dot \sfw)){:} \epl(\varphi)  \EEE  \dd x \dd  r \dd \tau\,, 
\end{aligned}
\]
where for {\footnotesize (1)} we have used that $ e^{\eps_k}(\varphi) = \mix{\epl(\varphi)}{\boldsymbol{0}}  $ since $\varphi \in  \KLGD $, \EEE   while    {\footnotesize (2)}
follows from the fact that $e(\dot \sfx) =  \mix{\epl(\dot \sfx)}{\boldsymbol{0}}  $ for $\sfx \in \{\sfu, \sfw\}$. \EEE
Analogously, we have 
\[
\begin{aligned}
&
\lim_{k\to \infty}  \int_{t{-}\delta}^{t{+}\delta}
\int_0^\tau \int_{\Omega{\setminus}\GC}   \bbC e^{\eps_k}(\rese u{\eps_k} (r){+}\rese w{\eps_k} (r))  {:} e^{\eps_k}(\varphi) \dd r \dd x \dd \tau
\\
&
 =    \int_{t{-}\delta}^{t{+}\delta}
\int_0^\tau \int_{\Omega{\setminus}\GC} \Cr (\epl (\sfu){+} \epl(\sfw)) {:}  \epl(\varphi) \dd x \dd  r \dd \tau \,.\EEE
\end{aligned}
\]
Furthermore, since the 
the mapping $\alpha_\lambda:\R^3 \to \R^3$ is Lipschitz continuous, relying on the fact that 
$
\JUMP{\rese u{\eps_k}} \to \JUMP{\sfu} $   in $ \mathrm{C}^0([a,b]; L^{4-\rho}(\GC;\R^3))$  for all $0<\rho<4$ and all $[a,b]\subset [0,\infty)$ thanks to 
\eqref{TH2convs-k-u1-bis},
we conclude that 
\[
\begin{aligned}
&
\nu_{\eps_k}  \int_{t{-}\delta}^{t{+}\delta}
\int_0^\tau  \int_{\GC}\alpha_\lambda(\JUMP{\resei u{\eps_k}1, \resei u{\eps_k}2, 0})\cdot \JUMP{\varphi_1,\varphi_2,0} \dd \Surf(x) \dd r \dd \tau
\\
&
= \nu  \int_{t{-}\delta}^{t{+}\delta}
\int_0^\tau  \int_{\GC}\alpha_\lambda(\JUMP{\resei u{}1, \resei u{}2, 0})\cdot \JUMP{\varphi_1,\varphi_2,0} \dd \Surf(x) \dd r \dd \tau \,.
\end{aligned}
\]
Finally,   combining \eqref{TH2convs-k-u1-bis} and
\eqref{convs-TH2-uz}
 we again have  convergence
\eqref{lim-pass-adh-terms}, now for the integrals 
\[
  \int_{t{-}\delta}^{t{+}\delta}
\int_0^\tau  \int_{\GC} \kappa\rese z{\eps_k}\JUMP{\rese u{\eps_k}}\JUMP{\varphi}\,\mathrm{d}\Surf (x) \dd r \dd \tau  \longrightarrow 
  \int_{t{-}\delta}^{t{+}\delta}
\int_0^\tau  \int_{\GC} \kappa\rese z{}\JUMP{\rese u{}}\JUMP{\varphi}\,\mathrm{d}\Surf (x) \dd r 
\dd \tau\,.
\]
 \EEE

\par
 All in all, we obtain the   \emph{integrated momentum balance}
\[
\begin{aligned}
&
- \int_{t{-}\delta}^{t{+}\delta}
\int_0^\tau \int_\omega \varrho  \reseid u{}3 \dot{\varphi}_3 \dd x' \dd r  \dd \tau
+  \int_{t{-}\delta}^{t{+}\delta} \int_\omega \left[ \varrho  \reseid u{} 3(\tau) \varphi_3(\tau) {-}   \varrho  \reseid u{}3(0) \varphi_3 (0)\right] \dd x' \dd \tau
\\
& \quad +
   \int_{t{-}\delta}^{t{+}\delta}
\int_0^\tau
\int_{\Omega{\setminus}\GC}   \Dr \epl (\dot \sfu){{:}} \epl(\varphi)     \dd x  \dd r \dd \tau
+  \int_{t{-}\delta}^{t{+}\delta}
\int_0^\tau
\int_{\Omega{\setminus}\GC}     \Cr \epl (\sfu){{:}} \epl(\varphi)     \dd x  \dd r  \dd \tau
\\
& \quad 
+\nu  \int_{t{-}\delta}^{t{+}\delta}
\int_0^\tau  \int_{\GC}\alpha_\lambda(\JUMP{\resei u{}1, \resei u{}2, 0})\cdot \JUMP{\varphi_1,\varphi_2,0} \dd \Surf(x) \dd r \dd \tau
+ \int_{t{-}\delta}^{t{+}\delta}
\int_0^\tau\int_{\GC} \kappa\rese z{}\JUMP{\rese u{}}\JUMP{\varphi}\,\mathrm{d}\Surf (x) \dd r \dd \tau
\\
&
=  \int_{t{-}\delta}^{t{+}\delta}
\int_0^\tau \int_\Omega \rese f{}\varphi \dd x  \dd r \dd \tau
-  \int_{t{-}\delta}^{t{+}\delta}
\int_0^\tau \int_{\Omega\setminus \GC}    \Dr \epl (\dot \sfw){{:}} \epl(\varphi)   \dd x \dd r \dd \tau
\\
& \quad 
-  \int_{t{-}\delta}^{t{+}\delta}
\int_0^\tau \int_{\Omega\setminus \GC}    \Cr \epl (\sfw){{:}} \epl(\varphi)   \dd x \dd r \dd \tau
-
 \int_{t{-}\delta}^{t{+}\delta}
\int_0^\tau  \int_\omega \varrho \reseidd w{}3 \varphi_3 \dd x'  \dd r \dd \tau  \qquad \text{for all } \varphi \in  \mathfrak{V}\,.
\end{aligned}
\]
Dividing by $2\delta$ and letting $\delta \down 0$, by a Lebesgue point argument we obtain
 \eqref{mom-bal-DR-34}.  \EEE
\medskip

\medskip

\noindent
\textbf{Step $3$: limit passage in the semistability condition.} It is immediate to check that the semistability condition \eqref{semistab-DR-34} reduces to \eqref{true-semistability}
in this case as well. Therefore, the very same arguments as in the proof of Theorem 
\ref{mainthm-1} yield \eqref{semistab-DR-34}, which now holds for every $t\in [0,T]$ thanks to the improved convergences \eqref{TH2convs-k-u1}--\eqref{TH2convs-k-u1-bis}. 
\medskip

\noindent
\textbf{Step $4$: limit passage in the energy-dissipation inequality.}   We tackle the  passage to the limit in  
\eqref{EDB-eps}.  Just like in Step $2$, to compensate the lack of pointwise convergence of
$(\resed u{\eps_k})$ we will need to perform a further integration in time. Thus, we consider \eqref{EDB-eps}
 on the generic interval $[0,\tau]$
 and then integrate for $\tau \in [t{-}\delta, t{+}\delta]$. 
 We have 
\begin{subequations}
\begin{equation}
\label{liminf-inequality-inertia}
\liminf_{k\to\infty}   \frac{\varrho_{\eps_k}}{2}  
 \int_{t{-}\delta}^{t{+}\delta}
\int_\Omega  |\reseid u{\eps_k}3|^2  \dd x \dd \tau  \geq   
 \frac{\varrho}{2}  \int_{t{-}\delta}^{t{+}\delta}\int_\omega   |\reseid u{}3|^2  \dd x' \dd \tau \,. 
\end{equation}
\end{subequations}\EEE 
    Let us  now  revisit Step $5$ in the proof of Theorem 
\ref{mainthm-1}.
  As for the limit passage in the bulk energy,  it suffices to observe that 
\begin{equation}
\label{thm4-3:liminf-energia-t}
\begin{aligned}
\liminf_{k\to\infty}
 \int_{t{-}\delta}^{t{+}\delta}  \int_{\Omega{\setminus}\GC}\tfrac{1}{2}\mathbb{C}e^{\eps_k}(\rese u{\eps_k}){:} e^{\eps_k}(\rese u{\eps_k}) \dd x   \dd \tau 
 & \stackrel{(1)}{\geq}   \int_{t{-}\delta}^{t{+}\delta}  \int_{\Omega{\setminus}\GC}\tfrac{1}{2}\mathbb{C}(\sfe){:} \sfe \dd x  \dd \tau
 \\
 &  \stackrel{(2)}{=}  
 \int_{t{-}\delta}^{t{+}\delta}   \int_{\Omega{\setminus}\GC}\tfrac{1}{2}  \Cr\epl(\sfu){:}\epl(\sfu)  \EEE \dd x \dd \tau
 \end{aligned}
\end{equation}
where  {\footnotesize (1)} follows from the second of \eqref{conv-stress-u-new},  while  {\footnotesize (2)} ensues from \eqref{toSHOW-MM-final}. \EEE
%
Hence, 
 also relying on Hypotheses \ref{hyp:data} and \ref{Hyp-E} for $(\rese f{\eps_k})_k$
 and  $(\rese w{\eps_k})_k$, \EEE
 we have that 
\begin{equation}
\label{liminf-inequality-bulk-energy-43}
\lim_{k\to\infty}   \int_{t{-}\delta}^{t{+}\delta} 
\resei{E}{\eps_k}{\mathrm{bulk}}(\tau, \rese u{\eps_k}(\tau))   \dd \tau \EEE 
\geq  \int_{t{-}\delta}^{t{+}\delta}     \Eve^{\mathrm{bulk}}(\tau , \rese u{}(\tau)) \dd \tau \,.
\end{equation}
We easily check that 
\begin{equation}
\label{liminf-inequality-surface-energy-43}
\liminf_{k\to\infty} \int_{t{-}\delta}^{t{+}\delta}   \resei{E}{\eps_k}{\mathrm{surf}}(\rese u{\eps_k}(\tau),  \rese z{\eps_k}(\tau)) \dd \tau \geq 
 \int_{t{-}\delta}^{t{+}\delta}   \Eve^{\mathrm{surf}}(\rese u{}(\tau), \rese z{}(\tau)) \dd \tau \,. 
\end{equation}
Clearly, we again have  \eqref{liminf-inequality-R-Var},  now integrated over the interval $[t{-}\delta,t{+}\delta]$. \EEE
We now discuss the limit of the  energy dissipated by viscosity:  by \eqref{conv-stress-u-new} and  \eqref{toSHOW-MM-final} \EEE we have 
\begin{equation}
\label{thm4-3:liminf-dissip}
\begin{aligned}
\liminf_{k\to\infty}  \eps_k \int_{t{-}\delta}^{t{+}\delta}  \int_0^\tau 
\int_{\Omega\setminus \GC} \bbD_{\eps_k} e^{\eps_k} (\resed u{\eps_k}) {:} e^{\eps_k}(\resed u{\eps_k}) \dd x  \dd r \dd \tau 
 & \geq  \int_{t{-}\delta}^{t{+}\delta}  \int_0^\tau  \int_{\Omega\setminus \GC} \mathsf{D} \dot{\sfe} {:} \dot{\sfe} \dd x  \dd r \dd \tau
\\
&=   \int_{t{-}\delta}^{t{+}\delta}  \int_0^\tau  \int_{\Omega\setminus \GC}   \Dr \epl(\dot \sfu){:} \epl(\dot \sfu)   \EEE \dd x  \dd r \dd \tau \,.
\end{aligned}
\end{equation}
As for the right-hand side, we rely on
 \eqref{convergences-initial-data-34} \EEE
for   the energy convergence \EEE $\rese  E{\eps_k}(0,\rese u{\eps_k}_0, \rese z{\eps_k}_0)\to   \Eve (0,\sfu_0, \GIO  \sfz_0 \EEE ) $.
Finally, we have
\begin{equation}
\label{liminf-inequality-power-34}
\begin{aligned}
&
\lim_{k\to\infty}  \int_{t{-}\delta}^{t{+}\delta}  \int_0^\tau   \partial_t  \rese{E}{\eps_k}(r, \rese u{\eps_k}(r), \rese z{\eps_k}(r)) \dd r  \dd \tau 
\\
&
=  \lim_{k\to\infty}  \int_{t{-}\delta}^{t{+}\delta}  \int_0^\tau   \Big( {-}\int_{\Omega} \resed f{\eps_k} \rese u{{\eps_k}} \dd x {+}
 \int_{\Omega{\setminus}\GC} \bbC e^{\eps_k}(\resed w{\eps_k}){:} e^{\eps_k}(\rese u{\eps_k}) \dd x
  {+}\eps_k\int_{\Omega{\setminus}\GC} \bbD_{\eps_k} e^{\eps_k}(\resedd w{\eps_k}){:} e^{\eps_k}(\rese u{\eps_k}) \dd x 
\\
& \qquad \qquad   \qquad   \qquad 
{+}\eps_k^2 \int_\Omega   \sum_{i=1}^2 \varrho_{\eps_k}\reseiddd w{\eps_k} i  \rese u{\eps_k}_{i}  \dd x
{+}\int_\Omega \varrho_{\eps_k}   \reseiddd w{\eps_k} 3  \rese u{\eps_k}_3 \dd x \Big) \dd r  \dd \tau 
\\
& \stackrel{(1)}=  \int_{t{-}\delta}^{t{+}\delta}  \int_0^\tau \Big( {-}\int_{\Omega} \resed f{} \rese u{} \dd x {+}
 \int_{\Omega{\setminus}\GC} 
 \{\Cr \epl(\dot\sfw) {+} \Dr \epl(\ddot \sfw) \} \EEE {:} \epl(\rese u{}) \dd x 
{+}   \int_\omega \varrho\reseiddd w{} 3  \rese u{}_3 \dd x'  \Big) \dd r \dd \tau
\\
& =  \int_{t{-}\delta}^{t{+}\delta}  \int_0^\tau \partial_t  \rese{E}{}(r, \rese u{}(r), \rese z{}(r)) \dd r \dd \tau \,.
\end{aligned}
\end{equation}
For {\footnotesize (1)}, in addition to the arguments for \eqref{liminf-inequality-power} from Step $5$ in the proof of Thm.\ \ref{mainthm-1}, we have used 
 \eqref{KL-loadings-needed} and the fact that  $ e^{\eps_k}(\rese u{\eps_k}) \weakto e(\dot \sfu) $ in   $L_\loc^2(0,\infty;L^2(\Omega;\R^{3\times3}))$  \EEE
thanks to  \eqref{conv-stress-u-new} and \eqref{toSHOW-MM-final}.  \EEE 
 Hence, we conclude the validity of the energy-dissipation inequality
\begin{equation}
\label{EDI-ineq-43}
\begin{aligned}
 & 
  \int_{t{-}\delta}^{t{+}\delta}   \int_\omega \frac{\varrho}{2}   |\reseid u{}3 (\tau)|^2  \dd x' \dd \tau 
+  \int_{t{-}\delta}^{t{+}\delta}   \int_0^\tau  \int_{\Omega{\setminus}\GC} \Dr \epl(\resed u{})  {:} \epl(\resed u{})
\dd x  \dd r \dd \tau 
 +   \int_{t{-}\delta}^{t{+}\delta} \Var_{\mathsf{R}}(\sfz, [0,\tau]) \dd \tau 
 \\
 & \quad 
+  \int_{t{-}\delta}^{t{+}\delta} \Eve(\tau,\sfu(\tau),\sfz(\tau))  \dd \tau 
\\
&   \leq 
\int_{t{-}\delta}^{t{+}\delta} \left[ \int_\omega \frac{\varrho}{2}  |\reseid u{}{3} (0)|^2  \dd x' {+}   \Eve(0,\sfu(0),\sfz(0))  \right]  \dd \tau  
+ \int_{t{-}\delta}^{t{+}\delta}   \int_0^\tau  \partial_t
  \Eve(r,\sfu(r),\sfz(r)) \,  \mathrm{d}r \dd \tau\,.
 \end{aligned}
\end{equation}
Again, we divide by $2\delta$ and let $\delta \down 0$, thus concluding the energy-dissipation inequality on
the interval 
 $[0,t]$, for almost all $t\in (0,\infty)$.  \EEE
\medskip
\par
This finishes the proof.
\QED


 \section*{Acknowledgements}
E.D.\ acknowledges the support of the Austrian Science Fund (FWF) through grants  10.55776/V662, 10.55776/F65, 10.55776/Y1292, as well as 10.55776/P35359. \EEE G.B.\ and R.R.\ acknowledge the support 
of GNAMPA (INDAM).  R.R.\ was also supported by the  PRIN project \emph{PRIN 2020: ``Mathematics for Industry 4.0"}. 
\par The authors would also like to 
thank the two anonymous referees for several very stimulating and helpful suggestions, which  have led to improvements in the results and in their presentation.
\par
For open access purposes, the authors have applied a CC BY public copyright license to any author-accepted manuscript version arising from this submission.\EEE
{\small 
\bibliographystyle{abbrv}
\bibliography{BDR_lit}
}

\end{document}